\documentclass[reqno]{amsart}
\usepackage[bookmarks=false]{hyperref} 
\usepackage{amssymb}
\usepackage{amsmath}
\usepackage{amsthm} 
\usepackage{mathtools}
\usepackage{color} 
\usepackage[svgnames]{xcolor}
\usepackage[utf8]{inputenc}
\usepackage[T1]{fontenc}
\usepackage{graphicx}
\usepackage{placeins}
\usepackage{vmargin}
\usepackage{setspace}

\hypersetup{
  colorlinks=true,
  linkcolor=Blue  ,          % color of internal links (change box color with linkbordercolor)
  citecolor=Red,        % color of links to bibliography
  filecolor=Magenta,      % color of file links
  urlcolor=Green,           % color of external links
pdfpagemode=UseOutlines,
pdftitle={},
pdfauthor={Tin Van Phan <van-tin.phan@univ-tlse3.fr>},
pdfsubject={On the Cauchy problem for
  a derivative nonlinear Schrodinger equation with nonvanishing boundary conditions},
pdfkeywords={Cauchy problem,derivative nonlinear Schr\"odinger equation,nonvanishing boundary conditions} 
}

\numberwithin{equation}{section}
\numberwithin{figure}{section}

\newtheorem{theorem}{Theorem}[section]
\newtheorem{proposition}[theorem]{Proposition}
\newtheorem{lemma}[theorem]{Lemma}

\newtheorem*{notation}{Notation}

\theoremstyle{definition}
\newtheorem{definition}[theorem]{Definition}

\theoremstyle{remark}
\newtheorem{remark}[theorem]{Remark}

%%%%%%%%%
% norms %
%%%%%%%%%
\DeclarePairedDelimiter{\norm}{\lVert}{\rVert}

%%%%%%%%%%%%%%%%%%
% scalar product %
%%%%%%%%%%%%%%%%%%

%%%%%%%%%%%%%%%%%%%
% duality product %
%%%%%%%%%%%%%%%%%%%

%%%%%%%%%%%%%%%%%
% Greek letters %
%%%%%%%%%%%%%%%%%
\newcommand{\eps}{\varepsilon}

%%%%%%%%%%%%%%%
% number sets %
%%%%%%%%%%%%%%%

\newcommand{\R}{\mathbb{R}}

%%%%%%%%%%%%%%%%%%%%%%%%%%%%%%%%%%%%%%%%%%
% other commands, miscellaneous notations %
%%%%%%%%%%%%%%%%%%%%%%%%%%%%%%%%%%%%%%%%%%

% For nice 'less or equal' signs
\renewcommand{\leq}{\leqslant}
\renewcommand{\geq}{\geqslant}

% For better real and imaginary part.
\DeclareMathAlphabet{\mathpzc}{OT1}{pzc}{m}{it}
\renewcommand{\Re}{\mathcal R\!\mathpzc{e}}
\renewcommand{\Im}{\mathcal I\!\mathpzc{m}}

% \newcommand{\cmt}[1]{{\mbox{}\color{red} #1}}
% \newcommand{\cmtm}[1]{{\mbox{}\color{magenta} #1}}
% \newcommand{\Tnote}[1]{\medskip \noindent{\color[rgb]{.70,.12,.20} 
%   \fbox{\parbox{\linewidth}{\colorbox[rgb]{.7,1,.7}{Tai-Peng:} #1}} } \medskip }

% \definecolor{britishracinggreen}{rgb}{0.0, 0.26, 0.15}
% \definecolor{ao(english)}{rgb}{0.0, 0.5, 0.0}
% \newcommand{\sg}[1]{{\mbox{}\color{ao(english)} #1}}

%%%%%%%%%%%%%%%%%%%%%%%%%%%%%%%%%%%%%%%%%%%%%%%%%%
%%%%%%%%%%%%%%%%%----------------%%%%%%%%%%%%%%%%%
%%%%%%%%%%%%%%%--   text start   --%%%%%%%%%%%%%%%
%%%%%%%%%%%%%%%%%----------------%%%%%%%%%%%%%%%%%
%%%%%%%%%%%%%%%%%%%%%%%%%%%%%%%%%%%%%%%%%%%%%%%%%%

\begin{document}

\title[On the Cauchy problem for derivative NLS]{On the Cauchy problem for
  a derivative nonlinear Schr\"odinger equation with nonvanishing boundary conditions}

\author[Phan Van Tin]{Phan Van Tin}

\address[Phan Van Tin]{Institut de Math\'ematiques de Toulouse ; UMR5219,
  \newline\indent
  Universit\'e de Toulouse ; CNRS,
  \newline\indent
  UPS IMT, F-31062 Toulouse Cedex 9,
  \newline\indent
  France}
\email[Phan Van Tin]{van-tin.phan@univ-tlse3.fr}

\subjclass[2020]{35Q55; 35A01}

\date{\today}
\keywords{Nonlinear derivative Schr\"odinger equations, Cauchy problem, Non vanishing boundary condition}

\begin{abstract} In this paper we consider the Schr\"odinger equation with nonlinear derivative term. Our goal is to initiate the study of this equation with non vanishing boundary conditions. We obtain the local well posedness for the Cauchy problem on Zhidkov spaces $X^k(\R)$ and in $\phi+H^k(\R)$. Moreover, we prove the existence of conservation laws by using localizing functions. Finally, we give explicit formulas for stationary solutions on Zhidkov spaces.
\end{abstract}

\maketitle
\tableofcontents

\section{Introduction}

We are interested in the Cauchy problem for the
following  derivative nonlinear Sch\"odinger equation  with nonvanishing boundary conditions:
\begin{equation}
\label{eq:1}
\begin{cases}
i\partial_t u+\partial^2 u = -i u^2\partial\overline{u} ,\\
u(0) = u_0,
\end{cases}
\end{equation}
where $u:\mathbb R_t\times \mathbb R_x\to \mathbb C$, $\partial=\partial_x$ denotes derivative in space and $\partial_t$ denotes derivative in time.

Our attention was drawn to this equation by the work of Hayashi and
Ozawa \cite{HaOz92} concerning the more general nonlinear Schr\"odinger equation
\begin{equation}
\label{eq:general}
\begin{cases}
i\partial_t u + \partial^2 u = i \lambda |u|^2\partial u +  i \mu u^2 \partial\overline{u} + f(u) ,\\
u(0) = u_0. 
\end{cases}
\end{equation}
When $\lambda = 0$, $\mu = -1$, $f \equiv 0$, then \eqref{eq:general}
reduces to \eqref{eq:1}.  This type of equation is usually refered to
as \emph{derivative nonlinear Schr\"odinger equations}. It may appear
in various areas of physics, e.g. in Plasma Physics for the
propagation of Alfv\'en waves \cite{Mj76,SuSu99}.

Under Dirichlet boundary conditions in space, the Cauchy problem for \eqref{eq:1} has been solved in \cite{HaOz92}: local well-posedness holds in $H^1(\R)$, i.e. for any $u_0\in H^1(\R)$ there exists a unique solution $u\in C(I,H^1(\R))$ of \eqref{eq:1} on a maximal interval of time $I$. Moreover, we have continuous dependence with respect to the initial data, blow-up at the ends of the time interval of existence $I$ if $I$ is bounded and conservation of energy, mass and momentum. 

The main difficulty is the appearance of the derivative term $-iu^2\overline{u_x}$. We cannot use the classical contraction method for this type of nonlinear Schr\"odinger equations. In \cite{HaOz92} Hayashi and Ozawa use the Gauge transform to establish the equivalence of the local  well-posedness between the equation \eqref{eq:general} and a system of equations without derivative terms. By studying the Cauchy problem for this system, they obtain the associated results for \eqref{eq:general}. In \cite{HaOz16}, Hayashi and Ozawa construct a sequence of solutions of approximated equations and prove that this sequence is converging to a solution of \eqref{eq:general}, obtaining this way the local well-posedness of \eqref{eq:general}. The approximation method has also been used by Tsutsumi and Fukuda in \cite{TsFu80,TsFu81}. The difference between \cite{HaOz16} and \cite{TsFu80,TsFu81} lies in the way of constructing the approximate equation. In \cite{HaOz16}, the authors use approximation on the non-linear term, whereas in \cite{TsFu80, TsFu81} the authors use approximation on the linear operator.

To our knowledge, the Cauchy  problem for \eqref{eq:1} has not been studied under non-zero boundary conditions, and our goal in this paper is to initiate this study. Note that non-zero boundary conditions on the whole space are much rarely considered in the literature around nonlinear dispersive equations than Dirichlet boundary conditions. In the case of the nonlinear Schr\"odinger equation with power-type nonlinearity, we refer to the works of G\'erard \cite{Ge06,Ge08} for local well-posedness in the energy space and to the works of Gallo \cite{Ga04} and Zhidkov \cite{Zh01} for local well-posedness in Zhidkov spaces (see Section \ref{sec:preliminaries} for the definition of Zhidkov spaces) and Gallo \cite{Ga08} for local well-posedness in $u_0+H^1(\R)$. In this paper, using the method of Hayashi and Ozawa as in \cite{HaOz92} on the Zhidkov-space  $X^k(\R)$, ($k \geq 4$) and in the space $\phi+H^k(\R)$ ($k=1,2$) for $\phi$ in a Zhidkov space, we obtain the existence, uniqueness and continuous dependence on the initial data of solutions of \eqref{eq:1} in these spaces. Using the transform 
\begin{equation}\label{eq:transform}
v=\partial u + \frac{i}{2} |u|^2u,
\end{equation}
we see that if $u$ is a solution of \eqref{eq:1} then $(u,v)$ is a solution of a system of two equations without derivative terms. It is easy to obtain the local wellposedness of this system on Zhidkov spaces. The main difficulty is how to obtain a solution of \eqref{eq:1} from a solution of the system. Actually, we must prove that the relation \eqref{eq:transform} is conserved in time. The main difference in our setting with the setting in \cite{HaOz94} is that we work on Zhidkov spaces instead of the space of localized functions $H^1(\R)$. Our first main result is the following.
\begin{theorem}\label{local well posed on Zhidkov space}
  Let $u_0 \in X^4(\R)$. Then there exists a unique maximal solution of \eqref{eq:1} $u \in C((-T_{min},T^{max}),X^4(\mathbb{R})) \cap C^1((-T_{min},T^{max}), X^2(\mathbb{R}))$. Moreover, $u$ satisfies the two following properties.
  \begin{itemize}
\item \emph{Blow-up alternative.} If $T^{max}$ (resp. $T_{min}$)$< +\infty$ then
  \[
    \lim_{t \to T^{max} \text{(resp. } -T_{min}\text{)}} \norm{u(t)}_{X^4} = \infty.
  \]
 \item   \emph{Continuity with respect to the initial data.}  If $u^n_0 \in X^4(\R)$ is such that $u^n_0 \to u_0$ in $X^4(\R)$ then for any subinterval $[T_1,T_2] \subset (-T_{min},T^{max})$ the associated solutions of equation \eqref{eq:1} $(u^n)$ verify 
    \[
      \lim_{n\to\infty}\norm{u^n-u}_{L^{\infty}([T_1,T_2],X^4)} =0.
    \]
  \end{itemize}
\end{theorem}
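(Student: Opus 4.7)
The plan, following the strategy sketched by the author around \eqref{eq:transform}, is to replace \eqref{eq:1} by a system with derivative-free nonlinearities via the substitution $v:=\partial u+\frac{i}{2}|u|^2u$, solve that system in a Zhidkov setting by a standard contraction argument, and then verify that the defining relation for $v$ is preserved by the flow, so that $u$ indeed solves \eqref{eq:1}.

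\textbf{Step 1: derivation of the system.} A formal computation on a smooth solution of \eqref{eq:1} yields
\begin{equation*}
i\partial_t v+\partial^2 v=P(u,\overline u)\,v+Q(u,\overline u)\,\overline v+R(u,\overline u),
\end{equation*}
where $P,Q,R$ are polynomials in $u,\overline u$ with no derivative involved. Rewriting the derivative nonlinearity in \eqref{eq:1} as $-iu^2\,\overline{v-\tfrac{i}{2}|u|^2u}$ also eliminates derivatives on the right-hand side, and one obtains a coupled system $(\mathcal S)$ for $(u,v)$ whose nonlinearity is polynomial in $u,v$ and their conjugates.

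\textbf{Step 2: local well-posedness of $(\mathcal S)$.} Working in $X^4(\R)\times X^3(\R)$, I would apply Banach's fixed point theorem to the Duhamel formulation
\begin{equation*}
u(t)=e^{it\partial^2}u_0-i\int_0^t e^{i(t-s)\partial^2}N_1(u,v)(s)\,ds,
\end{equation*}
and the analogous integral equation for $v$. This uses two facts that should already be recorded in the preliminary section: the free Schr\"odinger group acts continuously on $X^k(\R)$, and $X^k(\R)$ is a Banach algebra for $k\geq 1$. Local existence and uniqueness of $(u,v)$ in $C(I;X^4)\times C(I;X^3)$, together with a blow-up alternative and continuity in the initial data, follow from the standard contraction argument since $P,Q,R$ are locally Lipschitz between the relevant Zhidkov spaces.

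\textbf{Step 3: recovering \eqref{eq:1}.} This is the delicate point. Given $u_0\in X^4$, set $v_0:=\partial u_0+\tfrac{i}{2}|u_0|^2u_0\in X^3$, solve $(\mathcal S)$ to obtain $(u,v)$, and define the defect
\begin{equation*}
w:=v-\partial u-\tfrac{i}{2}|u|^2u.
\end{equation*}
A direct (though algebraically patient) computation using both equations of $(\mathcal S)$ should produce a linear Schr\"odinger-type equation
\begin{equation*}
i\partial_t w+\partial^2 w=a(t,x)\,w+b(t,x)\,\overline w,
\end{equation*}
with coefficients $a,b\in L^\infty_{\mathrm{loc}}(I;L^\infty(\R))$ depending polynomially on $u$. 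Since $w(0)=0$, an $L^2$ energy estimate plus Gr\"onwall yields $w\equiv 0$, i.e.\ the transform \eqref{eq:transform} is preserved in time. Substituting $\partial u=v-\tfrac{i}{2}|u|^2u$ back into the equation for $u$ then shows that $u$ solves \eqref{eq:1}, and the blow-up alternative and continuous dependence for $u$ in $X^4$ transfer from the analogous statements for $(\mathcal S)$ via the elementary equivalences $\|v\|_{X^3}\lesssim \|u\|_{X^4}+\|u\|_{L^\infty}^3$ and $\|\partial u\|_{X^3}\lesssim \|v\|_{X^3}+\|u\|_{L^\infty}^3$.

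I expect the main obstacle to be Step 3: obtaining a clean linear equation for $w$ requires several cubic-in-$u$ cancellations, and justifying the computation at the level of regularity of $X^4$ is presumably why the theorem is stated at the $X^4$ level rather than a lower index. A secondary technical nuisance, absent from the $H^1$ setting of \cite{HaOz92}, is that the $L^\infty$ part of $u$ does not decay at spatial infinity, so every estimate has to be carried out in the Zhidkov structure and cannot be closed by integration by parts alone.
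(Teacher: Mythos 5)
Your overall architecture (equation $\to$ system via $v=\partial u+\frac{i}{2}|u|^2u$, contraction for the system on Zhidkov spaces, preservation of the differential identity, back to \eqref{eq:1}) is exactly the paper's. Steps 1 and 2 are fine, up to the cosmetic difference that the paper solves the system symmetrically in $X^3(\R)\times X^3(\R)$ and only recovers $u\in X^4(\R)$ at the very end from the identity $\partial u=v-\frac{i}{2}|u|^2u$.

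There is, however, a genuine gap in Step 3, and it is precisely the point you flag as a ``secondary technical nuisance'' without actually resolving it. First, the defect equation is not of the form $i\partial_t w+\partial^2 w=aw+b\overline w$: carrying out the computation one finds an unavoidable first-order term $-iu^2\,\partial(\overline v-\overline w)$ on the right-hand side (it comes from $\partial\bigl(-iu^2(\overline v-\overline w)\bigr)$ inside $\partial(Lu)$). This term is harmless only because, after pairing with $\overline w-\overline v$ and taking imaginary parts, it becomes $-\tfrac{i}{2}u^2\,\partial\bigl((\overline v-\overline w)^2\bigr)$, which can be integrated by parts. Second, and more seriously, the quantity $\lVert w-v\rVert_{L^2}^2$ on which your Gr\"onwall argument runs need not be finite: $v$ and $\frac{i}{2}|u|^2u$ are each merely bounded and non-decaying, so $w-v$ is a difference of bounded functions with no a priori $L^2$ integrability for $t\neq 0$ (it vanishes only at $t=0$). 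A plain $L^2$ energy estimate therefore cannot even be started. The paper's fix is a localized energy: take a cut-off $\chi$ with $|\chi'|^2\lesssim\chi$, set $\chi_n(x)=\chi(x/n)$, and run Gr\"onwall on $g_n(t)=\lVert (w-v)\sqrt{\chi_n}\rVert_{L^2}^2$. The commutator terms produced by the cut-off (both from the integration by parts of the first-order term and from the flux term coming from $\partial^2$) are of size $O(n^{-1/2})$ uniformly on compact time intervals, so $g_n(t)\leq C(R)\,n^{-1/2}$, and letting $n\to\infty$ forces $w\equiv v$ pointwise. Without this localization (or an equivalent device) Step 3 does not close, so the proof as written is incomplete at its central point.
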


To obtain the local wellposedness on $\phi+H^k(\R)$ for $\phi$ in Zhidkov spaces $X^l(\R)$, we need to use the following transform
\begin{equation}\label{eq:transform2}
v = \partial u + \frac{i}{2}u(|u|^2-|\phi|^2)+\phi.
\end{equation}
We see that if $u$ is a solution of \eqref{eq:1} then $(u,v)$ is a solution of a system of two equations without the derivative terms. For technical reasons, we will need some regularity on $\phi$ and we take $l=4$. With a solution of the system in hand, we want to obtain a solution of \eqref{eq:1}. In practice, we need to prove that the relation \eqref{eq:transform2} is conserved in time. Our main second result is the following.

\begin{theorem}\label{thm local well posedness on phi plus H2 space}
  Let $\phi \in X^4(\R)$ and $u_0 \in \phi + H^2(\mathbb{R})$. Then the problem \eqref{eq:1} has a unique maximal solution $u \in C((-T_{min},T^{max}),\phi+H^2(\mathbb{R}))$ which is differentiable as a function of $C((-T_{min},T^{max}),\phi+ L^2(\R))$ and such that $u_t\in C((-T_{min},T^{max}), L^2(\R))$.

  Moreover $u$ satisfies the two following properties.\\
\textup{(1)} If $T^{max}$ (resp. $T_{min}$)$< \infty$ then 
\[
 \lim_{t \to T^{max} \text{(resp. } -T_{min}\text{)}}(\norm{u(t)-\phi}_{H^2(\mathbb{R})})=\infty.
\]
\textup{(2)} If $(u^n_0) \subset \phi+H^2(\mathbb{R})$ is such that $\norm{u^n_0-u_0}_{H^2} \to 0$ as $n \rightarrow \infty$ then for all $[T_1,T_2] \subset (-T_{min},T^{max})$ the associated solutions $(u^n)$ of \eqref{eq:1} satisfy
\[
\lim_{n \to \infty}\norm{u^n-u}_{L^{\infty}([T_1,T_2]),H^2} =0.
\]
\end{theorem}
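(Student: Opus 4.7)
The plan is to adapt the Hayashi--Ozawa gauge-transform strategy of Theorem \ref{local well posed on Zhidkov space} to the $\phi$-shifted setting. Suppose $u$ is a sufficiently smooth solution of \eqref{eq:1} with $u-\phi\in H^2$, and set $v:=\partial u+\tfrac{i}{2}u(|u|^2-|\phi|^2)+\phi$ as in \eqref{eq:transform2}. The subtraction of $|\phi|^2$ is precisely what makes $\tfrac{i}{2}u(|u|^2-|\phi|^2)\in H^2$, while $\partial u=\partial\phi+\partial(u-\phi)\in H^1$ since $\phi\in X^4$ ensures $\partial\phi\in H^3$; hence $v-\phi\in H^1(\R)$. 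Using $iu_t=-\partial^2u-iu^2\partial\bar u$ together with the identity
\[
\partial u = v - \phi - \tfrac{i}{2}\,u(|u|^2 - |\phi|^2),
\]
which re-expresses every first derivative of $u$ in terms of $u$, $v$, and $\phi$, a direct computation shows that $(u,v)$ satisfies a semilinear Schr\"odinger system
\[
\begin{cases}
i\partial_t u + \partial^2 u = F_1(u, v, \phi),\\
i\partial_t v + \partial^2 v = F_2(u, v, \phi, \partial\phi, \partial^2\phi, \partial^3\phi),
\end{cases}
\]
where $F_1, F_2$ are polynomial in their arguments and contain no derivatives of $u$ or $v$.

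To solve this system I apply a Banach fixed point argument to the Duhamel formulation. I look for $(u,v)$ in
\[
X_T := \bigl\{(u,v) \in C([-T,T],\phi+H^2(\R)) \times C([-T,T],\phi+H^1(\R)) \ : \ u(0)=u_0,\ v(0)=v_0\bigr\},
\]
with $v_0 := \partial u_0+\tfrac{i}{2}u_0(|u_0|^2-|\phi|^2)+\phi \in \phi+H^1(\R)$. Writing $u=\phi+w$ and $v=\phi+z$, I recast the system as two inhomogeneous Schr\"odinger equations for $(w,z)\in C([-T,T],H^2)\times C([-T,T],H^1)$; the free Schr\"odinger group is unitary on every $H^k(\R)$, and since $\phi\in X^4$ implies $\phi,\partial\phi,\partial^2\phi,\partial^3\phi\in L^\infty$ by Sobolev embedding, and $H^2\hookrightarrow L^\infty$, the polynomial nonlinearities $F_1, F_2$ map $X_T$ into the relevant Sobolev spaces and are locally Lipschitz, giving a unique local solution $(u,v)$ for $T$ small.

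The crucial step is \emph{compatibility}: one must show that the $v$ produced by the fixed point really equals $\partial u+\tfrac{i}{2}u(|u|^2-|\phi|^2)+\phi$. Set
\[
V(t) := \partial u(t) + \tfrac{i}{2}\,u(t)(|u(t)|^2 - |\phi|^2) + \phi - v(t),
\]
so that $V(0)=0$ by construction of $v_0$. Differentiating $V$ in $t$, substituting both equations of the system, and exploiting the precise algebraic identities that defined $F_1, F_2$, I expect an exact cancellation leaving a \emph{closed} linear Schr\"odinger equation
\[
i\partial_t V + \partial^2 V = a(t,x)\, V + b(t,x)\, \bar V,
\]
with coefficients $a, b\in L^\infty_{t,x}$ on any compact subinterval of the life-span of $(u,v)$ (bounded via $u-\phi\in H^2\hookrightarrow L^\infty$ and $\phi\in X^4\subset L^\infty$). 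An $L^2$ energy estimate together with Gronwall's inequality then forces $V\equiv 0$, and $u$ solves \eqref{eq:1} because $F_1$ was built from $-iu^2\partial\bar u$ via the substitution for $\partial u$; the regularity $u_t\in C((-T_{min},T^{max}),L^2(\R))$ is then read off directly from the equation.

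Uniqueness, the blow-up alternative, and continuous dependence then follow by standard arguments: uniqueness from an $L^2$ energy estimate for the difference of two solutions together with their $v$-images; the blow-up alternative by contradiction with local continuation of the system; and continuous dependence by applying the same fixed point argument uniformly to any sequence $u_0^n\to u_0$ in $\phi+H^2$. The main obstacle is the compatibility step: the derivation of $F_1, F_2$ must be done \emph{exactly}, so that $V$ obeys a closed equation driven only by $V$ and $\bar V$; any non-cancelled term would prevent the Gronwall argument. A secondary difficulty, absent in the vanishing-boundary setting of \cite{HaOz92}, is the presence of the non-$L^2$ function $\phi$ in the nonlinearity; the hypothesis $\phi\in X^4$ is dictated by the need to keep $\partial^2\phi$ and $\partial^3\phi$ bounded when they enter $F_2$ through the gauge computation.
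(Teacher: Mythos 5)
Your overall strategy --- the gauge transform \eqref{eq:transform2}, a derivative-free system for $(u,v)$, a fixed point for the system, preservation of the differential identity by a Gronwall estimate, and a return to the original equation --- is exactly the route the paper takes. However, there are two concrete gaps in the execution.

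First, the functional setting of your fixed point does not close. You seek $(u,v)\in C([-T,T],\phi+H^2)\times C([-T,T],\phi+H^1)$, but $F_1$ contains the term $-iu^2(\overline{v}-\overline{\phi})$ (cf.\ \eqref{eq:9}), which lies only in $H^1$ when $v-\phi\in H^1$; since the Schr\"odinger group does not smooth on the Sobolev scale, the Duhamel integral then returns $u-\phi$ only in $H^1$, and your map does not send the space into itself at the $H^2$ level for the first component. The paper instead solves the system \eqref{eq:m20} symmetrically in $H^1\times H^1$ (Proposition \ref{pro:m207} with $k=1$) and recovers $u-\phi\in H^2$ only \emph{a posteriori}, from the conserved identity $\partial\tilde u=\tilde v-\tfrac{i}{2}(\tilde u+\phi)(|\tilde u+\phi|^2-|\phi|^2)-\partial\phi$, whose right-hand side lies in $H^1$.

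Second, two issues arise in the compatibility step. The closed equation for $V$ is not of the pure form $aV+b\overline{V}$: the exact computation (see \eqref{eq new new}) leaves a residual first-order term $-iu^2\,\partial\overline{V}$. This does not destroy the argument --- after multiplying by $\overline{V}$, taking imaginary parts and integrating by parts, the derivative falls on $u^2$, whose derivative is bounded --- but the cancellation you anticipate is too strong, and the energy estimate must be run with this term present. More seriously, with $u_0-\phi\in H^2$ you only get $v_0-\phi\in H^1$, which is below the regularity at which the computation of $LV$ is justified; the paper handles this by approximating $u_0$ with $H^3$ data (so that the corresponding $\tilde v_0^n\in H^2$), proving the preservation at the $H^2\times H^2$ level (Proposition \ref{cor:pvt}), and then passing to the limit using the continuous dependence of the system's solutions. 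Your proposal omits this regularization-and-limit step, which is needed to make the Gronwall argument rigorous at the stated regularity.
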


In the less regular space $\phi+H^1(\R)$, we obtain the local well posedness under a smallness condition on the initial data. Our third main result is the following.

\begin{theorem}\label{local well posedness on phi plus H1 space}
Let $\phi \in X^4(\mathbb{R})$ such that $\norm{\partial\phi}_{H^2}$ is small enough, $u_0 \in \phi+H^1(\mathbb{R})$ such that $\norm{u_0-\phi}_{H^1(\mathbb{R})}$ is small enough. There exist $T>0$ and a unique solution $u$ of \eqref{eq:1} such that
\[
u-\phi \in C([-T,T],H^1(\mathbb{R})) \cap L^4([-T,T],W^{1,\infty}(\R)).
\]
\end{theorem}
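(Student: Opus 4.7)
My plan is to reduce Theorem \ref{local well posedness on phi plus H1 space} to the semilinear system obtained via the gauge transform \eqref{eq:transform2}, and then to close a fixed-point argument at the $L^2\times L^2$ level via $1$D Strichartz estimates, using the smallness assumptions exactly to keep the background-driven terms small enough for the contraction to close. Introducing $v = \partial u + \tfrac{i}{2}u(|u|^2-|\phi|^2)+\phi$, setting $w = u - \phi\in H^1(\R)$ and $z = v - \partial\phi - \phi\in L^2(\R)$, a direct computation (as in the proof of Theorem \ref{thm local well posedness on phi plus H2 space}) turns \eqref{eq:1} into a coupled system
\[
i\partial_t w + \partial^2 w = P_1(w,z,\phi,\partial\phi),\qquad i\partial_t z + \partial^2 z = P_2(w,z,\phi,\partial\phi),
\]
where $P_1,P_2$ are polynomial in their arguments and, crucially, contain \emph{no} derivatives of $w$ or $z$. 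Moreover, by definition of the transform,
\[
\partial w = z - \tfrac{i}{2}u(|u|^2-|\phi|^2),
\]
so $\partial w$ can be recovered algebraically from $z$ and $u$ without differentiating the system.

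Next I would run a contraction mapping for $(w,z)$ in
\[
E_{T,R}=\Bigl\{(w,z): \|w\|_{L^\infty_T L^2}+\|w\|_{L^4_T L^\infty}+\|z\|_{L^\infty_T L^2}+\|z\|_{L^4_T L^\infty}\leq R\Bigr\}
\]
endowed with a suitable weaker metric, and defined as the Duhamel representation of the linearized system with right-hand side frozen at the previous iterate. The key tools are the $1$D Strichartz estimates for the admissible pairs $(\infty,2)$ and $(4,\infty)$. The polynomial nonlinearities $P_1,P_2$ are then controlled by H\"older in time combined with the embedding $H^1(\R)\hookrightarrow L^\infty(\R)$, distributing each cubic or quintic term so that one factor is placed in $L^4_T L^\infty$, one in $L^\infty_T L^2$ and the rest in $L^\infty_T L^\infty$ (using $\phi,\partial\phi\in L^\infty(\R)$ for background factors). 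Gathering powers of $R$, $\|u_0-\phi\|_{H^1}$ and $\|\partial\phi\|_{H^2}$, both the self-mapping and contraction properties close on choosing $T$ small and invoking the smallness hypotheses. The algebraic identity above then promotes $w$ from $L^\infty_T L^2 \cap L^4_T L^\infty$ to $L^\infty_T H^1 \cap L^4_T W^{1,\infty}$, as required by the statement.

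Finally, it must be checked that the fixed point $u:=\phi+w$ actually solves \eqref{eq:1}, rather than just the transformed system. Exactly as in the proofs of Theorems \ref{local well posed on Zhidkov space}--\ref{thm local well posedness on phi plus H2 space}, this is done by showing that the quantity $\eta := v - \partial u - \tfrac{i}{2}u(|u|^2-|\phi|^2) - \phi$ satisfies a linear Schr\"odinger equation in $(\eta,\overline\eta)$ with $L^\infty_{T,x}$ coefficients built from $u,v,\phi$, and vanishing initial data; a standard $L^2$ energy inequality and Gronwall then force $\eta\equiv 0$, so the gauge identity is preserved in time and $u$ solves \eqref{eq:1}.

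The heart of the argument, and the main technical obstacle, is the nonlinear Strichartz estimate of the second step. At the $H^1$ level there is no room to absorb a lost derivative, so the precise multilinear structure of $P_1,P_2$ coming from expanding $u=\phi+w$ in $-iu^2\partial\overline{u}$ has to be analyzed term by term. In particular, the terms containing no factor of $w$ or $z$ (typically of the type $\phi^2\,\partial\overline{\phi}$ or $|\phi|^2\phi\,\partial\overline{\phi}$ after the transform) cannot be absorbed by shrinking $T$ alone and are precisely what forces the smallness assumption on $\|\partial\phi\|_{H^2}$, while the smallness of $\|u_0-\phi\|_{H^1}$ is what initiates the contraction on a sufficiently small ball.
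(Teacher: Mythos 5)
Your reduction to the gauge-transformed system and the contraction in $L^\infty_TL^2\cap L^4_TL^\infty$ via one-dimensional Strichartz estimates is exactly the paper's route (Proposition \ref{pro:n1}), including the role of the smallness hypotheses: note that $\|\partial\phi\|$ small is needed primarily because the transformed datum $\tilde v_0$ contains $\partial\phi$, so that $\|\tilde v_0\|_{L^2}$ must be small to fit in the ball, while the pure-background source $\partial^2\phi$ in $\tilde Q_2$ enters through $L^1_TL^2$ and is in fact absorbed by shrinking $T$; the genuinely non-absorbable terms are the $R^5$-type contributions carrying no positive power of $T$, which is what forces smallness of the ball radius. The recovery of $H^1\cap W^{1,\infty}$ regularity from the algebraic identity is also as in the paper.

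The gap is in your last step, the preservation of the gauge identity. You propose to run the energy/Gronwall argument directly on $\eta=v-\partial u-\tfrac i2u(|u|^2-|\phi|^2)-\phi$ at the regularity of the fixed point, ``exactly as'' in Propositions \ref{pro:15} and \ref{cor:pvt}. But those computations require applying $L$ to $w=\partial u+\tfrac i2|u|^2u+\dots$, and the intermediate identity \eqref{eq:L-u-cube} produces terms such as $u^2\partial^2\overline u$, $\overline u(\partial u)^2$ and $u|\partial u|^2$. When $u-\phi$ is only in $H^1$ and $v-\phi$ only in $L^2$, $\partial^2u$ is a distribution and $(\partial u)^2$ lies merely in $L^1$, so the equation you would write for $\eta$ is not justified; likewise the integration by parts of $-iu^2\partial\bigl((\overline v-\overline w)^2\bigr)$ against a test of the same low regularity, and the $L^\infty$ control of the coefficient polynomials (which involve $\partial u$), are not available pointwise in time. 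The paper avoids this by proving identity preservation only at the $H^2$ level (Propositions \ref{pro:abc} and \ref{pro:fize}, resting on Proposition \ref{cor:pvt} where every manipulation is classical), approximating $\tilde u_0\in H^1$ by data in $H^3$, and passing to the limit in the identity using the $L^\infty_TL^2\cap L^4_TL^\infty$ continuous dependence of Proposition \ref{pro:n1}. Your overall strategy is sound, but this regularization-and-limit step is needed to make the final stage rigorous; as written, your direct Gronwall argument does not close at the $H^1$ level.
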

In the proof of Theorem \ref{local well posedness on phi plus H1 space}, the main difference with the case $\phi+H^2(\R)$ is that we use Strichartz estimates to prove the contractivity of a map on $L^{\infty}([-T,T],L^2(\R)) \cap L^4([-T,T],L^{\infty}(\R))$. In the case of a general nonlinear term (as in \eqref{eq:general}), our method is not working. The main reason is that we do not have a proper transform to give a system without derivative terms. Moreover, our method is not working if the initial data lies on $X^1(\R)$. The main reason is that when we study the system of equations, we would have to study it on $L^{\infty}(\R)$, but we know that the Schr\"odinger group is not bounded from $L^{\infty}(\R)$ to $L^{\infty}(\R)$. Thus, the local wellposedness on less regular space is a difficult problem for nonlinear derivative Schr\"odinger equations.  
 
To prove the conservation laws of \eqref{eq:1}, we need to use a localizing function, which is necessary for integrals to be well defined. Indeed, to obtain the conservation of the energy, using \eqref{eq:1}, at least formally, we have
\[
\partial_t(|\partial u|^2)=\partial_x(F(u))+\partial_t(G(u)),
\]
for functions $F$ and $G$ which will be defined later. The important thing is that when $u$ is not in $H^1(\R)$, there are some terms in $G(u)$ which do not belong to $L^1(\R)$, hence, it is impossible to integrate the two sides as in the usual case. However, we can use a localizing function to deal with this problem. Similarly, we use the localizing function to prove the conservation of the mass and the momentum. The localizing function $\chi$ is defined as follows 
\begin{equation}
\chi \in C^1(\R), \quad
\text{supp}\chi \subset [-2,2],  \quad \text{ and }
\chi = 1 \, \text{on} \, [-1,1].\label{eq of chi}
\end{equation}
For all $R>0$, we define 
\begin{equation}\label{eq of chi R}
\chi_R(x) = \chi\left(\frac{x}{R}\right).
\end{equation}
Our fourth main result is the following.
\begin{theorem}
\label{thm conservation law special case}
Let $q_0 \in \R$ be a constant, $u_0 \in q_0 + H^2(\R)$ be such that $|u_0|^2-q_0^2 \in L^1(\R)$ and $u \in C((-T_{min},T^{max}),q_0+H^2(\R))$ be the associated solution of \eqref{eq:1} given by Theorem \ref{thm local well posedness on phi plus H2 space}. Then, we have 
\begin{align}
M(u)&:=\mathop{\lim}\limits_{R \rightarrow \infty}\int_{\R}(|u|^2-q_0^2)\chi_R \,dx =M(u_0), \quad \label{eq:conservation mass special}\\ 
E(u) &:=  \int_{\R} |\partial u|^2  \,dx +  \frac{1}{2}\Im\int_{\R} (|u|^2\overline{u}-q_0^3)\partial u\,  dx \nonumber\\
& + \frac{1}{6}\int_{\R}(|u|^2-|q_0|^2)^2(|u|^2+2|q_0|^2)  \,dx  =E(u_0), \quad \label{eq:conservation energy special} \\
P(u) &:= \frac{1}{2}\Im\int_{\R} (u-q_0)\partial\overline{u} \, dx-\int_{\R}\frac{1}{4}(|u|^2-|q_0|^2)^2  \,dx  = P(u_0) \quad \label{eq:conservation momentum special}.
\end{align}
for all $t \in (-T_{min},T_{max})$.
\end{theorem}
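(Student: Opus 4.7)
My plan is to derive, for each of the three conserved quantities, a local conservation identity of the form $\partial_t\rho = \partial_x J$, integrate it against $\chi_R$, integrate by parts in $x$, and let $R\to\infty$. Since $u \in C((-T_{min},T^{max}), q_0+H^2(\R))$ with $\partial_t u \in L^2$, equation \eqref{eq:1} in the form $\partial_t u = i\partial^2 u - u^2\partial\overline{u}$ holds pointwise a.e., so all manipulations are rigorous. For the mass, taking $\Re(\overline{u}\,\cdot)$ of the equation and using $\Im(\partial\overline{u}\partial u) = 0$ together with $\Re(\overline{u}\partial u) = \tfrac12\partial|u|^2$ gives
\begin{equation*}
\partial_t(|u|^2) = \partial_x\Bigl[-2\Im(\overline{u}\partial u) - \tfrac12|u|^4\Bigr].
\end{equation*}
Analogous computations, testing the equation against $\overline{\partial_t u}$ for $E$ and against $\partial\overline{u}$ for $P$, and integrating by parts in $x$, produce divergence-form identities with fluxes $J_E$, $J_P$ polynomial in $u$, $\partial u$, $\partial^2 u$; the specific coefficients $(1,\tfrac12,\tfrac16)$ and $(\tfrac12,-\tfrac14)$ are forced by the requirement that all non-divergence remainders cancel.

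\textbf{Localization and passage to the limit.} After integration by parts in $x$, and using $\int\chi_R'\,dx = 0$ to subtract from each flux its constant value $J^\infty$ at the background state $u\equiv q_0$, I obtain
\begin{equation*}
\frac{d}{dt}\int\rho\,\chi_R\,dx = -\int(J-J^\infty)\,\chi_R'\,dx.
\end{equation*}
The key algebraic point is that each term of $J-J^\infty$ is a product in which at least one factor is $u-q_0$, $\partial u$, or $\partial^2 u$, while the remaining factors are uniformly $L^\infty$-bounded (via $H^2 \hookrightarrow L^\infty$ and $\partial u \in H^1 \hookrightarrow L^\infty$). Since $|\chi_R'|\leq C/R$ is supported on $\{R\leq|x|\leq 2R\}$, Cauchy--Schwarz gives each summand a bound of the form $CR^{-1/2}\bigl(\|u-q_0\|_{L^2(|x|\geq R)} + \|\partial u\|_{L^2(|x|\geq R)} + \|\partial^2 u\|_{L^2(|x|\geq R)}\bigr)$ times a norm uniformly bounded on $[T_1,T_2] \subset (-T_{min},T^{max})$. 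Continuity of the flow makes $\{u(t)-q_0 : t\in[T_1,T_2]\}$ relatively compact in $H^2$, so these tails vanish uniformly in $t$; integrating in time yields a uniform $o_R(1)$, and letting $R\to\infty$ gives conservation of $E$ and $P$ outright, and for $M$ both existence of the limit $M(u(t))$ and its equality with $M(u_0)$.

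\textbf{Main obstacle.} The delicate step is the energy identity: the natural candidate flux contains terms like $|u|^4\partial u$ that do not decay at $\pm\infty$, so the product structure on which the tail estimate rests fails unless one first subtracts the constant background $q_0^4\partial u$, etc. Organising the integrations-by-parts so that $J_E - J_E^\infty$ decomposes as a sum of terms each carrying at least one decaying factor is the principal algebraic challenge; the weights $1,\tfrac12,\tfrac16$ in the definition of $E$ are exactly those forced by this cancellation. Once this structure is in place, the argument above applies uniformly to $M$, $E$, and $P$.
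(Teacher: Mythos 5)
Your proposal follows essentially the same route as the paper: derive $\partial_t\rho=\partial_x J$ for the mass, energy and momentum densities, multiply by $\chi_R$, integrate by parts, renormalize the flux by its constant background value so that every remaining term carries an $L^2$-decaying factor, and let $R\to\infty$ using $\norm{\partial\chi_R}_{L^2}=O(R^{-1/2})$ — indeed the paper's limit passage is simpler than your tail-plus-compactness argument, since the bound $\norm{J-J^\infty}_{L^\infty_tL^2_x}\norm{\partial\chi_R}_{L^2}\lesssim C(M)R^{-1/2}$ already suffices without any smallness of the tails. The one step you assert rather than justify is the validity of the pointwise energy identity at bare $q_0+H^2$ regularity (it involves $\partial\partial_t u$, which there lies only in $H^{-1}$); the paper handles this by first reducing, via continuous dependence, to data in $q_0+H^3$.
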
 
\begin{remark}
\begin{itemize}
\item[(i)] When $q_0=0$, we recover the classical conservation of mass, energy and momentum as usually defined. 
\item[(ii)] Using the assumption $|u_0|^2-q_0^2 \in L^1(\R)$, we obtain 
\[
M(u_0)=\mathop{\lim}\limits_{R\rightarrow \infty}\int_{\R}(|u_0|^2-q_0^2)\chi_R \, dx=\int_{\R}(|u_0|^2-q_0^2) \, dx.
\]
Moreover, the existence of the limit $\mathop{\lim}\limits_{R\rightarrow \infty}\int_{\R}(|u|^2-q_0^2)\chi_R \, dx$ does not imply that $|u|^2-q_0^2 \in L^1(\R)$. It means that the property $|u|^2-q_0^2 \in L^1(\R)$ is not conserved in time.
\end{itemize}
\end{remark}
In the classical Schr\"odinger equation, there are special solutions which are called \emph{standing waves}. There are many works on standing waves (see e.g \cite{Stefan09}, \cite{Ca03} and the references therein). In \cite{Zh01}, Zhidkov shows that there are two types of bounded solitary waves possessing limits as $x\rightarrow \pm\infty$. These are monotone solutions and solutions which have precisely one extreme point. They are called \emph{kinks} and \emph{soliton-like solutions}, respectively. In \cite{Zh01}, Zhidkov studied the stability of kinks of classical Schr\"odinger equations. In \cite{BeGrSm14}, the authors have studied the stability of kinks in the energy space. To our knowledges, all these solitary waves are in Zhidkov spaces i.e the Zhidkov space is largest space we know to find special solutions. We want to investigate stationary solutions of \eqref{eq:1} in Zhidkov spaces. Our fifth main result is the following.

\begin{theorem}\label{thm kink soliton}
Let $\phi$ be a stationary solution of \eqref{eq:1} (see Definition \ref{definitionofstationarysolution}). Assume that $\phi$ is not a constant function and satisfies 
\[
\mathop{\inf}\limits_{x\in\R}|\phi(x)|:=m>0
\]
Then $\phi$ is of the form $e^{i\theta}\sqrt{k}$ where 
\begin{align*}
k(x)=2\sqrt{B}+\frac{-1}{\sqrt{\frac{5}{72B}}\cosh(2\sqrt{B}x)+\frac{5}{12\sqrt{B}}}, & \quad  \theta = \theta_0-\int_x^{\infty} \left(\frac{B}{k(y)}-\frac{k(y)}{4}\right)\, dy,
\end{align*}
for some constants $\theta_0 \in\R$, $B>0$. Moreover, if $\phi$ is a stationary solution of \eqref{eq:1} such that $\phi(\infty)=0$ then $\phi \equiv 0$ on $\R$.
\end{theorem}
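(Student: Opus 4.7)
The plan is to reduce the stationary equation $\phi''=-i\phi^2\overline{\phi'}$ to a scalar first-order ODE for $k=|\phi|^2$ via two first integrals, and then integrate this ODE in closed form using a substitution adapted to the double-root structure of the resulting polynomial.

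First I would derive two conservation laws for the ODE. Multiplying by $\overline{\phi}$ and taking imaginary parts gives $\bigl(\Im(\overline{\phi}\phi')\bigr)'=-\tfrac14(|\phi|^4)'$, hence $\Im(\overline{\phi}\phi')+\tfrac14|\phi|^4=B$ for some $B\in\R$. Differentiating $|\phi'|^2$, replacing $\phi''$ via the ODE and then using this first relation produces a second conservation law of the form $|\phi'|^2=\tfrac16|\phi|^6-2B|\phi|^2+C$ with $C\in\R$. Under the hypothesis $\inf|\phi|=m>0$ the polar representation $\phi=re^{i\theta}$ is smooth on all of $\R$; setting $k=r^2$, the two conservation laws reduce to $\theta'=B/k-k/4$ and
\[
(k')^2 \;=\; P(k)\;:=\; \tfrac{5}{12}k^4 - 6Bk^2 + 4Ck - 4B^2.
\]

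Next I would use the Zhidkov regularity $\phi'\in L^2(\R)$ to rule out periodic orbits of the planar system $(k,k')$, which would produce a periodic non-zero (hence non-$L^1$) $|\phi'|^2$; the admissible bounded non-constant orbits must therefore be homoclinic to a double root $k_\infty$ of $P$. Imposing $P(k_\infty)=P'(k_\infty)=0$ simultaneously reduces to $5k_\infty^4-24Bk_\infty^2+16B^2=0$, with positive root $k_\infty=2\sqrt{B}$ (forcing $B>0$) and $C=8B^{3/2}/3$. The substitution $k=2\sqrt{B}-1/w$, tailored to this double root, transforms $(k')^2=P(k)$ into the linear second-order equation $w''=4Bw-5\sqrt{B}/3$, whose bounded even solutions are $w(x)=A\cosh(2\sqrt{B}x)+5/(12\sqrt{B})$; matching the resulting $(w')^2$ against the quadratic $4Bw^2-\tfrac{10\sqrt{B}}{3}w+\tfrac{5}{12}$ extracted from $P$ pins down $A=\sqrt{5/(72B)}$, reproducing the announced formula for $k$. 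The phase is then recovered by integrating $\theta'=B/k-k/4$ from $+\infty$: since $B/k-k/4$ vanishes at $k=k_\infty$ and $k(x)\to k_\infty$ exponentially, the improper integral converges and yields the stated expression with any integration constant $\theta_0\in\R$.

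For the concluding statement, if $\phi$ is stationary with $\phi(\infty)=0$, the Zhidkov embedding $H^1\hookrightarrow C_0(\R)$ gives $\phi'(x)\to 0$ as $x\to\infty$, and passing to the limit in the two conservation laws forces $B=C=0$; hence $|\phi'|^2=|\phi|^6/6$. On any open interval where $\phi\neq 0$ the polar reduction becomes $(r')^2=5r^6/48$, which integrates to $r(x)^{-2}=\mp 2\sqrt{5/48}\,(x-x_0)$ and therefore makes $r$ blow up in finite distance, contradicting the boundedness of $\phi$; at any zero of $\phi$ the identity $|\phi'|^2=|\phi|^6/6$ also forces $\phi'$ to vanish, so local uniqueness for the stationary ODE with zero Cauchy data propagates the trivial solution to all of $\R$. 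The principal obstacle I expect is identifying the substitution $k=2\sqrt{B}-1/w$ in the third paragraph: without a change of variable adapted to the double-root factorisation of $P$ the separable equation $(k')^2=P(k)$ yields only an implicit elliptic-type quadrature, and the clean closed form involving $\cosh(2\sqrt{B}x)$ would not emerge. A secondary technical point is the rigorous elimination of periodic and heteroclinic orbits via the Zhidkov condition, which ultimately rests on the decay properties of $|\phi'|^2$ along any admissible trajectory.
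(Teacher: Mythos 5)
Your proposal follows the same skeleton as the paper's proof --- polar decomposition, the first integral $\Im(\overline{\phi}\phi')+\tfrac14|\phi|^4=B$, reduction to a first-order autonomous ODE for $k=|\phi|^2$, explicit integration, and recovery of the phase from $\theta'=B/k-k/4$ --- and your treatment of the vanishing case ($B=C=0$, $|\phi'|^2=\tfrac16|\phi|^6$, affine behaviour of $r^{-2}$) is essentially identical to the paper's. Where you genuinely diverge is in how the constants of integration are pinned down. The paper proves a separate regularity lemma (Lemma~\ref{lm 1}): from $\phi\in X^3(\R)$ and $\theta_x=\frac{4B-k^2}{4k}\in L^2(\R)$ it deduces $k-2\sqrt B\in H^3(\R)$, hence $B>0$ and $k\to2\sqrt B$ at infinity, and then reads off the remaining constant by letting $x\to\infty$ in the second-order equation for $k$. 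You instead classify the bounded orbits of $(k')^2=P(k)$: periodic orbits are excluded by $\phi'\in L^2(\R)$, so the orbit must be homoclinic to a double root of $P$. This is a legitimate and rather more self-contained route, and your linearizing substitution $k=2\sqrt B-1/w$ makes explicit a computation the paper only asserts (``the equation can be explicitly solved''). Your derivation of the second conservation law directly on $\phi$, without passing through polar coordinates, is also cleaner than the paper's, which has to argue separately that $\{R_x\neq0\}$ is dense before differentiating the first integral.

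One concrete gap: the double-root condition $5k_\infty^4-24Bk_\infty^2+16B^2=0$ factors as $(k_\infty^2-4B)(5k_\infty^2-4B)=0$ and, for $B>0$, has \emph{two} positive roots, $k_\infty=2\sqrt B$ and $k_\infty=2\sqrt{B/5}$; you assert the first as if it were the only one. The second must be excluded: with $k_\infty^2=4B/5$ one finds $P(k)=\tfrac{5}{12}(k-k_\infty)^2\left(k^2+2k_\infty k-12B\right)$, and the quadratic factor equals $-\tfrac{48B}{5}<0$ at $k=k_\infty$, so $P\le 0$ in a neighbourhood of $k_\infty$ and the only orbit through that double root is the constant one --- but this needs to be said. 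Two smaller points: ``bounded even solutions $w=A\cosh(2\sqrt Bx)+\tfrac{5}{12\sqrt B}$'' is a slip ($\cosh$ is unbounded; what you actually need is that $w$ never vanishes, so that $h=-1/w$ stays bounded, together with a translation normalizing the minimum of $w$); and in the vanishing case the claim that ``$r$ blows up in finite distance'' holds only in one direction when $r'<0$ --- the clean statement is that $r^{-2}$ is affine with non-zero slope on the maximal interval of non-vanishing, which is incompatible both with that interval terminating at a zero of $\phi$ (where $r^{-2}\to+\infty$) and with it being all of $\R$ (where a non-constant affine function becomes negative).
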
               

\begin{remark}
We have classified stationary solutions of \eqref{eq:1} for the functions which are vanishing at infinity, and for the functions which are not vanishing on $\overline{\R}$. One question still unanswered is the class of stationary solutions of \eqref{eq:1} vanishing at a point in $\R$.  
\end{remark}

This paper is organized as follows. In Section 2, we give the proof of local well posedness of solution of \eqref{eq:1} on Zhidkov spaces. In Section 3, we prove the local well posedness on $\phi+H^2(\R)$ and $\phi+H^1(\R)$, for $\phi \in X^4(\R)$ a given function. In Section 4, we give the proof of conservation laws when the initial data is in $q_0+H^2(\R)$, for a given constant $q_0 \in \R$. Finally, in Section 5, we have some results on stationary solutions of \eqref{eq:1} on Zhidkov spaces.  

\begin{notation}
In this paper, we will use in the following the notation $L$ for the linear part of the Schr\"odinger equation, that is
\[
L=i\partial_t+\partial^2.
 \] 
Moreover, $C$ denotes various positive constants and $C(R)$ denotes  constants depending on $R$. 
\end{notation}

\subsection*{Acknowledgement}
The author wishes to thank Prof.Stefan Le Coz for his guidance and encouragement.

\section{Local existence in Zhidkov spaces}
\label{sec local wellposedness on Zhidkov spaces}
In this section, we give the proof of Theorem \ref{local well posed on Zhidkov space}.
% We rewrite the equation \eqref{eq:1} in the following form
% \begin{equation}
% \label{eq:2}
% \begin{cases}
% Lu = -i u^2 \partial\overline{u} ,\\
% u(0)= u_0.
% \end{cases}
% \end{equation}
\subsection{Preliminaries on Zhidkov spaces}
\label{sec:preliminaries}

Before presenting our main results, we give some preliminaries. We start by recalling the definition of Zhidkov spaces, which were introduced by Peter Zhidkov in his pioneering works on Schr\"odinger equations with non-zero boundary conditions (see \cite{Zh01} and the references therein).

\begin{definition}
Let $k\in\mathbb N$, $k\geq 1$. The \emph{Zhidkov space} $X^k(\mathbb
R)$ is defined by
\[
X^k(\mathbb{R}) = \{ u \in L^{\infty}(\mathbb{R}) :\partial u \in
H^{k-1}(\mathbb{R})\}.
\]
It is a Banach space when endowed with the norm
\[
\norm{\cdot}_{X^k}=\norm{\cdot}_{L^\infty}+\sum_{\alpha=1}^k\norm{\partial^\alpha\cdot}_{L^2}.
\]
\end{definition}

It was proved by Gallo \cite[Theorem 3.1 and Theorem 3.2]{Ga04} that the
Schr\"odinger operator defines a group on Zhidkov spaces. More
precisely, we have the following result.

\begin{proposition}
\label{pro:group}
Let $k \geq 1$ and $u_0 \in X^k(\mathbb{R})$. For $t \in \mathbb{R}$ and $x \in \mathbb{R}$, the quantity
\begin{equation}
S(t)u_0(x):=\left\{
\begin{aligned}
e^{-i\pi /4} \pi^{-1/2} \lim_{ \varepsilon \to 0} \int_{\mathbb R} e^{(i-\varepsilon) z^2}u_0(x+2\sqrt{t} z) dz &\text{ if } t \geq 0 ,\\
e^{i\pi /4} \pi^{-1/2} \lim_{ \varepsilon \to 0} \int_{\mathbb R} e^{(-i-\varepsilon) z^2}u_0(x+2\sqrt{-t} z) dz  &\text{ if } t \leq 0 . 
\end{aligned}
\right.
\end{equation}
is well-defined and $S$ defines a strongly continuous group on $X^{k}(\mathbb R)$.
For all $u_0 \in X^k(R)$ and $t \in \R$ we have
\[
  \norm{S(t)u_0}_{X^k} \leq C(k)(1+ |t|^{1/4})\norm{u_0}_{X^k}.
  \]
The generator of the group $(S(t))|_{t\in\mathbb{R}}$ on $X^k(\mathbb{R})$ is $i\partial^2$ and its domain is $X^{k+2}(\mathbb{R})$. 
\end{proposition}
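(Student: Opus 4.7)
The plan is to split the analysis into an $L^{\infty}$ piece handled directly through the oscillatory integral representation and a derivative piece handled by the classical $L^2$ theory. Write $u_0 \in X^k(\R)$ as a bounded function whose derivatives $\partial^\alpha u_0$ for $1\leq\alpha\leq k$ lie in $L^2(\R)$; on the latter the free Schr\"odinger group is well understood and acts as an $L^2$-isometry (it is also well defined on $H^{k-1}$ and commutes with $\partial$). The only genuinely new ingredient is to make sense of $S(t)u_0$ in the $L^\infty$-component and to control its $L^\infty$-norm in terms of $\norm{u_0}_{X^k}$.

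For the first step, observe that for fixed $\varepsilon>0$, the factor $e^{-\varepsilon z^2}$ lies in $L^1(\R)$, so the integral defining $S(t)u_0(x)$ is absolutely convergent for any $u_0\in L^\infty(\R)$. To pass to the limit $\varepsilon\to 0$ we split the $z$-integral at $|z|=1$: on $|z|\leq 1$ no oscillation is needed and the contribution is bounded by $C\norm{u_0}_{L^\infty}$; on $|z|\geq 1$ I integrate by parts once using the identity
\[
e^{(i-\varepsilon)z^2}=\frac{1}{2(i-\varepsilon)z}\,\partial_z e^{(i-\varepsilon)z^2},
\]
transferring the derivative onto $z\mapsto u_0(x+2\sqrt{|t|}z)$ and producing a factor $2\sqrt{|t|}\,(\partial u_0)(x+2\sqrt{|t|}z)$ multiplied by $1/z^2$ together with integrable boundary terms. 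Applying Cauchy--Schwarz gives
\[
\Bigl|\int_{|z|\geq 1} e^{(i-\varepsilon)z^2}u_0(x+2\sqrt{|t|}z)\,dz\Bigr|\leq C\sqrt{|t|}\Bigl(\int_{|z|\geq 1}\frac{dz}{z^4}\Bigr)^{1/2}\Bigl(\int_\R|(\partial u_0)(x+2\sqrt{|t|}z)|^2\,dz\Bigr)^{1/2},
\]
and a change of variables turns the last factor into $(2\sqrt{|t|})^{-1/2}\norm{\partial u_0}_{L^2}$. This both produces the $|t|^{1/4}$ growth announced in the statement and, via dominated convergence on each piece, proves existence of the limit in $\varepsilon$. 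Combined with the isometric action of $S(t)$ on $\partial^\alpha u_0\in L^2$ for $1\leq\alpha\leq k$ and the commutation $\partial^\alpha S(t)u_0=S(t)\partial^\alpha u_0$ (verified first for Schwartz data and extended by density), this yields the full $X^k$-bound.

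For the group and continuity properties, the semigroup identity $S(t+s)=S(t)S(s)$ and $S(0)=\mathrm{Id}$ are first checked on Schwartz functions via the Fourier representation and then transferred to $X^k(\R)$ using the bound just obtained together with an approximation procedure. Strong continuity $S(t)u_0\to u_0$ in $X^k$ as $t\to 0$ follows from the $L^2$-strong continuity for the derivatives, while for the $L^\infty$-part I would use the same split-and-integrate-by-parts argument to see that the $|z|\geq 1$ contribution is $O(|t|^{1/4})$ and the $|z|\leq 1$ contribution tends to $u_0(x)$ uniformly in $x$ by uniform continuity of $u_0$ (which holds since $X^1(\R)\hookrightarrow C_b(\R)$). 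Finally, to identify the generator I differentiate the kernel formula: for $u_0\in X^{k+2}$ one has $i\partial^2 u_0\in X^k$ and
\[
\frac{S(t)u_0-u_0}{t}\xrightarrow[t\to 0]{X^k} i\partial^2 u_0,
\]
again splitting into an $L^2$-derivative part (standard) and the $L^\infty$-part handled by the same oscillatory integral analysis; conversely, if $u_0$ is in the domain then $i\partial^2 u_0 \in X^k$ forces $\partial^2 u_0\in H^{k-1}$, i.e.\ $u_0\in X^{k+2}$.

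The main obstacle is the $L^\infty$-strong continuity and the justification of the $\varepsilon\to 0$ limit uniformly in $x$: the Schr\"odinger group is \emph{not} bounded on $L^\infty$, so I cannot appeal to any abstract $L^\infty$ theory and must extract decay in $z$ only from the derivative $\partial u_0\in L^2$. Everything hinges on making the integration-by-parts step rigorous with careful tracking of boundary terms at $z=\pm 1$ and at infinity, and on pairing it with Cauchy--Schwarz to convert an $L^2$-norm of $\partial u_0$ into a pointwise bound in $x$.
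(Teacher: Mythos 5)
The paper does not prove this proposition: it is quoted verbatim from Gallo \cite[Theorems 3.1 and 3.2]{Ga04}, so there is no in-paper argument to compare against. Your sketch follows what is essentially the standard (Gallo/Zhidkov) route: absolute convergence of the regularized integral for fixed $\eps>0$, a split of the $z$-integral with one integration by parts on $|z|\geq 1$ to borrow decay from $\partial u_0\in L^2(\R)$ via Cauchy--Schwarz (this is exactly where the $|t|^{1/4}$ comes from), and the $L^2$-isometry plus commutation with $\partial$ for the derivative components. That core is correct, up to a harmless bookkeeping slip: differentiating $z\mapsto u_0(x+2\sqrt{|t|}z)/z$ puts the factor $2\sqrt{|t|}\,(\partial u_0)(x+2\sqrt{|t|}z)$ against $1/z$ (not $1/z^2$; the $1/z^2$ weight lands on the undifferentiated $u_0$ term), so the Cauchy--Schwarz should be run against $\int_{|z|\geq 1}z^{-2}\,dz$; the conclusion is unchanged.

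The one genuine gap is the phrase ``first checked on Schwartz functions \dots and then transferred to $X^k(\R)$ using \dots an approximation procedure.'' Schwartz functions are \emph{not} dense in $X^k(\R)$: a nonzero constant is at $L^\infty$-distance at least its modulus from every Schwartz function, so no density argument in the $X^k$-norm can carry the group law or $S(0)=\mathrm{Id}$ from $\mathcal{S}(\R)$ to $X^k(\R)$. You need a different mechanism, e.g.\ observe that $X^k(\R)\subset\mathcal{S}'(\R)$, that the regularized kernel formula coincides with the tempered-distribution propagator $e^{it\partial^2}$ (for which the group law is immediate on the Fourier side), and that two bounded continuous functions agreeing as tempered distributions agree pointwise; alternatively verify $S(t+s)=S(t)S(s)$ directly on the regularized integrals by Fubini and a Gaussian convolution identity before letting $\eps\to 0$. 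A smaller point of the same flavour: for the uniform-in-$x$ convergence $S(t)u_0\to u_0$ in $L^\infty$ you should invoke the quantitative embedding $|u_0(x)-u_0(y)|\leq |x-y|^{1/2}\norm{\partial u_0}_{L^2}$ rather than mere continuity, since plain continuity of a bounded function is not uniform. With those two repairs your argument is a complete proof along the same lines as the cited reference.
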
 

\begin{remark}
Since, for all $\phi \in X^k(\R)$, we have $\phi+H^k(\R) \subset X^k(\R)$, the uniqueness of solution in $X^k(\R)$ implies the uniqueness of solution in $\phi+H^k(\R)$, and the existence of solution in $\phi+H^k(\R)$ implies the existence of solution in $X^k(\R)$.
\end{remark}

\subsection{From the equation to the system}
\label{sec:from-equation-system}
The equation \eqref{eq:1} contains a spatial derivative of $u$ in the nonlinear part, which makes it difficult to work with. In the following proposition, we indicate how to eliminate the derivative in the nonlinearity by introducing an auxiliary  function and converting the equation into a system.
\begin{proposition}
    \label{pro:from-eq-to-syst}
Let $k\geq 2$. Given $u\in X^k(\R)$, we define $v$ by
\begin{equation}\label{eq:v}
v=\partial u + \frac{i}{2} |u|^2u.
\end{equation}
Hence, $v \in X^{k-1}(\R)$. Furthermore, if $u$ satisfies the equation \eqref{eq:1}, then the couple $(u,v)$ verifies the system
\begin{equation}
  \label{eq:5}
  \left\{
  \begin{aligned}
    Lu&=P_1(u,v),\\
    Lv&=P_2(u,v),
  \end{aligned}
  \right.
\end{equation}
where $P_1$ and $P_2$ are given by
  \begin{equation}
    \label{eq:6}
\begin{aligned}
P_1(u,v) &= -iu^2\overline{v} + \frac{1}{2}|u|^4u ,\\
P_2(u,v) &= i\overline{u}v^2+\frac{3}{2}|u|^4v + u^2|u|^2\overline{v}.
\end{aligned}
\end{equation}
  \end{proposition}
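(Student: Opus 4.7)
The plan is to treat the two claims separately: the regularity of $v$, then the derivation of the system by direct computation.

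First I would verify that $v \in X^{k-1}(\R)$. Since $u \in X^k(\R)$, we have $\partial u \in H^{k-1}(\R)$, and for $k \geq 2$ the Sobolev embedding $H^{k-1}(\R) \hookrightarrow L^{\infty}(\R)$ together with $\partial H^{k-1}(\R) \subset H^{k-2}(\R)$ give $H^{k-1}(\R) \hookrightarrow X^{k-1}(\R)$. For the nonlinear term, I would show that $X^k(\R)$ is closed under pointwise products when $k \geq 2$: if $f,g \in X^k(\R)$, then $fg \in L^{\infty}$ and $\partial(fg) = f\partial g + g\partial f$ lies in $H^{k-1}(\R)$, using that $H^{k-1}(\R)$ is a Banach algebra continuously embedded in $L^{\infty}(\R)$ for $k-1 \geq 1$ together with $\partial f, \partial g \in H^{k-1}(\R)$ and $f,g \in L^{\infty}(\R)$. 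Applying this to $|u|^2 u = u \cdot u \cdot \overline{u}$ gives $|u|^2 u \in X^k(\R) \subset X^{k-1}(\R)$, so $v \in X^{k-1}(\R)$.

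Second, the equation for $u$ in the system follows by a one-line substitution. Taking the complex conjugate of \eqref{eq:v} gives $\partial \overline{u} = \overline{v} + \frac{i}{2}|u|^2 \overline{u}$, and plugging into \eqref{eq:1} yields
\[
Lu = -iu^2 \partial \overline{u} = -iu^2\overline{v} - \tfrac{i^2}{2}u^2|u|^2 \overline{u} = -iu^2 \overline{v} + \tfrac{1}{2}|u|^4 u = P_1(u,v).
\]

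For the equation on $v$, I would use that $L$ commutes with $\partial$ to write $Lv = \partial(Lu) + \frac{i}{2}L(u^2 \overline{u})$. The first term expands as $\partial(-iu^2\partial\overline{u}) = -2iu(\partial u)(\partial \overline{u}) - iu^2 \partial^2 \overline{u}$. For the second term, I would apply Leibniz to both $i\partial_t(u^2\overline{u})$ and $\partial^2(u^2\overline{u})$, and eliminate every time derivative by inserting $i\partial_t u = -\partial^2 u - iu^2 \partial \overline{u}$ (and its conjugate) from \eqref{eq:1}. After collecting terms, the $\partial^2 u$ contributions from $i\partial_t(u^2\overline{u})$ should cancel those coming from $\partial^2(u^2\overline{u})$, while the two $\partial^2 \overline{u}$ contributions should combine with the one from $\partial(Lu)$ to cancel entirely, leaving a polynomial expression in $u, \overline{u}, \partial u, \partial \overline{u}$. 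Finally I would re-express the result in terms of $u$ and $v$ alone by substituting $\partial u = v - \frac{i}{2}|u|^2 u$ and $\partial \overline{u} = \overline{v} + \frac{i}{2}|u|^2 \overline{u}$, and verify that the outcome coincides with $P_2(u,v) = i\overline{u}v^2+\tfrac{3}{2}|u|^4 v + u^2|u|^2 \overline{v}$.

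The main obstacle is the bookkeeping in the computation of $Lv$: monomials of several distinct types (quadratic in first-order derivatives, second-order in $u$ or $\overline{u}$, quintic or septic in $u$) appear simultaneously, and the identity only emerges after non-trivial cancellations. The two key checks are the complete elimination of all second-order spatial derivatives, and the vanishing of the would-be $|u|^6 u$ coefficient after the final substitution of $v$ and $\overline{v}$, which amounts to the arithmetic identity $-\tfrac{1}{4} + \tfrac{3}{4} - \tfrac{1}{2} = 0$.
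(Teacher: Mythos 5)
Your proposal is correct and follows essentially the same route as the paper: the first equation by direct substitution of $\partial\overline{u}=\overline{v}+\frac{i}{2}|u|^2\overline{u}$, and the second by writing $Lv=\partial(Lu)+\frac{i}{2}L(|u|^2u)$, expanding by Leibniz (the paper merely packages this as the identities $L(fg)=L(f)g+fL(g)+2\partial f\,\partial g$ and $L(\overline{u})=-\overline{Lu}+2\partial^2\overline{u}$), observing the cancellation of all second-order spatial derivatives, and then substituting $v$ back in, with the $|u|^6u$ coefficient vanishing exactly as you predict. Your additional argument that $v\in X^{k-1}(\R)$ via the algebra property of $H^{k-1}(\R)$ fills in a regularity claim the paper states without proof.
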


  \begin{proof}
Let $u$ be a solution of \eqref{eq:1} and $v$ be defined by \eqref{eq:v}. 
Then we have
\begin{equation*}
Lu = -i u^2 \partial\overline{u} = -i u^2 \left(\overline{v}  +\frac{i}{2} (|u|^2 \overline{u}\right) 
 = -i u^2 \overline{v} + \frac{1}{2} |u|^4 u,
\end{equation*}
which gives us the first equation in \eqref{eq:5}.

On the other hand, since $L$ and $\partial$ commute and $u$ solves \eqref{eq:1}, we have
\begin{multline} 
Lv = \partial (Lu) + \frac{i}{2} L(|u|^2u)  
= \partial (-iu^2 \partial\overline{u}) + \frac{i}{2} L(|u|^2u) = -i (u^2 \partial^2\overline{u}+2u|\partial u|^2) + \frac{i}{2} L(|u|^2u) 
.
\label{eq:3}
\end{multline}
Using 
\begin{equation}
  \label{eq:L-on-product}
L(uv) = L(u)v+ u L(v) + 2 \partial u \partial v ,\quad
L(\overline{u}) = - \overline{Lu} + 2 \partial^2\overline{u},
\end{equation}
we have
\begin{multline}
  \label{eq:L-u-cube}
L(|u|^2u) = L(u^2 \overline{u}) 
= L(u^2) \overline{u} + u^2 L(\overline{u}) + 2 \partial(u^2)\partial\overline{u} \\
= \left(2L(u)u + 2 (\partial u)^2)\right) \overline{u} + u^2 (-\overline{Lu} + 2 \partial^2\overline{u}) + 4u|\partial u|^2 \\
= 2 L(u) |u|^2 + 2 \overline{u} (\partial u)^2 + 2 u^2 \partial^2\overline{u} - u^2 \overline{Lu} + 4u|\partial u|^2.
%= -2i u^2 \partial\overline{u} |u|^2 + 2\overline{u} (\partial u)^2 + 2 u^2 \partial^2 u - i |u|^4 \partial u + 4u |\partial u|^2. 
\end{multline}
We now recall that $u$ verifies \eqref{eq:1} to obtain
\begin{equation}
  \label{eq:i-L-u-cube}
\frac{i}{2} L(|u|^2u) = u^2 \partial\overline{u} |u|^2 + i \overline{u} (\partial u)^2 + i u^2 \partial^2 \overline{u} + \frac{1}{2} \partial u |u|^4 + 2i u |\partial u|^2.
\end{equation}
Subsituting in \eqref{eq:3}, we get
\begin{align*}
Lv &= -i(u^2 \partial^2\overline{u} + 2u|\partial u|^2) + u^2 \partial\overline{u}|u|^2 + i \overline{u}(\partial u)^2 + iu^2 \partial^2\overline{u} + \frac{1}{2} \partial u |u|^4 + 2iu|\partial u|^2 ,\\
&= u^2 \partial\overline{u}|u|^2 + i\overline{u}(\partial u)^2 + \frac{1}{2} \partial u |u|^4 .
\end{align*}
Observe here that the second order derivatives of $u$ have vanished and only first order derivatives remain. Therefore, using the expression of $v$ given in \eqref{eq:v} to subsitute $\partial u$, we obtain by direct calculations
\[
Lv = i\overline{u}v^2+\frac{3}{2}|u|^4v+u^2|u|^2\overline{v},
\]
which gives us the second equation in \eqref{eq:5}.
\end{proof}

\subsection{Resolution of the system}
\label{sec:resolution-system}

We now establish the local well-posedness of the system \eqref{eq:5} in Zhidkov spaces. 

\begin{proposition} \label{pro:10}
Let $k \geq 3$, and $(u_0 , v_0) \in X^k(\mathbb{R}) \times X^k(\mathbb{R})$. There exist $T_{min},T^{max} > 0$ and a unique maximal solution $(u,v)$ of system \eqref{eq:5} such that $(u,v) \in C((-T_{min} , T^{max}) , X^k(\R)) \cap C^1((-T_{min} , T^{max}) , X^{k-2}(\R))$. Furthermore the following properties are satisfied.
\begin{itemize}
\item \emph{Blow-up alternative.} If $T^{max}$ (resp. $T_{min}$)$< \infty$ then
  \[
    \lim_{t \to T^{max} \text{(resp. } T_{min}\text{)}} (\norm{u(t)}_{X^k} + \norm{v(t)}_{X^k}) = \infty.
  \]
\item \emph{Continuity with respect to the initial data.} If $(u^n_0,v^n_0) \in X^k\times X^k$  is such that
  \[
    \norm{u^n_0 - u_0}_{X^k} + \norm{v^n_0 - v_0}_{X^k} \to 0
  \]
  then  for any subinterval $[T_1,T_2] \subset (-T_{min},T^{max})$ the associated solution $(u^n,v^n)$ of \eqref{eq:5} satisfies
  \[
    \lim_{n \to \infty}\left(\norm{u^n-u}_{L^{\infty}([T_1,T_2], X^k)} + \norm{v^n-v}_{L^{\infty}([T_1,T_2], X^k)} \right) = 0.
  \]
\end{itemize}
\end{proposition}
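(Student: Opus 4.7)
The plan is to convert the system \eqref{eq:5} into an integral equation via Duhamel's formula and run a contraction mapping argument in $X^k \times X^k$. Concretely, a pair $(u,v) \in C([-T,T],X^k \times X^k)$ solves \eqref{eq:5} if and only if it is a fixed point of $\Phi = (\Phi_1,\Phi_2)$ with
\[
\Phi_i(u,v)(t) = S(t)w_0^i - i\int_0^t S(t-s)P_i(u(s),v(s))\,ds,
\]
where $w_0^1=u_0$, $w_0^2=v_0$. The key structural fact to exploit is that for $k\geq 3$, and in particular $k-1\geq 2 > 1/2$, we have $H^{k-1}(\R)\hookrightarrow L^\infty(\R)$ and $H^{k-1}(\R)$ is a Banach algebra, which together with the $L^\infty$ control in $X^k$ shows that $X^k(\R)$ is itself a Banach algebra. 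Since $P_1$ and $P_2$ are polynomials in $u,v,\overline u,\overline v$, they map $X^k\times X^k$ into $X^k$ and are locally Lipschitz with estimates of the form
\[
\|P_i(u_1,v_1)-P_i(u_2,v_2)\|_{X^k} \leq C(M)\bigl(\|u_1-u_2\|_{X^k}+\|v_1-v_2\|_{X^k}\bigr)
\]
on balls of radius $M$.

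Next I would set up the fixed-point space
\[
Y_{T,M}=\Bigl\{(u,v)\in C([-T,T],X^k\times X^k) : \sup_{[-T,T]}(\|u\|_{X^k}+\|v\|_{X^k})\leq M\Bigr\},
\]
endowed with the $L^\infty$-in-time metric. Using the growth bound $\|S(t)w\|_{X^k}\leq C(k)(1+|t|^{1/4})\|w\|_{X^k}$ from Proposition \ref{pro:group}, one gets
\[
\|\Phi_i(u,v)\|_{L^\infty([-T,T],X^k)} \leq C(k)(1+T^{1/4})\|w_0^i\|_{X^k} + C(k)(1+T^{1/4})\,T\,C(M).
\]
Choosing $M=4C(k)(\|u_0\|_{X^k}+\|v_0\|_{X^k})$ and $T=T(M)$ small enough, $\Phi$ maps $Y_{T,M}$ into itself; the same group bound combined with the local Lipschitz estimate on $P_i$ makes $\Phi$ a strict contraction. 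Banach's fixed-point theorem gives a unique solution $(u,v)\in C([-T,T],X^k\times X^k)$, which can then be extended to a maximal interval $(-T_{min},T^{max})$ in the usual way. For the time regularity, I would insert $(u,v)$ back into \eqref{eq:5}: since $u\in X^k$ implies $\partial^2 u\in X^{k-2}$ (using $k-2\geq 1$) and $P_1(u,v)\in X^k\subset X^{k-2}$, the equation reads $u_t=-i\partial^2 u+(-i)P_1(u,v)$ in $X^{k-2}$, giving $(u,v)\in C^1((-T_{min},T^{max}),X^{k-2}\times X^{k-2})$.

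For the blow-up alternative I would argue by contradiction: if $T^{max}<\infty$ but $\|u(t)\|_{X^k}+\|v(t)\|_{X^k}$ stays bounded as $t\to T^{max}$, then the local existence time $T(M)$ provided by the fixed point argument depends only on $M$, and restarting the Cauchy problem from a time $t_0<T^{max}$ sufficiently close to $T^{max}$ would extend the solution past $T^{max}$, a contradiction. For continuous dependence, given $(u_0^n,v_0^n)\to(u_0,v_0)$ in $X^k\times X^k$ and a compact $[T_1,T_2]\subset(-T_{min},T^{max})$, I would first note that the solution $(u,v)$ remains bounded in $X^k$ on $[T_1,T_2]$, so by the contraction estimate and the bound on $T(M)$ the solutions $(u^n,v^n)$ also exist on $[T_1,T_2]$ for $n$ large. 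Subtracting the two Duhamel formulas, using the local Lipschitz property of $P_i$ and Gronwall's lemma then yields
\[
\|u^n-u\|_{L^\infty([T_1,T_2],X^k)}+\|v^n-v\|_{L^\infty([T_1,T_2],X^k)}\;\xrightarrow[n\to\infty]{}\;0.
\]

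The main obstacle is the fact that $S(t)$ is \emph{not} uniformly bounded on $X^k(\R)$; the growth factor $(1+|t|^{1/4})$ in Proposition \ref{pro:group} enters every estimate. This forces all contraction and continuity bounds to be formulated on bounded time intervals with an extra factor that must be absorbed by shrinking $T$, but it causes no genuine trouble as long as the Banach algebra property and the purely polynomial (derivative-free) character of $P_1,P_2$ are available, which is precisely what Proposition \ref{pro:from-eq-to-syst} was designed to deliver.
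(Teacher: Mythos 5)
Your proposal is correct and follows essentially the same route as the paper: the paper likewise reformulates \eqref{eq:5} via Duhamel's formula for the group $S(t)$ on $X^{k-2}(\R)$ with generator $i\partial^2$ of domain $X^k(\R)$, observes that $P=(P_1,P_2)$ is Lipschitz on bounded sets of $X^k\times X^k$ because it is polynomial and derivative-free, and then invokes the standard semigroup fixed-point theorems of Cazenave--Haraux for existence, uniqueness, the blow-up alternative and continuous dependence. The only difference is that you write out the contraction, Gronwall and continuation arguments explicitly where the paper delegates them to the reference.
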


\begin{proof}
  Consider the operator $A:D(A) \subset X^{k-2}(\R)\to X^{k-2}(\R)$ defined by  $A= i\partial^2$ with domain $D(A) = X^k(\R)$. From Proposition \ref{pro:group} we know that the operator $A$ is the generator of the Schr\"odinger group $S(t)$ on $X^{k-2}(\R)$.  From classical arguments (see \cite[Lemma 4.1.1 and Corollary 4.1.8]{CaHa98}) the couple $(u,v) \in C((-T_{min} , T^{max}) , X^k(\R)) \cap C^1((-T_{min} , T^{max}) , X^{k-2}(\R))$  solves  \eqref{eq:5} if and only if the couple $(u,v) \in C((-T_{min} , T^{max}) , X^k(\R))$ solves
\begin{equation} \begin{cases}
(u,v) = S(t)(u,v) - i \int_0^t S(t-s)P(u,v)(s) ds ,\\
u(0) = u_0 \in X^k(\R) , v(0) = v_0 \in X^k(\R) ,
\end{cases}
\end{equation}
where $S(t)(u,v):=(S(t)u,S(t)v)$, $P(u,v) = (P_1(u,v) , P_2(u,v))$ and $P_1$ and $P_2$ are defined in \eqref{eq:6}.
% such that 
% \begin{align*}
% P_1(u,v) &= -i u^2 (\overline{v} -1) + \frac{1}{2}u|u|^2(|u|^2-1) ,\\
% P_2(u,v) &= u^2(\overline{v}-1) \left(|u|^2 - \frac{1}{2}\right) + |u|^2(v-1)\left(\frac{3}{2}|u|^2-1\right) + i\overline{u}(v-1)^2.
% \end{align*}
Consider $P$ as a map from $X^k(\R) \times X^k(\R)$ into $X^k(\R) \times X^k(\R)$.
Since $P_1$ and $P_2$ are polynomial in $u$ and $v$, the map $P$ is Lipchitz continuous on bounded sets of $X^k(\R) \times X^k(\R)$.
% More precise, if $(u,v)$ and $ (x,y)$ are function in $X^k \times X^k$ such that $\norm{(u,v)}_{X^k \times X^k} , \norm{(x,y)}_{X^k \times X^k} < M$ for some $M>0$ then there exists the constant $C(M) > 0$ depend on $M$ such that
% \[
% \norm{P(u,v) - P(x,y)}_{X^k \times X^k} \leq C(M) \norm{(u,v) - (x,y)}_{X^k \times X^k}.
% \]
The result then follows from standard arguments (see \cite[Theorem 4.3.4 and Theorem 4.3.7]{CaHa98}).
\end{proof}

\subsection{Preservation of the differential identity}
\label{sec:pres-diff-ident}

The following proposition establishes the link from \eqref{eq:5} to \eqref{eq:1} by showing preservation along the time evolution of the differential identity 
\[
v_0 = \partial u_0 + \frac{i}{2} |u_0|^2 u_0. 
\]

\begin{proposition}\label{pro:15}
Let $u_0,v_0 \in X^3 (\mathbb{R})$ be such that
\[
v_0 = \partial u_0 + \frac{i}{2} u_0 |u_0|^2.  
\]
Then the associated solution $(u,v) \in C((-T_{min} ,T^{max}), X^3(\R) \times X^3(\R))$ obtained in Proposition \ref{pro:10} satisfies for all $t \in (-T_{min} , T^{max})$ the differential identity
\[
v = \partial u + \frac{i}{2} |u|^2 u .
\]
\end{proposition}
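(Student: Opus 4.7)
The plan is to define $w := v - \partial u - \tfrac{i}{2}|u|^2 u$, which by hypothesis satisfies $w(0)=0$ and lies in $C((-T_{min},T^{max}),X^2(\R))$. I would derive a closed evolution equation for $w$ and apply an $L^2$ energy estimate plus Gr\"onwall to conclude $w\equiv 0$.

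To obtain the equation, I compute $Lw = Lv - \partial(Lu) - \tfrac{i}{2}L(|u|^2 u)$, substitute $Lu=P_1(u,v)$ and $Lv=P_2(u,v)$, and expand $L(|u|^2 u)$ with the same Leibniz identity used in Proposition \ref{pro:from-eq-to-syst}. The $\partial^2\bar u$ terms coming from $L(|u|^2 u)$ and from $\partial(Lu)$ cancel exactly as in the proof of that proposition, and writing $v = (\partial u + \tfrac{i}{2}|u|^2 u) + w$ so that the $w$-independent portion vanishes (again by Proposition \ref{pro:from-eq-to-syst}), one is left, after systematic cancellations, with
\[
Lw = 2i\bar u\,(\partial u)\,w + i\bar u\,w^2 + 2iu\,(\partial u)\,\bar w + iu^2\,\partial\bar w,
\]
whose coefficients all lie in $C((-T_{min},T^{max}),L^\infty(\R))$ since $u\in X^3$ ensures $u,\partial u\in L^\infty$. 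Multiplying by $\bar w$, integrating and taking imaginary parts then gives formally
\[
\tfrac{1}{2}\tfrac{d}{dt}\|w\|_{L^2}^2 = \Im\int_\R \bar w\,Lw\,dx.
\]
The three algebraic terms are immediately bounded by $C\|w\|_{L^2}^2(1+\|w\|_{L^\infty})$; the delicate contribution $\Im\int_\R iu^2\bar w\,\partial\bar w\,dx = \tfrac{1}{2}\Im\int_\R iu^2\partial(\bar w^2)\,dx$ becomes, after integration by parts, $-\Im\int_\R iu(\partial u)\bar w^2\,dx$, which is again of zeroth order in $w$. Using that $\|w\|_{L^\infty}\leq\|w\|_{X^2}$ is locally bounded, one obtains $\tfrac{d}{dt}\|w\|_{L^2}^2 \leq C(t)\|w\|_{L^2}^2$, and Gr\"onwall combined with $w(0)=0$ forces $w\equiv 0$.

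The main obstacle is that a priori $w\in X^2(\R)\subset L^\infty(\R)$ is not known to belong to $L^2(\R)$, so the energy identity above is formal. I would make it rigorous by approximation: pick $u_0^n = u_0+\varphi_n$ with $\varphi_n\in H^3(\R)$ tending to $0$, set $v_0^n := \partial u_0^n + \tfrac{i}{2}|u_0^n|^2 u_0^n$, and note that the associated solutions $(u^n, v^n)$ converge to $(u,v)$ in $X^3$ by the continuity statement in Proposition \ref{pro:10}. A standard Schr\"odinger well-posedness argument in $u_0+H^2$ for the system \eqref{eq:5}---in the spirit of the developments that follow in the next section---yields $w^n:=v^n-\partial u^n - \tfrac{i}{2}|u^n|^2 u^n \in H^2\subset L^2$ on a common subinterval, with $w^n(0)=0$. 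The rigorous energy estimate applies to each $w^n$ and forces $w^n\equiv 0$, and passing to the $X^2$-limit completes the proof.
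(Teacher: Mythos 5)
Your algebraic derivation is exactly the paper's: you define the same difference $w$, obtain the same closed equation
\[
Lw = 2i\overline{u}(\partial u)w + i\overline{u}w^2 + 2iu(\partial u)\overline{w} + iu^2\partial\overline{w},
\]
and handle the delicate term $iu^2\overline{w}\,\partial\overline{w}$ by the same device, writing it as $\tfrac12 iu^2\partial(\overline{w}^2)$ and integrating by parts. You also correctly identify the one real difficulty, namely that $w\in X^2(\R)\subset L^\infty(\R)$ is not known to be square-integrable, so the global energy identity is only formal. Up to this point the proposal matches the paper.

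The gap is in your rigorization step. Approximating by $u_0^n=u_0+\varphi_n$ with $\varphi_n\in H^3$ cannot deliver $w^n\in L^2$. Indeed, the standard perturbation theory you invoke gives (at best) $u^n-u,\ v^n-v\in H^2$, whence $w^n-w\in H^1\subset L^2$; therefore $w^n\in L^2$ if and only if $w\in L^2$, which is precisely what is unknown --- the argument is circular. Nor is the system \eqref{eq:5} well posed in $u_0+H^2$ in the sense you need: the nonlinearities $P_1,P_2$ evaluated at bounded, non-decaying functions are bounded but not square-integrable (take $u\equiv v\equiv 1$), so the Duhamel term immediately leaves $u_0+H^2$; this is exactly why Section 3 of the paper replaces \eqref{eq:transform} by the modified transform \eqref{eq:transform2} before working in $\phi+H^k$. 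The paper closes the argument differently: it never integrates $|w-v|^2$ over all of $\R$, but instead runs the energy estimate on the localized quantity $g_n(t)=\norm{(w-v)\sqrt{\chi_n}}_{L^2}^2$ with $\chi_n(x)=\chi(x/n)$ and $|\chi'|^2\lesssim\chi$, which is finite because $w-v$ is merely bounded. The commutator terms produced by $\partial\chi_n$ are of size $O(n^{-1/2})$ (Cauchy--Schwarz on $[-2n,2n]$), Gr\"onwall yields $g_n(t)\leq C(R)n^{-1/2}$, and letting $n\to\infty$ forces $w\equiv v$ pointwise. You would need to replace your approximation step by this (or an equivalent) localization to complete the proof.
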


\begin{proof}
  Given $(u,v) \in C((-T_{min} ,T^{max}), X^3(\R) \times X^3(\R))$ the solution of \eqref{eq:5} obtained in Proposition \ref{pro:10}, we define 
  \[
w = \partial u + \frac{i}{2}|u|^2 u.
\]
Our goal will be to show that $w=v$. 
We first have
\begin{align*}
Lu &= -iu^2 \overline{v} + \frac{1}{2}|u|^4u \\
&= -iu^2(\overline{v} - \overline{w}) - iu^2 \overline{w}  + \frac{1}{2} |u|^4u \\
&= -iu^2(\overline{v} - \overline{w}) - iu^2 \partial\overline{u}.   
\end{align*}
Applying $L$ to $w$ and using \eqref{eq:L-u-cube} and the expression previously obtained for $Lu$, we get
\begin{align*}
L w &= \partial (Lu) + \frac{i}{2} L(|u|^2u)  \\
%&= \partial (Lu) + \frac{i}{2} \left( 2L(u)|u|^2 + 2\overline{u}(\partial u)^2 + u^2 L(\overline{u}) + 4u|\partial u|^2 \right) - \frac{i}{2} L(u) \\
&= \partial (Lu) + \frac{i}{2} \left(2 Lu |u|^2 + 2 \overline{u} (\partial u)^2 + 2 u^2 \partial^2\overline{u} - u^2 \overline{Lu} + 4u|\partial u|^2\right)  \\
&= \partial(-iu^2(\overline{v} - \overline{w}) - iu^2 \partial\overline{u}) + \frac{i}{2} \left(2(-iu^2 \partial\overline{u})|u|^2 + 2\overline{u}(\partial u)^2 - u^2 \overline{(-iu^2\partial\overline{u})}+ 2u^2 \partial^2\overline{u} + 4u|\partial u|^2 \right)  \\
&\quad + \frac{i}{2} \left[ 2 (-i u^2 (\overline{v} - \overline{w})) |u|^2 - u^2 \overline{(-iu^2(\overline{v} - \overline{w}))}\right]  \\
&= \left( -i\partial(u^2(\overline{v} - \overline{w})) + u^2|u|^2(\overline{v} - \overline{w}) + \frac{1}{2}|u|^4 (v-w) \right) \\
               &\quad + \left( -i\partial(u^2\partial\overline{u}) +
             u^2\partial\overline{u}|u|^2 + i\overline{u}(\partial u)^2 +\frac12|u|^4\partial u + iu^2 \partial^2\overline{u} + 2iu|\partial u|^2             
\right)   \\
&=: I_1 + I_2.
\end{align*}
As in the proof of Proposition \ref{pro:from-eq-to-syst}, we obtain
\[
I_2 = i\overline{u}w^2 + \frac{3}{2}|u|^4 w + |u|^2u^2\overline{w}. 
\]
Furthermore
\begin{align*}
I_1 &= \partial(-iu^2(\overline{v} - \overline{w})) + u^2|u|^2(\overline{v} - \overline{w}) + \frac{1}{2}|u|^4 (v-w)  \\
&= -iu^2\partial(\overline{v} - \overline{w}) - 2iu\partial u (\overline{v} - \overline{w}) + u^2|u|^2(\overline{v} - \overline{w}) + \frac{1}{2}|u|^4(v-w).
\end{align*}
It follows that
\begin{align}
Lw -Lv &= I_1 + (I_2 - Lv) \\
&=I_1 + i\overline{u}(w-v)(w+v)+\frac{3}{2}|u|^4(w-v)+|u|^2u^2(\overline{w}-\overline{v}) \\
&= (w-v) A_1 + (\overline{w} - \overline{v}) A_2 - iu^2 \partial(\overline{v} - \overline{w}),\label{NN}
\end{align}
where $A_1$ and $A_2$ are polynomials of degree at most $4$ in  $u$, $\partial u$, $v$, $\partial v$ and their complex conjugates.
Hence,
\begin{align} \label{eq:20}
(Lw - Lv)(\overline{w} -\overline{v}) &=|w -v|^2 A_1 + (\overline{w} - \overline{v})^2A_2 - iu^2 \frac{\partial(\overline{v} - \overline{w})^2}{2} := K,
\end{align}
where $K$ is a polynomial of degree at most $6$ in  $u$, $v$, $w$, $\partial u$, $\partial v$, $\partial w$ and their complex conjugates.
Remembering that $L = i\partial_t + \partial^2$, and taking imaginary part in the two sides of \eqref{eq:20} we obtain 
\begin{align}\label{eq:30}
\frac{1}{2} \partial_t |w -v|^2 + \Im(\partial\left((\partial w - \partial v)(\overline{w} - \overline{v})\right)) &= \Im(K).
\end{align}   
Let $\chi:\R\to\R$ be a cut-off function such that
\begin{equation*}
\chi \in  C^1(\R) , \quad
\operatorname{supp}(\chi) \subset [-2,2] ,\quad
\chi \equiv 1 \, \text{on} \, (-1,1) ,\quad
0 \leq \chi \leq 1 ,\quad
|\chi'(x)|^2 \lesssim \chi(x)\, \text{for all}\, x \in \mathbb{R}.
\end{equation*}
% \begin{equation*}
% \begin{cases}
% \chi \in C^1(\R) , \\
% supp \chi \subset B(0,2) ,\\
% \chi = 1 \, \text{on} \, (-1,1) ,\\
% 0 \leq \chi \leq 1 ,\\
% |\chi'(x)|^2 \lesssim \chi \forall \, x \in \mathbb{R}.
% \end{cases}
% \end{equation*}
For each $n \in \mathbb{N}$, define
\[
\chi_n(x) = \chi\left(\frac{x}{n}\right).
\]
Multiplying the two sides of \eqref{eq:30} with $\chi_n$ and integrating in space we obtain 
\begin{equation}
  \label{eq:40}
\frac{1}{2} \partial_t\norm{(w - v)\sqrt{\chi_n}}_{L^2}^2 + \int_{\mathbb{R}} \Im\left(\partial\left((\partial w - \partial v)(\overline{w} - \overline{v})\right)\right) \chi_n dx = \int_{\mathbb{R}}\Im(K) \chi_n  dx.
\end{equation}
For the right hand side, we have
\begin{equation*}
\int_{\mathbb{R}}\Im(K) \chi_n  dx = \Im\int_{\mathbb{R}} |w - v|^2A_1 \chi_n  dx + \Im\int_{\mathbb{R}} (\overline{w} - \overline{v})^2 A_2\chi_n  dx -Im \int_{\mathbb{R}}iu^2\frac{\partial((\overline{v} - \overline{w})^2)}{2} \chi_n  dx,
\end{equation*}
and therefore
\[
\left|\int_{\mathbb{R}}\Im(K) \chi_n  dx\right|\leq \norm{(w - v)\sqrt{\chi_n}}_{L^2}^2 \left( \norm{A_1}_{L^{\infty}} + \norm{A_2}_{L^{\infty}}\right) + \frac12 \left|\int_{\mathbb{R}} u^2 \partial((\overline{v} - \overline{w})^2) \chi_n dx\right|. 
  \]
We now fix some arbitrary interval $[-T_1,T_2]$ such that $0\in[-T_1,T_2] \subset (-T_{min},T^{max})$  in which we will be working from now on, and we set
\[
R = \norm{u}_{L^{\infty}([T_1,T_2],X^3)} + \norm{v}_{L^{\infty}([T_1,T_2],X^3)}.
\]
From the fact that $A_1$ and $A_2$ are  polynomials in  $u, \partial u$, $v$, $\partial v$ of degree at most  $4$, for all $t \in [T_1,T_2]$ we have
\[
\norm{A_1}_{L^{\infty}} + \norm{A_2}_{L^{\infty}} \leq C(R).
\]   
It follows that
\begin{align*}
\left|\int_{\mathbb{R}}\Im(K) \chi_n  dx\right| \leq \norm{(w -v)\sqrt{\chi_n}}_{L^2}^2 C(R) +\frac{1}{2} \left|\int_{\mathbb{R}}(\overline{v}-\overline{w})^2 \left(\partial (u^2)\chi_n + u^2 \partial\chi_n)  dx \right) \right|.
\end{align*}
By definition of $\chi$ we have
\begin{align*}
\left|\partial(u^2)\chi_n \right| &\leq C(R)\chi_n ,\\
\left|u^2 \partial\chi_n\right|&\leq |u^2| \frac{1}{n} \left|\chi'\left(\frac{\cdot}{n}\right)\right| \leq \frac{1}{n} C(R) \sqrt{\chi\left(\frac{\cdot}{n}\right)} \leq C(R) \frac{1}{n} \sqrt{\chi_n(.)}. 
\end{align*}
Hence,
\begin{align}
\left|\int_{\mathbb{R}}\Im(K) \chi_n  dx\right| &\leq\norm{(w -v)\sqrt{\chi_n}}_{L^2}^2 C(R) + \frac{C(R)}{n}\left|\int_{\R}(\overline{v}-\overline{w})^2 \sqrt{\chi_n}  dx \right| \nonumber\\
& \leq C(R) \norm{(w -v)\sqrt{\chi_n}}^2_{L^2} +\frac{C(R)^2}{n} \int_{\R}|v-w|\sqrt{\chi_n}  dx \nonumber\\
& \leq C(R)\norm{(w -v)\sqrt{\chi_n}}^2_{L^2}+ \frac{C(R)^2}{n} \int_{-2n}^{2n}|v-w|\sqrt{\chi_n}  dx \nonumber\\
& \leq C(R)\norm{(w -v)\sqrt{\chi_n}}^2_{L^2}+ \frac{C(R)^2}{n} \left(\int_{-2n}^{2n}(|v-w|\sqrt{\chi_n})^2  dx \right)^{\frac{1}{2}} \left(\int_{-2n}^{2n} dx\right)^{\frac{1}{2}} \nonumber\\
& \leq C(R)\norm{(w -v)\sqrt{\chi_n}}^2_{L^2}+ \frac{2C(R)^2}{\sqrt{n}}\norm{(w -v)\sqrt{\chi_n}}_{L^2}. \label{eq:50}
\end{align} 
In addition, we have
\begin{align}
\left|\int_{\mathbb{R}} \Im(\partial\left((\partial w -\partial v) (\overline{w} - \overline{v})\right)\chi_n)  dx\right| 
&= \left|\int_{\mathbb{R}} \Im(\left((\partial w -\partial v) (\overline{w} - \overline{v})\right)\chi_n')  dx\right| \nonumber \\
&= \left|\int_{\mathbb{R}} \Im\left((\partial w - \partial v) (\overline{w} - \overline{v})\frac{1}{n}\chi'\left(\frac{x}{n}\right)\right)dx\right| \nonumber\\
  & \leq \int_{\mathbb{R}} |\partial w - \partial v| |w - v| \frac{1}{n} \sqrt{\chi_n}  dx \nonumber\\
  & \leq \frac{1}{n}\norm{\partial w - \partial v}_{L^2} \norm{(w-v)\sqrt{\chi_n}}_{L^2} \nonumber\\
  &\leq \frac{C(R)}{n} \norm{(w-v)\sqrt{\chi_n}}_{L^2}.  \label{eq:60}
\end{align}
% \begin{align*}
% \left|\int_{\mathbb{R}} \Im(\partial\left((\partial w -\partial v) (\overline{w} - \overline{v})\right)\chi_n)  dx\right| 
% &= \left|\int_{\mathbb{R}}\Im\left((\partial w - \partial v)\left[\partial(\overline{w} - \overline{v})\chi_n + (\overline{w} - \overline{v})\partial\chi_n\right]\right)  dx\right| \\
% &= \left|\int_{\mathbb{R}} Im\left((\partial w - \partial v) (\overline{w} - \overline{v})\frac{1}{n}\chi'\left(\frac{x}{n}\right)\right)\right| \\
% & \leq \int_{\mathbb{R}} |\partial w - \partial v| |w - v| \frac{1}{n} \sqrt{\chi_n}  dx.
% \end{align*}
% where the final estimate given by
% \[
% \left|\chi'\left(\frac{x}{n}\right)\right| \leq \sqrt{\chi\left(\frac{x}{n}\right)} = \sqrt{\chi_n(x)}
% \]
% We obtain that
% \begin{align}
% \left|J\right| & \leq \int_{\mathbb{R}} |\partial w - \partial v||w-v| \sqrt{\chi_n} \frac{1}{n}  dx \\
% & \lesssim \frac{1}{n}\norm{\partial w - \partial v}_{L^2} \norm{(w-v)\sqrt{\chi_n}}_{L^2} \\
% & \lesssim \frac{1}{n}\norm{(w-v)\sqrt{\chi_n}}_{L^2}.  
% \label{eq:60}
% \end{align}
From \eqref{eq:40}, \eqref{eq:50}, \eqref{eq:60} we obtain that
\begin{align}
\partial_t\norm{(w - v)\sqrt{\chi_n}}^2_{L^2} &\leq C(R) \norm{(w -v)\sqrt{\chi_n}}^2_{L^2} + \frac{C(R)}{\sqrt{n}} \norm{(w -v)\sqrt{\chi_n}}_{L^2} \\
&\leq C(R)\norm{(w-v)\sqrt{\chi_n}}^2_{L^2} + \frac{C(R)}{\sqrt{n}} \label{eq:70}  
\end{align} 
where we have used the Cauchy inequality $|x| \leq \frac{|x|^2+1}{2}$. Define the function $g:[-T_1,T_2]$ by
\[
g= \norm{(w-v)\sqrt{\chi_n}}^2_{L^2}.
\]
Then by definition of $w$ we have $g(t=0) = 0$. Furthermore, from \eqref{eq:70} we have
\[
\partial_t g \leq C(R) g + \frac{C(R)}{\sqrt{n}}.
\] 
By Gronwall inequality for all $t \in [-T_1,T_2]$ we have
\begin{equation}\label{eq:80}
g \leq \frac{C(R)}{\sqrt{n}} \exp(C(R)(T_2+T_1)) \leq \frac{C(R)}{\sqrt{n}}.
\end{equation}
Assume by contradiction that there exist $t$ and $x$ such that
\[
w(t,x)\neq v(t,x).
\]
By continuity of $v$ and $w$, there exists $\eps>0$ such that (for $n>|x|$) we have
\[
 g(t)= \norm{(w - v)\sqrt{\chi_n}}^2_{L^2}>\eps.
\]
Since $\eps>0$ is independant of $n$, we obtain a contradiction with \eqref{eq:80} when $n$ is large enough. Therefore for all $t$ and $x$, we have
\[
v(t,x)=w(t,x),
\]
which concludes the proof.
% We set
% \[
% \Pi = \left\{x \in \mathbb{R} \text{such that} |w-v|>0 \right\}
% \]
% Let $x \in \Pi$ and set $\varepsilon = |w-v|$. Moreover all function in $X^k(\mathbb{R})$ have representation uniformly continous then  there exists $\delta >0$ such that for all $y \in B(x,\delta)$ we have  
% \[
% |w-v|(y) > \frac{\varepsilon}{2}
% \]
% It follows, from \eqref{eq:80} that
% \begin{align*}
% \frac{C(R)}{\sqrt{n}} \geq \int_{\mathbb{R}} |w-v|^2 \chi_n  dx \geq \int_{B(x,\delta)}|w-v|^2 \chi_n  dx \geq \left(\frac{\varepsilon}{2}\right)^2 \mu(B(x,\delta)) =\left(\frac{\varepsilon}{2}\right)^2 \mu(B(0,\delta)) := C,
% \end{align*}
% for all $n>2\delta + |x|$, because $\chi_n(y) = 1$ for $|y| < n$.\\
% Let $n \to +\infty$, we have the contracdition.
% Therefore for all $t \in [T_1,T_2]$ we have
% \[
% w = \partial v + \frac{i}{2} (|v|^2 -1)v + 1.
% \] 
% By the fact that $[T_1,T_2]$ is arbitrait interval on $(-T_{min},T^{max})$ we obtain the result for all $t \in (-T_{min},T^{max})$.
\end{proof}

\subsection{From the system to the equation}
\label{sec:from-system-equation}

With  Proposition \ref{pro:15} in hand, we give the proof of Theorem \ref{local well posed on Zhidkov space}.
\begin{proof}[Proof of Theorem \ref{local well posed on Zhidkov space}.]
  We start by defining $v_0$ by
  \[
v_0 = \partial u_0 + \frac{i}{2} |u_0|^2 u_0  \in X^3(\mathbb{R}).
\]
From Proposition \ref{pro:10} there exists a unique maximal solution $(u,v) \in C((-T_{min},T^{max}), X^3(\mathbb{R})\times X^3(\R)) \cap C^1((-T_{min},T^{max}),X^1(\mathbb{R})\times X^1(\R))$ of the system \eqref{eq:5}   associated with $(u_0,v_0)$. From Proposition \ref{pro:15}, for all $t \in (-T_{min},T^{max})$ we have 
  \begin{equation}
v=\partial u + \frac{i}{2} |u|^2u.
\label{eq:4}
\end{equation}
It follows that 
\[
Lu = -iu^2 \overline{v}+ \frac{1}{2}|u|^4u = -iu^2\partial\overline{u},
\]
and therefore $u$ is a solution of \eqref{eq:1} on $(-T_{min},T^{max})$. Furthermore 
\[
u \in C((-T_{min},T^{max}), X^3(\R)) \cap C^1((-T_{min},T^{max}),X^1(\R)).
\]
To obtain the desired regularity on $u$, we observe that, since $v$ has the same regularity as $u$, and verifies \eqref{eq:4}, we have
% On the other hand, since 
% \[
% v \in C((-T_{min},T^{max}), X^3) \cap C^1((-T_{min},T^{max}),X^1)
% \]
% we have
\[
\partial u = v-\frac{i}{2}|u|^2u \in C((-T_{min},T^{max}), X^3(\R)) \cap C^1((-T_{min},T^{max}),X^1(\R))
\]
This implies that
\[
u \in C((-T_{min},T^{max}), X^4(\R)) \cap C^1((-T_{min},T^{max}),X^2(\R)).
\]
This proves the existence part of the result. Uniqueness is a direct consequence from Proposition \ref{pro:from-eq-to-syst} and Proposition \ref{pro:10}.

% For the uniqueness of solution of equation \eqref{eq:1}, assume $\tilde u \in C((-T_{min},T^{max}), X^4) \cap C^1((-T_{min},T^{max}),X^2)$ is other solution of equation \eqref{eq:1}. Set
% \[
% w = \partial \tilde u + \frac{i}{2}  \tilde u(| \tilde u|^2-1)+1
% \]
% We obtain that $( \tilde u),  w \in C((-T_{min},T^{max}), X^3) \cap C^1((-T_{min},T^{max}),X^1)$ is solution of system \eqref{eq:5}. Since $(u,v) \in C((-T_{min},T^{max}), X^3) \cap C^1((-T_{min},T^{max}),X^1) $ is also solution of system \eqref{eq:5} and the uniqueness solution of system \eqref{eq:5} we obtain that
% \[
% u \equiv \tilde u
% \]  
To prove the blow-up alternative, assume that $T^{max}<\infty$. Then from Proposition \ref{pro:10} we have
\[
\lim_{t \to T^{max}}(\norm{u(t)}_{X^3(\mathbb{R})} + \norm{v(t)}_{X^3(\mathbb{R})}) = \infty
\] 
On the other hand, from the differential identity \eqref{eq:4} we obtain
\[
\lim_{t \to T^{max}}(\norm{u(t)}_{X^3(\mathbb{R})} + \norm{\partial u(t)}_{X^3(\mathbb{R})}) = \infty.
\]
It follows that
\[
\lim_{t \to T^{max}}\norm{u(t)}_{X^4(\mathbb{R})} = \infty.
\]
Finally, we establish the continuity with respect to the initial data.
Take a subinterval $[T_1,T_2] \subset (-T_{min},T^{max})$, and a sequence $(u^n_0) \in X^4(\R)$ such that  $u^n_0 \to u_0$ in $X^4$. Let $u_n$ be the solution of \eqref{eq:1} associated with $u_0^n$ and define $v_n$ by
  \begin{equation}
v_n = \partial u_n + \frac{i}{2}|u_n|^2u_n .\label{eq:7}
\end{equation}
By Proposition \ref{pro:10} the couple $(u_n,v_n)$ is the unique maximal solution of system \eqref{eq:5} in 
\[
 C((-T_{min},T^{max}), X^3(\R)\times X^3(\R)) \cap  C^1((-T_{min},T^{max}),X^1(\R)\times X^1(\R)).
 \]
Moreover, we have 
\begin{equation}\label{eq:100}
\lim_{n \to +\infty} \left(\norm{u_n-u}_{L^{\infty}([T_1,T_2],X^3)} + \norm{v_n-v}_{L^{\infty}([T_1,T_2],X^3)}\right) = 0
\end{equation}
Since $v$ and $v_n$ verify the differential identity \eqref{eq:7}, we have
\begin{equation*}
\partial(u_n-u) = (v_n-v) - \frac{i}{2} \left(|u_n|^2u_n - |u|^2 u\right).
\end{equation*}
Therefore we have
\[
\lim_{n \to +\infty} \norm{u_n-u}_{L^{\infty}([T_1,T_2],X^4)} = 0,
\]
which completes the proof.
\end{proof}

\section{Results on the space $\phi+H^k(\mathbb{R})$ for $\phi \in X^k(\mathbb{R})$}
\label{section 2}

In this section, we give the proof of Theorem \ref{thm local well posedness on phi plus H2 space} and Theorem \ref{local well posedness on phi plus H1 space}. For $k \geq 1$, let $\phi \in X^k(\R)$.

\subsection{The local well posedness on $\phi+H^2(\R)$}

\subsubsection{From the equation to the system}
\label{sec:from-equation-system-1}

\begin{proposition}
  \label{pro:from-eq-to-syst-2}
  If $u$ is a solution of \eqref{eq:1} then for $v$ defined by
  \begin{equation}
    \label{eq:8}
v = \partial u + \frac{i}{2}u(|u|^2-|\phi|^2)+\phi.
\end{equation}
the couple $(u,v)$ verifies the system
\begin{equation}
  \label{eq:m10}
  \left\{
\begin{aligned}
Lu&=Q_1(u,v,\phi) ,\\
Lv&= Q_2(u,v,\phi) ,
\end{aligned}
\right.
\end{equation}
where $Q_1$ and $Q_2$ are given by
\begin{equation}
  \label{eq:9}
  \begin{aligned}
    Q_1(u,v,\phi)&=-iu^2(\overline{v}-\overline{\phi})+\frac{1}{2}u|u|^2(|u|^2-|\phi|^2),\\
    Q_2(u,v,\phi)&=\partial^2\phi + u^2|u|^2(\overline{v}-\overline{\phi}) +i\overline{u}\left((v-\phi)^2-i(v-\phi)u(|u|^2-|\phi|^2)\right) \\
&+\frac{1}{2}|u|^4(v-\phi)-\frac{1}{2}u^2|\phi|^2(\overline{v}-\overline{\phi})-\frac{i}{2}\partial^2(|\phi|^2)u-i\partial(|\phi|^2)(v-\phi)-\frac{i}{2}u(|u|^2-|\phi|^2).
  \end{aligned}
\end{equation}

The functions $Q_1(u,v,\phi)$ and $Q_2(u,v,\phi)$ are polynomials of degree at most $5$ in $u,v,\phi,\partial\phi,\partial^2\phi$. Set $\tilde u= u-\phi$, $\tilde v=v-\phi$. The functions $u,v$ are solutions of the system \eqref{eq:m10} if and only if the functions $\tilde u,\tilde v$ are solutions of the following system
\begin{equation}
  \label{eq:m20}
  \left\{
\begin{aligned}
L\tilde u &= \tilde Q_1(\tilde u,\tilde v,\phi),  \\
L\tilde v &= \tilde Q_2(\tilde u,\tilde v,\phi),  
\end{aligned}
\right.
\end{equation}
where
\begin{align*}
\tilde Q_1(\tilde u,\tilde v,\phi)&:= Lu-L\phi= Q_1(u,v,\phi)-\partial^2\phi = Q_1(\tilde u+\phi,\tilde v+\phi,\phi) -\partial^2\phi,\\
\tilde Q_2(\tilde u,\tilde v,\phi)&:= Lv-L\phi= Q_2(u,v,\phi)-\partial^2\phi.
\end{align*}
\end{proposition}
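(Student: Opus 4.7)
The approach follows the pattern already used in Proposition \ref{pro:from-eq-to-syst}, suitably adapted to accommodate the non-zero profile $\phi$ through the modified transform \eqref{eq:8}. The strategy has three steps: first, use the definition of $v$ to express $\partial\overline u$ in terms of $u, v, \phi$ and substitute into \eqref{eq:1} to obtain the first equation; second, apply $L$ to $v$ directly using the Leibniz-type identities \eqref{eq:L-on-product} together with the fact that $\phi$ is time-independent (so $L\phi = \partial^2\phi$), and eliminate every remaining second derivative of $u$ using \eqref{eq:1}, again substituting $\partial\overline u$ via \eqref{eq:8}; finally, deduce the reformulation in terms of $\tilde u = u - \phi$ and $\tilde v = v - \phi$.

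For the first equation, conjugating \eqref{eq:8} gives $\partial\overline u = \overline v - \overline\phi + \tfrac{i}{2}\overline u(|u|^2 - |\phi|^2)$. Plugging into $Lu = -i u^2 \partial\overline u$ and distributing yields $Q_1(u,v,\phi) = -iu^2(\overline v - \overline\phi) + \tfrac{1}{2} u|u|^2(|u|^2 - |\phi|^2)$, which is manifestly polynomial of degree at most $5$. For the second equation, since $[L,\partial]=0$ and $L\phi = \partial^2\phi$, we expand
\[
Lv = \partial(Lu) + \tfrac{i}{2}\, L(|u|^2 u) - \tfrac{i}{2}\, L(u|\phi|^2) + \partial^2\phi.
\]
The term $L(|u|^2 u)$ is handled exactly as in the derivation of \eqref{eq:L-u-cube}; the new term is obtained from one further application of \eqref{eq:L-on-product}, yielding $L(u|\phi|^2) = |\phi|^2 Lu + u\,\partial^2(|\phi|^2) + 2\partial u\,\partial(|\phi|^2)$. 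After inserting \eqref{eq:1}, the occurrences of $\partial^2 \overline u$ cancel, exactly as in the zero-boundary case. Once each remaining $\partial u$ is replaced using \eqref{eq:8}, one is left with a polynomial expression in $u, v, \phi, \partial\phi, \partial^2\phi$ (and their complex conjugates) of total degree at most $5$; a regrouping along the natural combinations $(\overline v - \overline\phi)$, $(v-\phi)$ and $(|u|^2 - |\phi|^2)$ produces the precise form of $Q_2$ stated in \eqref{eq:9}.

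The equivalence with the shifted system \eqref{eq:m20} is then an immediate consequence of the linearity of $L$ together with $L\phi = \partial^2 \phi$: $L\tilde u = Lu - \partial^2\phi$ and $L\tilde v = Lv - \partial^2\phi$, so $(u,v)$ solves \eqref{eq:m10} if and only if $(\tilde u,\tilde v)$ solves \eqref{eq:m20} with the $\tilde Q_i$ as defined. The main obstacle in this plan is the bookkeeping in the derivation of $Q_2$: one must systematically eliminate all $u$-derivatives of order $\geq 2$ by invoking \eqref{eq:1}, and then rearrange the surviving terms so that the answer is both manifestly polynomial of the claimed degree and takes the structured form written in \eqref{eq:9}. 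Apart from this algebraic bookkeeping, no analytical difficulty arises, since the argument is a purely formal manipulation valid for the $X^k$-regularity assumed throughout the section.
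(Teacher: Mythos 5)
Your proposal follows exactly the paper's own argument: derive the first equation by substituting $\partial\overline u$ from \eqref{eq:8} into \eqref{eq:1}; compute $Lv=\partial(Lu)+\tfrac{i}{2}L(|u|^2u)-\tfrac{i}{2}L(|\phi|^2u)+\partial^2\phi$ via \eqref{eq:L-on-product} and \eqref{eq:i-L-u-cube}; observe the cancellation of $\partial^2\overline u$; substitute $\partial u$ via \eqref{eq:8}; and pass to $(\tilde u,\tilde v)$ by linearity of $L$ together with $L\phi=\partial^2\phi$. The only part you leave implicit --- the final regrouping showing that the leftover terms of the form $\tfrac{i}{2}u|u|^4(|u|^2-|\phi|^2)-\tfrac{i}{4}u|u|^2(|u|^2-|\phi|^2)^2-\tfrac{i}{4}u|u|^4(|u|^2-|\phi|^2)-\tfrac{i}{4}u|u|^2|\phi|^2(|u|^2-|\phi|^2)$ cancel to zero, yielding exactly \eqref{eq:9} --- is precisely the explicit algebraic verification the paper carries out, so the proof is correct and essentially identical in approach.
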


\begin{proof}

Since   $u$ is a solution of \eqref{eq:1} we have 
\begin{equation*}
Lu = -iu^2\partial\overline{u} = -iu^2\left(\overline{v}+\frac{i}{2}\overline{u}(|u|^2-|\phi|^2)-\overline{\phi}\right) 
= -iu^2(\overline{v}-\overline{\phi})+\frac{1}{2}u|u|^2(|u|^2-|\phi|^2),
\end{equation*}
which gives us the first equation in \eqref{eq:m10}. 
On the other hand, applying $L$ to $v$, we obtain
\begin{align} 
Lv &= \partial(Lu) + \frac{i}{2}L(|u|^2u)-\frac{i}{2}L(|\phi|^2u) + L(\phi) \\
&= \partial(-iu^2\partial\overline{u})+\frac{i}{2}L(u^2\overline{u})-\frac{i}{2}L(|\phi|^2u)+ \partial^2\phi \\
& = -2iu|\partial u|^2 - iu^2\partial^2\overline{u}+\partial^2\phi+\frac{i}{2}L(u^2\overline{u})-\frac{i}{2}L(|\phi|^2u). \label{eq:m1}
\end{align}
As in the proof of Proposition \ref{pro:from-eq-to-syst}, we use \eqref{eq:L-on-product} to get
\begin{align}
L(|\phi|^2u) &= L(u)|\phi|^2+L(|\phi|^2)u+2\partial(|\phi|^2)\partial u \\
&= |\phi|^2(-iu^2\partial\overline{u})+\partial^2(|\phi|^2)u+2\partial u \partial(|\phi|^2).\label{eq:m2}
\end{align}
Recall from \eqref{eq:i-L-u-cube} in the proof of Proposition \eqref{eq:1} that
\begin{equation*}
\frac{i}{2} L(|u|^2u) = u^2 \partial\overline{u} |u|^2 + i \overline{u} (\partial u)^2 + i u^2 \partial^2 \overline{u} + \frac{1}{2} \partial u |u|^4 + 2i u |\partial u|^2.
\end{equation*}
Combining the previous identities, we obtain
\begin{align*}
Lv &= -2iu|\partial u|^2-iu^2\partial^2\overline{u}+\partial^2\phi + u^2\partial\overline{u}|u|^2 + i\overline{u}(\partial u)^2 \\&+iu^2\partial^2\overline{u}+\frac{1}{2}\partial u|u|^4+2iu|\partial u|^2 -\frac{i}{2}(-iu^2\partial\overline{u}|\phi|^2+\partial^2(|\phi|^2)u+2\partial u\partial(|\phi|^2))  \\
   &= \partial^2\phi +u^2\partial\overline{u}|u|^2 + i\overline{u}(\partial u)^2+ \frac{1}{2}|u|^4\partial u-\frac{1}{2}u^2\partial\overline{u}|\phi|^2-\frac{i}{2}\partial^2(|\phi|^2)u-i\partial(|\phi|^2)\partial u.
\end{align*}
Using the differential identity \eqref{eq:8}, we get
     \begin{align*}
Lv &=\partial^2\phi +u^2|u|^2\left(\overline{v}-\overline{\phi}+\frac{i}{2}\overline{u}(|u|^2-|\phi|^2)\right) + i\overline{u}\left(v-\phi-\frac{i}{2}u(|u|^2-|\phi|^2)\right)^2\\
&+\frac{1}{2}|u|^4\left(v-\phi-\frac{i}{2}u(|u|^2-|\phi|^2)\right) - \frac{1}{2}u^2|\phi|^2\left(\overline{v}-\overline{\phi}+\frac{i}{2}\overline{u}(|u|^2-|\phi|^2)\right)-\frac{i}{2}\partial^2(|\phi|^2)u \\
&-i\partial(|\phi|^2)\left(v-\phi-\frac{i}{2}u(|u|^2-|\phi|^2)\right) \\
&= \bigg(\partial^2\phi + u^2|u|^2(\overline{v}-\overline{\phi}) +i\overline{u}\left((v-\phi)^2-i(v-\phi)u(|u|^2-|\phi|^2)\right) \\
&+\frac{1}{2}|u|^4(v-\phi)-\frac{1}{2}u^2|\phi|^2(\overline{v}-\overline{\phi})-\frac{i}{2}\partial^2(|\phi|^2)u-i\partial(|\phi|^2)(v-\phi)-\frac{i}{2}u(|u|^2-|\phi|^2)\bigg) \\
&+ \bigg(\frac{i}{2}u|u|^4(|u|^2-|\phi|^2)-\frac{i}{4}u|u|^2(|u|^2-|\phi|^2)^2-\frac{i}{4}u|u|^4(|u|^2-|\phi|^2) \\
       &-\frac{i}{4}u|u|^2|\phi|^2(|u|^2-|\phi|^2)\bigg).
     \end{align*}
     Observing that
     \begin{equation*}
\frac{i}{2}u|u|^4(|u|^2-|\phi|^2)-\frac{i}{4}u|u|^2(|u|^2-|\phi|^2)^2-\frac{i}{4}u|u|^4(|u|^2-|\phi|^2) 
-\frac{i}{4}u|u|^2|\phi|^2(|u|^2-|\phi|^2)= 0,
\end{equation*}
we obtain the second equation in \eqref{eq:m10}.
% Hence,
% \[
% Lv=J_1:=Q_2(u,v,\phi)
% \]
% We obtain the following system equation of $u,v$
% \begin{equation}
% \label{eq:m10}
% \begin{cases}
% Lu= -iu^2(\overline{v}-\overline{\phi})+\frac{1}{2}u|u|^2(|u|^2-|\phi|^2)=Q_1(u,v,\phi) ,\\
% Lv= Q_2(u,v,\phi) ,\\
% u(0)=u_0,v(0)=v_0.
% \end{cases}
% \end{equation}
% where $Q_1(u,v,\phi)$ is a polynomial of degree small or equal $5$ of $u,v,\phi$ and $Q_2(u,v,\phi)$ is a polynomial of $u,v,\phi,\partial\phi,\partial^2\phi$.
% Set $\tilde u= u-\phi$, $\tilde v=v-\phi$. The functions $u,v$ are the solutions of the system \eqref{eq:m10} if only if the functions $\tilde u,\tilde v$ are solutions of the following system
% \begin{equation}
% \label{eq:m20}
% \begin{cases}
% L\tilde u = Lu-\partial^2\phi= Q_1(u,v,\phi)-\partial^2\phi = Q_1(\tilde u+\phi,\tilde v+\phi,\phi) -\partial^2\phi := \tilde Q_1(\tilde u,\tilde v,\phi),  \\
% L\tilde v = Lv-\partial^2\phi = Q_2(u,v,\phi)-\partial^2\phi := \tilde Q_2(\tilde u,\tilde v,\phi),  \\
% \tilde u(0) = \tilde u_0:=u_0-\phi,\tilde v(0) = \tilde v_0:=v_0-\phi.
% \end{cases}
% \end{equation}
\end{proof}

\subsubsection{Resolution of the system}
\label{sec:resolution-system-1}

From similar arguments to the one used for the proof of Proposition \ref{pro:10}, we may obtain the following local well-posedness result.
\begin{proposition}\label{pro:m207}
  Let $k \geq 1$, $\phi \in X^{k+2}$, $\tilde u_0,\tilde v_0 \in H^k(\mathbb{R})$.
There exist $T_{min},T^{max} > 0$ and a unique maximal solution $(\tilde u,\tilde v)$ of the system \eqref{eq:m20} such that
$\tilde u,\tilde v \in C((-T_{min},T^{max}),H^k(\mathbb{R})) \cap C^1((-T_{min},T^{max}),H^{k-2}(\mathbb{R}))$.  Furthermore the following properties are satisfied.
\begin{itemize}
\item \emph{Blow-up alternative.} If $T^{max}$ (resp. $T_{min}$)$< +\infty$ then
\[
\lim_{t \to T^{max}\text{(resp. } T_{min}\text{)}}(\norm{\tilde u}_{H^k}+\norm{\tilde v}_{H^k}) = \infty.
\]
\item \emph{Continuity with respect to the initial data.} If $\tilde u^n_0,\tilde v^n_0 \in H^k(\mathbb{R})$  are such that
  \[
    \norm{\tilde u^n_0-\tilde u_0}_{H^k}+\norm{\tilde v^n_0-\tilde v_0}_{H^k} \to 0
  \]
  then  for any subinterval $[T_1,T_2] \subset (-T_{min},T^{max})$ the associated solution $(\tilde u^n,\tilde v^n)$ of \eqref{eq:m20} satisfies
\[
\lim_{n \to +\infty}\left(\norm{\tilde u^n-\tilde u}_{L^{\infty}([T_1,T_2],H^k)}+\norm{\tilde v^n-\tilde v}_{L^{\infty}([T_1,T_2],H^k)}\right) = 0.
\]
\end{itemize}
\end{proposition}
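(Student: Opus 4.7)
The plan is to mimic the proof of Proposition \ref{pro:10}: rewrite \eqref{eq:m20} as a Duhamel integral equation, check that the right-hand side is locally Lipschitz on $H^k \times H^k$, and then invoke the classical fixed-point machinery of \cite[Theorem 4.3.4 and Theorem 4.3.7]{CaHa98}. More precisely, consider $A = i\partial^2$ acting on $H^{k-2}(\mathbb{R})$ with domain $H^k(\mathbb{R})$ (suitably interpreted for $k=1$); it generates the Schr\"odinger group $S(t)$ on $H^{k-2}$ that preserves $H^k$. A pair $(\tilde u,\tilde v)\in C((-T_{min},T^{max}),H^k\times H^k)\cap C^1((-T_{min},T^{max}),H^{k-2}\times H^{k-2})$ solves \eqref{eq:m20} if and only if it solves the Duhamel equation
\[
(\tilde u,\tilde v)(t)=S(t)(\tilde u_0,\tilde v_0)-i\int_0^t S(t-s)(\tilde Q_1,\tilde Q_2)(\tilde u,\tilde v,\phi)(s)\,ds.
\]

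The main step is to show that the map $(\tilde u,\tilde v)\mapsto(\tilde Q_1,\tilde Q_2)(\tilde u,\tilde v,\phi)$ is Lipschitz continuous on bounded sets of $H^k(\mathbb{R})\times H^k(\mathbb{R})$ into itself. For this I would use two facts. First, $H^k(\mathbb{R})$ is a Banach algebra for $k\geq 1$. Second, since $\phi\in X^{k+2}$, we have $\phi\in L^\infty$ and $\partial^j\phi\in H^{k+2-j}\hookrightarrow L^\infty\cap H^k$ for $j=1,2$; in particular multiplication by $\phi$, by $\partial\phi$, or by $\partial^2\phi$ maps $H^k$ boundedly into $H^k$. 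Substituting $u=\tilde u+\phi$, $v=\tilde v+\phi$ in the explicit polynomial expressions \eqref{eq:9}, every monomial is a product of at most five factors taken from $\{\tilde u,\tilde v,\phi,\partial\phi,\partial^2\phi\}$ and their complex conjugates, so the above two facts yield both the range $H^k$ and the local Lipschitz bound by a polynomial in the $H^k$-norms of $\tilde u,\tilde v$ with coefficients depending on $\|\phi\|_{X^{k+2}}$.

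A small point requiring care is the treatment of those terms in $\tilde Q_1,\tilde Q_2$ that survive when $\tilde u=\tilde v=0$, i.e.\ the pure-$\phi$ part. All the terms in $Q_1$ contain a factor $\overline v-\overline\phi$ or $|u|^2-|\phi|^2$ and thus vanish at $\tilde u=\tilde v=0$, so $\tilde Q_1(0,0,\phi)=-\partial^2\phi$, which lies in $H^k$. In $\tilde Q_2=Q_2-\partial^2\phi$ the only pure-$\phi$ contribution comes from $-\tfrac{i}{2}\partial^2(|\phi|^2)\phi$, and since $\partial^2(|\phi|^2)=2|\partial\phi|^2+2\Re(\overline\phi\,\partial^2\phi)\in H^k$ by the algebra/multiplier properties above, its product with the $L^\infty$-multiplier $\phi$ again belongs to $H^k$.

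Once the local Lipschitz property of the nonlinearity is established, the contraction mapping argument on a small ball of $C([-T,T],H^k\times H^k)$ produces a unique local solution. Standard extension, continuation, and continuous-dependence arguments from \cite[Theorem 4.3.4 and Theorem 4.3.7]{CaHa98} then yield the maximal solution, the blow-up alternative, and the continuity with respect to the initial data exactly as in the proof of Proposition \ref{pro:10}. The main (and essentially only) obstacle is the bookkeeping for the Lipschitz bound on the fairly long expression \eqref{eq:9}, but once it is organized monomial by monomial via the Banach algebra property of $H^k$ and the multiplier property of $X^{k+2}$, no new analytic difficulty appears.
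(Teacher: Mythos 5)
Your proposal is correct and follows exactly the route the paper intends: the paper gives no separate proof of Proposition \ref{pro:m207} beyond the remark that it follows ``from similar arguments to the one used for the proof of Proposition \ref{pro:10}'', i.e.\ the Duhamel reformulation, local Lipschitz continuity of $(\tilde Q_1,\tilde Q_2)$ on bounded sets of $H^k\times H^k$, and the standard theory of \cite[Theorem 4.3.4 and Theorem 4.3.7]{CaHa98}. Your additional care with the Banach algebra property of $H^k$, the multiplier role of $\phi,\partial\phi,\partial^2\phi\in X^{k+2}$, and the pure-$\phi$ terms that survive at $\tilde u=\tilde v=0$ is precisely the bookkeeping the paper leaves implicit.
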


% \begin{proof}
% We have $\tilde u,\tilde v \in C((-T_{min},T^{max}),H^k(\mathbb{R})) \cap C^1((-T_{min},T^{max}),H^{k-2}(\mathbb{R}))$ are the solutions of the system \eqref{eq:m20} if only if $\tilde u,\tilde v \in C((-T_{min},T^{max}),H^k(\mathbb{R}))$ are the solutions of the following equations
% \begin{equation}
% \label{eq:m100}
% (\tilde u,\tilde v) = S(t)(\tilde u_0,\tilde v_0) -i\int_0^t S(t-s)\tilde Q(\tilde u,\tilde v,\phi)(s) \, ds.
% \end{equation}
% where $\tilde Q(\tilde u,\tilde v,\phi) = \left(\tilde Q_1(\tilde u,\tilde v,\phi),\tilde Q_2(\tilde u,\tilde v,\phi)\right)$. From $\phi \in X^{k+2}$ and the fact that $\tilde Q_1,\tilde Q_2$ are the polynomial of $u,v,\phi,\partial \phi,\partial^2\phi$ we have $\tilde Q(.,.,\phi)$ is Lipchitz continuous on bounded set of $H^k(\mathbb{R})\times H^k(\mathbb{R})$ in $H^k(\mathbb{R})\times H^k(\mathbb{R})$. By using similarly the arguments in the proposition \ref{pro:10} we obtain the desired results.
% \end{proof}

\subsubsection{Preservation of a differential identity}
\label{sec:pres-diff-ident-1}

Given well-posedness of the system \eqref{eq:m10}, we need to show preservation of the differential identity to go back to \eqref{eq:1}. This is the object of the following proposition.

\begin{proposition}
\label{cor:pvt}
Let $\phi \in X^4(\R)$ and $\tilde u_0, \tilde v_0 \in H^2(\mathbb{R})$ such that the condition
\begin{equation}
\label{condition new}
\tilde v_0 = \partial\tilde u_0+\frac{i}{2}(\tilde u_0+\phi)(|\tilde u_0+\phi|^2-|\phi|^2)+\partial\phi
\end{equation}
is verified. Then the associated solutions $\tilde u,\tilde v$ obtained in Proposition \ref{pro:m207} also satisfy \eqref{condition new} for all $t \in (-T_{min},T^{max})$.   
\end{proposition}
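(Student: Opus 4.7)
The plan is to adapt the argument from Proposition \ref{pro:15} to the $H^2$ setting, taking advantage of the $L^2$ structure to drop the cut-off procedure. In the original variables $u=\tilde u+\phi$, $v=\tilde v+\phi$, condition \eqref{condition new} is equivalent to the identity \eqref{eq:8}, namely $v=\partial u+\frac{i}{2}u(|u|^2-|\phi|^2)+\phi$. I would define
\[
w := \partial u + \frac{i}{2} u(|u|^2-|\phi|^2) + \phi,
\]
so that by assumption $w-v=0$ at $t=0$, and the goal is to prove $w\equiv v$ on $(-T_{min},T^{max})$, which gives \eqref{condition new} back at every time.

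The first step is to express $Lu$ in terms of $w$ and the error $v-w$. From the first equation in \eqref{eq:m10} and the trivial identity $\bar w-\bar\phi=\partial\bar u-\frac{i}{2}\bar u(|u|^2-|\phi|^2)$ one obtains, after simplification, $Lu=-iu^2(\bar v-\bar w)-iu^2\partial\bar u$, which is the exact analog of the formula derived at the start of the proof of Proposition \ref{pro:15}. The second, heavier step is to compute $Lw$ by applying $L$ to the definition of $w$, using $[L,\partial]=0$ together with the Leibniz rules \eqref{eq:L-on-product} on $L(u|u|^2)$, $L(u|\phi|^2)$, $L\phi$, and plugging in the expression for $Lu$ just obtained. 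This mirrors the computation carried out in Proposition \ref{pro:from-eq-to-syst-2}, except that every occurrence of $\partial u$ produced along the way is replaced by $w-\phi-\frac{i}{2}u(|u|^2-|\phi|^2)+(v-w)-(v-w)$, so the resulting expression for $Lw$ equals $Q_2(u,v,\phi)$ plus a remainder that vanishes whenever $w=v$. Subtracting the second equation of \eqref{eq:m10} yields
\[
L(w-v) \;=\; A_1\,(w-v) \;+\; A_2\,\overline{(w-v)} \;-\; i u^2\,\partial\overline{(v-w)},
\]
where $A_1,A_2$ are polynomials of bounded degree in $u,\partial u,v,\phi,\partial\phi,\partial^2\phi$. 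Under the standing regularity $\tilde u,\tilde v\in H^2$, $\phi\in X^4$, Sobolev embedding $H^1\hookrightarrow L^\infty$ makes $A_1,A_2$ uniformly bounded on $[T_1,T_2]\times\R$ for any subinterval $[T_1,T_2]\subset(-T_{min},T^{max})$ by a constant $C(R)$ depending on $R:=\|\tilde u\|_{L^\infty([T_1,T_2],H^2)}+\|\tilde v\|_{L^\infty([T_1,T_2],H^2)}+\|\phi\|_{X^4}$.

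The third step is the $L^2$ energy estimate. One checks directly from the regularity of $u,v,\phi$ that $w-v\in H^1$, so multiplying the identity for $L(w-v)$ by $\overline{w-v}$, taking imaginary parts, integrating in $x\in\R$, and using $\int\Im(\partial^2(w-v)\cdot\overline{w-v})\,dx=0$, one obtains
\[
\tfrac12\partial_t\|w-v\|_{L^2}^2 \;=\; \Im\!\int_\R \Bigl[A_1|w-v|^2 + A_2(\overline{w-v})^2 - i u^2 \partial\overline{(v-w)}\cdot\overline{(w-v)}\Bigr]dx.
\]
The first two terms on the right are bounded by $C(R)\|w-v\|_{L^2}^2$. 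For the last term, writing $\partial\overline{(v-w)}\cdot\overline{(w-v)}=-\tfrac12\partial[(\overline{w-v})^2]$ and integrating by parts (legitimate since $w-v\in H^1$), one reduces it to $\tfrac12\int \partial(u^2)(\overline{w-v})^2\,dx$, which is again controlled by $C(R)\|w-v\|_{L^2}^2$ because $\partial(u^2)=2u\partial u\in L^\infty$. Combined with the vanishing initial condition $\|w-v\|_{L^2}^2(0)=0$, Gronwall's inequality forces $w\equiv v$ on $(-T_{min},T^{max})$, which is the claimed preservation of \eqref{condition new}.

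The main obstacle is the algebra in step two: one must organize the expansion of $Lw$ so that every undesired quantity, in particular every remaining $\partial^2 \bar u$ and every higher-order cross term, cancels exactly against the corresponding piece of $Q_2(u,v,\phi)$, leaving only terms of the three benign types $A_1(w-v)$, $A_2\overline{(w-v)}$, and $-iu^2\partial\overline{(v-w)}$. This is the same kind of cancellation that happens in Proposition \ref{pro:from-eq-to-syst-2}, but with more bookkeeping because of the $\phi$, $\partial\phi$, $\partial^2\phi$ contributions. Everything else is lighter than in Proposition \ref{pro:15}: the $H^2$ framework makes $w-v\in H^1$ automatic, so no cut-off function and no truncated Gronwall argument are needed.
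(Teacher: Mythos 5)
Your proposal is correct and follows essentially the same route as the paper: the paper likewise defines $w=\partial u+\frac{i}{2}u(|u|^2-|\phi|^2)+\phi$, derives $L(w-v)=-iu^2\partial(\overline{v}-\overline{w})+A(v-w)+B(\overline{v}-\overline{w})$ by reorganizing the computation of Proposition \ref{pro:from-eq-to-syst-2}, and closes with the same $L^2$ energy estimate, integration by parts on the $-iu^2\partial\overline{(v-w)}$ term, and Gronwall, with no cut-off needed precisely because $\tilde w-\tilde v\in L^2(\R)$. Your observation that the localizing function of Proposition \ref{pro:15} becomes unnecessary in this setting matches the paper exactly.
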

\begin{proof}
We define 
%Proposition \ref{cor:pvt} is a direct consequence of the following proposition. 

%\begin{proposition}\label{pro:new 001}
 % Let $k \geq 3$, $\phi \in X^{k+2}(\mathbb{R})$, $u_0,v_0 \in X^k(\mathbb{R})$ such that the condition
%  \[
 %   v_0=\partial u_0 + \frac{i}{2}u_0(|u_0|^2-|\phi|^2)+\phi
  %\]
  %holds. Then the associated solution $u,v \in C((-T_{min},T^{max}),X^k(\mathbb{R})) \cap C^1((-T_{min},T^{max}),X^{k-2}(\mathbb{R}))$ of the system  \eqref{eq:m10} verifies also for all $t \in (-T_{min},T^{max})$ the condition
%\[
%v=\partial u+\frac{i}{2}u(|u|^2-|\phi|^2)+\phi.
%\]
%\end{proposition}

%\begin{proof}

\begin{equation}\label{eq:diff-id-w-2}
\tilde w = \partial \tilde u+\frac{i}{2}(\tilde u+\phi)(|\tilde u+\phi|^2-|\phi|^2)+\partial \phi.
\end{equation}
%We want to prove that $v=w$. 
Set $u=\tilde u+\phi$, $v=\tilde v+\phi$, $w = \tilde w+\phi$. We have
\[
w=\partial u+\frac{i}{2}u(|u|^2-|\phi|^2)+\phi.
\]
Since $\tilde u,\tilde v$ is a solution of \eqref{eq:m20}, we have$u,v$ is a solution of \eqref{eq:m10}. 
\begin{equation*}
Lu=-iu^2(\overline{v}-\overline{w}) - i u^2(\overline{w} - \overline{\phi}) +\frac{1}{2}u|u|^2(|u|^2-|\phi|^2) = -iu^2(\overline{v}-\overline{w}) + H,
\end{equation*}
where we have defined
\[
H=- i u^2(\overline{w} - \overline{\phi}) +\frac{1}{2}u|u|^2(|u|^2-|\phi|^2) .
\]
Applying $L$ to $w$ and using \eqref{eq:L-u-cube} and the previously expression obtained for $Lu$, we get

\begin{align*}
Lw &= \partial(Lu)+\frac{i}{2}L(|u|^2u)-\frac{i}{2}L(|\phi|^2u)+L(\phi) \\
&= \partial(Lu) + \frac{i}{2}\left(2L(u)|u|^2+2\overline{u}(\partial u)^2 + 2u^2\partial^2\overline{u}- u^2\overline{L(u)}+4u|\partial u|^2 \right)\\
& - \frac{i}{2}\left(|\phi|^2L(u)+u\partial^2(|\phi|^2) + 2\partial u \partial(|\phi|^2)\right) \\
&=  \partial\left(-iu^2(\overline{v}-\overline{w})\right)+\partial H \\ 
&+\frac{i}{2} \left(2H|u|^2-2iu^2|u|^2(\overline{v}-\overline{w})+2\overline{u}(\partial u)^2+2u^2\partial^2\overline{u}-u^2\left(i\overline{u}^2(v-w)+\overline{H}\right)+4u|\partial u|^2\right) \\
&-\frac{i}{2}\left(-iu^2(\overline{v}-\overline{w})|\phi|^2+|\phi|^2H+u\partial^2(|\phi|^2)+2 \partial u\partial(|\phi|^2)\right) \\
&= -i \partial \left(u^2(\overline{v}-\overline{w})\right) +u^2|u|^2(\overline{v}-\overline{w})+\frac{1}{2}|u|^4(v-w)-\frac{1}{2}u^2(\overline{v}-\overline{w}) + K,
\end{align*}
where $K$ depends on $u$, $w$ and $\phi$ but not on $v$ and is given by
\[
K = \partial^2\phi+u^2\partial\overline{u}|u|^2+i\overline{u}(\partial u)^2+\frac{1}{2}|u|^4\partial u-\frac{1}{2}u^2\partial\overline{u}|\phi|^2-i\partial(|\phi|^2)\partial u.
\]
Using the differential identity \eqref{eq:diff-id-w-2} to replace $\partial u$, we obtain for $K$ the following
%
 %By the fact that $w$ can performance in term of $u$, so do $K$ and by the same previous calculations we have
\begin{align*}
K&=\partial^2\phi +u^2|u|^2\left(\overline{w}-\overline{\phi}+\frac{i}{2}\overline{u}(|u|^2-|\phi|^2)\right) + i\overline{u}\left(w-\phi-\frac{i}{2}u(|u|^2-|\phi|^2)\right)^2\\
&+\frac{1}{2}|u|^4\left(w-\phi-\frac{i}{2}u(|u|^2-|\phi|^2)\right) - \frac{1}{2}u^2|\phi|^2\left(\overline{w}-\overline{\phi}+\frac{i}{2}u(|u|^2-|\phi|^2)\right)-\frac{i}{2}\partial^2(|\phi|^2)u \\
&-i\partial(|\phi|^2)\left(w-\phi-\frac{i}{2}u(|u|^2-|\phi|^2)\right) \\
&= \partial^2\phi + u^2|u|^2(\overline{w}-\overline{\phi})+i\overline{u}\left((w-\phi)^2-i(w-\phi)u(|u|^2-|\phi|^2)\right) \\
&+\frac{1}{2}|u|^4(w-\phi)-\frac{1}{2}|\phi|^2|u|^2(\overline{w}-\overline{\phi})-\frac{i}{2}\partial^2(|\phi|^2)u-i\partial(|\phi|^2)(w-\phi)-\frac{i}{2}u(|u|^2-|\phi|^2).
\end{align*}  
As a consequence, we arrive  for $L(w)-L(v)$ at the following expression:
\begin{align*}
Lw - Lv &= -i \partial \left(u^2(\overline{v}-\overline{w})\right) +u^2|u|^2(\overline{v}-\overline{w})+\frac{1}{2}|u|^4(v-w)-\frac{1}{2}u^2(\overline{v}-\overline{w}) + (K-L(v))\\
&= -iu^2\partial(\overline{v}-\overline{w})+A(v-w)+B(\overline{v}-\overline{w}),
\end{align*}
where $A,B$ are polynomials in $u,v,\phi,\partial\phi,\partial^2\phi$. It implies that
\begin{equation}\label{eq new new}
L (\tilde w-\tilde v) = -i(\tilde u+\phi)^2\partial(\overline{\tilde v} -\overline{\tilde w})+A(\tilde v-\tilde w)+B(\overline{\tilde v} -\overline{\tilde w}).
\end{equation}
Multiplying two sides of \eqref{eq new new} by $\overline{\tilde w}-\overline{\tilde v}$, taking the imaginary part, and integrating over space with integration by part for the first term of right hand side of \eqref{eq new new}, we obtain
\[
\frac{d}{dt}\norm{\tilde w-\tilde v}^2_{L^2} \lesssim (\norm{\tilde u+\phi}_{L^{\infty}}+\norm{\partial\tilde u+\partial \phi}_{L^{\infty}}+\norm{A}_{L^{\infty}}+\norm{B}_{L^{\infty}})\norm{\tilde w-\tilde v}^2_{L^2}.
\]
By Gr\"onwall's inequality we obtain 
\[
\norm{\tilde w-\tilde v}^2_{L^2} \leq \norm{\tilde{w}(0)-\tilde{v}(0)}^2_{L^2} \times exp(C\int_0^t(\norm{\tilde u+\phi}_{L^{\infty}}+\norm{\partial\tilde u+\partial \phi}_{L^{\infty}}+\norm{A}_{L^{\infty}}+\norm{B}_{L^{\infty}}) \,ds).
\] 
Using the fact that $\tilde{w}(0)=\tilde{v}(0)$, we obtain $\tilde w=\tilde v$, for all $t$. It implies that
\[
 \tilde v=\partial \tilde u+\frac{i}{2}(\tilde u+\phi)(|\tilde u+\phi|^2-|\phi|^2)+\partial\phi.
 \]
This complete the proof of Proposition \ref{cor:pvt}. 
\end{proof}

\subsubsection{From the system to the equation}
\label{sec:from-system-equation-1}

With local well-posedness of the system and preservation of the differential identity in hand, we may now go back to the original equation and finish the proof of Theorem \ref{thm local well posedness on phi plus H2 space}.

\begin{proof}[Proof of Theorem \ref{thm local well posedness on phi plus H2 space}.]
Let $\phi \in X^4(\R)$. We define $v_0\in X^1(\R)$, $\tilde u_0\in H^2(\R)$ and $\tilde v_0\in H^1(\R)$ in the following way:
\[
  v_0=\partial u_0+\frac{i}{2}u_0(|u_0|^2-|\phi|^2)+\phi,\quad \tilde u_0=u_0-\phi,\quad \text{ and } \tilde v_0=v_0-\phi .
\]
We have
\[
\tilde v_0 =\partial \tilde u_0+\frac{i}{2}(\tilde u_0+\phi)(|\tilde u_0+\phi|^2-|\phi|^2)+\partial\phi.
\]
From Proposition \ref{pro:m207} there exists a unique maximal solution $\tilde u,\tilde v \in C((-T_{min},T^{max}),H^1(\mathbb{R})) \cap C^1((-T_{min},T^{max}),H^{-1}(\mathbb{R}))$ of \eqref{eq:m20}. 
%By Proposition \ref{proconserve}, we have 
Let $\tilde u^n_0 \in H^3(\mathbb{R})$ be such that
\[
  \norm{\tilde u^n_0-\tilde u_0}_{H^2(\mathbb{R})} \to 0
\]
as $n \to \infty$. Define $\tilde v^n_0 \in H^2(\mathbb{R})$ by 
\[
\tilde v^n_0 = \partial \tilde u^n_0 +\frac{i}{2}(\tilde u^n_0+\phi)(|\tilde u^n_0+\phi|^2-|\phi|^2)+\partial\phi.
\]
From Proposition \ref{pro:m207}, there exists a unique solution maximal solution.
\[
 \tilde u^n,\tilde v^n \in C((-T^n_{min},T_{max}^n),H^2(\mathbb{R})) \cap C^1((-T^n_{min},T^n_{max}),L^2(\mathbb{R}))
\]
of the system \eqref{eq:m20}. Let $[-T_1,T_2] \subset (-T_{min},T^{max})$ be any closed interval. From \cite[proposition 4.3.7]{CaHa98}, for $n \geq N_0$ large enough, we have $[-T_1,T_2] \subset (-T^n_{min},T^n_{max})$. By Proposition \ref{cor:pvt}, for $n \geq N_0$, $t \in [-T_1,T_2]$, we have
\[
\tilde v^n = \partial \tilde u^n+\frac{i}{2}(\tilde u^n+\phi)(|\tilde u^n+\phi|^2-|\phi|^2)+\partial\phi.
\]  
By Pproposition \ref{pro:m207}, we have
\[
\lim_{n \to +\infty}\sup_{t\in[T_1,T_2]}(\norm{\tilde u^n(t)-\tilde u(t)}_{H^1(\mathbb{R})}+\norm{\tilde v^n-\tilde v(t)}_{H^1(\mathbb{R})}) \to 0.
\] 
Letting $n \to +\infty$, we obtain that for all $t \in [-T_1,T_2]$, and then for all $t \in (-T_{min},T^{max})$:

\[
\tilde v = \partial\tilde u+\frac{i}{2}(\tilde u+\phi)(|\tilde u+\phi|^2-|\phi|^2)+\partial\phi.
\]
It follows that
\[
\partial\tilde u \in C((-T_{min},T^{max}),H^1(\mathbb{R})) \cap C^1((-T_{min},T^{max}),H^{-1}(\mathbb{R})).
\]
Hence we have
\[
\tilde u \in C((-T_{min},T^{max}),H^2(\mathbb{R})) \cap C^1((-T_{min},T^{max}),L^2(\mathbb{R}))
\]
Define $u\in \phi+ C((-T_{min},T^{max}),\phi+H^2(\mathbb{R})) \cap C^1((-T_{min},T^{max}),L^2(\mathbb{R}))$ by
\[
u=\phi+u.
\]
and define $v\in \phi+C((-T_{min},T^{max}),\phi+H^1(\mathbb{R})) \cap C^1((-T_{min},T^{max}),H^{-1}(\mathbb{R}))$ by
\[
v= \tilde v+\phi= \partial u+\frac{i}{2}u(|u|^2-|\phi|^2)+\phi.
\]
Since $\tilde u,\tilde v$ are solution of system \eqref{eq:m20},  $u,v$ are solutions of the system \eqref{eq:m10}. Therefore, 
\[
Lu=Q_1(u,v)=Q_1\left(u,\partial u+\frac{i}{2}u(|u|^2-|\phi|^2)+\phi\right)=-iu^2\partial\overline{u}.
\]
This establishes the existence of a solution to \eqref{eq:1}. To prove uniqueness, assume that $U \in \phi+C((-T_{min},T^{max}),H^2(\mathbb{R})) \cap C^1((-T_{min},T^{max}),L^2(\mathbb{R}))$ is another solution of \eqref{eq:1}. Set $V=\partial U+\frac{i}{2}U(|U|^2-|\phi|^2)+\phi$, and $\tilde U = U-\phi$, $\tilde V=V-\phi$. We see that $\tilde U,\tilde V \in C((-T_{min},T^{max}),H^1(\mathbb{R})) \cap C^1((-T_{min},T^{max}),H^{-1}(\mathbb{R}))$ are solutions of the system \eqref{eq:m20}. From the uniqueness statement in Proposition \ref{pro:m207} we obtain $\tilde U=\tilde u$. Hence, $u=U$, which proves uniqueness.
The blow-up alternative and continuity with respect to the initial data are proved using similar arguments as in the proof of Theorem \ref{local well posed on Zhidkov space}. This completes the proof of Theorem \ref{thm local well posedness on phi plus H2 space}.
\end{proof}

\subsection{The local well posedness on $\phi+H^1(\R)$}
In this section, we give the proof of Theorem \ref{local well posedness on phi plus H1 space}, using the method of Hayashi and Ozawa \cite{HaOz94}. As in Section \ref{sec:from-equation-system-1}, we work with the system \eqref{eq:m20}.

\subsubsection{Resolution of the system}
Since we are working in the less regular space $\phi+H^1(\R)$, we cannot use Proposition \ref{pro:m207}. Instead, we establish the following result using Strichartz estimate. 
\begin{proposition}
\label{pro:n1}
Consider the system \eqref{eq:m20}. Let $\phi \in X^2(\mathbb{R})$, $\tilde u_0,\tilde v_0 \in L^2(\mathbb{R})$. There exists $R>0$ such that if $\norm{\tilde u_0}_{L^2}+\norm{\tilde v_0}_{L^2}<R$ then there exists $T>0$ and a unique solution $\tilde u,\tilde v$ of the system \eqref{eq:m20} verifiying
\[
\tilde u,\tilde v \in C([-T,T],L^2) \cap L^4([-T,T],L^{\infty}).
\]
Moreover, we have the following continuous dependence on initial data property: If $(\widetilde{u}^n_0,\widetilde{v}^n_0) \in L^2(\R) \times L^2(\R)$  is a sequence such that $\norm{\widetilde{u}^n_0}_2+\norm{\widetilde{v}^n_0}_2<R$ and $\norm{\widetilde{u}^n_0-u_0}_2+\norm{\widetilde{v}^n_0-v_0}_2 \rightarrow 0$ then the associated solutions $(\tilde{u}^n,\tilde{v}^n)$ such that   
\[
\norm{\tilde{u}^n-\tilde u}_{L^{\infty}L^2 \cap L^4L^{\infty}}+\norm{\tilde{v}^n-\tilde v}_{L^{\infty}L^2 \cap L^4L^{\infty}} \rightarrow 0,
\]
where $T$ is the time of existence of $\tilde u$, $\tilde v$ and we have used the following notation:
\[
  L^{\infty}L^2=L^{\infty}([-T,T],L^2(\mathbb{R})),\quad L^4L^{\infty} = L^4([-T,T],L^{\infty}(\mathbb{R}))
\]
and the norm on  $L^{\infty}L^2 \cap L^4L^{\infty}$ is defined, as usual for  the intersection of two Banach spaces, as the sum of the norms on each space.
\end{proposition}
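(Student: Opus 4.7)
The plan is to run a Banach fixed point argument on the Duhamel reformulation of the system \eqref{eq:m20}, working in the Strichartz space $Y_T := L^{\infty}([-T,T],L^2(\R)) \cap L^4([-T,T],L^{\infty}(\R))$. In one space dimension the pair $(4,\infty)$ is Strichartz admissible, which together with the dual pair $(1,2)$ produces the estimates
\[
\norm{S(t)w_0}_{Y_T} \leq C\norm{w_0}_{L^2},\qquad \left\|\int_0^t S(t-s)f(s)\,ds\right\|_{Y_T} \leq C\norm{f}_{L^1([-T,T],L^2(\R))}.
\]
I would then introduce $\Phi=(\Phi_1,\Phi_2)$ defined componentwise by
\[
\Phi_j(\tilde u,\tilde v)(t) = S(t)w_{j,0} - i\int_0^t S(t-s)\tilde Q_j(\tilde u,\tilde v,\phi)(s)\,ds,\quad (w_{1,0},w_{2,0})=(\tilde u_0,\tilde v_0),
\]
and look for a fixed point of $\Phi$ in a suitably small closed ball $B_r \subset Y_T\times Y_T$ centred at the origin.

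The crux of the argument is the control of $\tilde Q_1(\tilde u,\tilde v,\phi)$ and $\tilde Q_2(\tilde u,\tilde v,\phi)$ in $L^1([-T,T],L^2(\R))$. By Proposition \ref{pro:from-eq-to-syst-2} these are polynomials of degree at most $5$ in $\tilde u,\tilde v$ with coefficients depending on $\phi,\partial\phi,\partial^2\phi$; since $\phi\in X^2(\R)$, Sobolev embedding gives $\phi,\partial\phi\in L^{\infty}(\R)$, while $\partial^2(|\phi|^2)$ must be split as $2|\partial\phi|^2\in L^{\infty}$ plus $2\Re(\overline{\phi}\partial^2\phi)\in L^2$ and carried through the estimates accordingly. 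Every monomial appearing in $\tilde Q_1,\tilde Q_2$ then falls into one of three categories:
\begin{itemize}
\item source-type terms depending only on $\phi$, bounded in $L^1_T L^2_x$ by $C(\phi)T$;
\item mixed terms with at least one $\phi$-factor and between one and four factors of $\tilde u,\tilde v$, controlled through H\"older in time by quantities of the form $C(\phi)T^{\alpha}\norm{\tilde u}_{L^4 L^{\infty}}^{j}\norm{\tilde v}_{L^{\infty}L^2}^{k}$ with $\alpha>0$;
\item the pure quintic terms in $\tilde u,\tilde v$, typified by $\tilde u|\tilde u|^4$, satisfying
\[
\norm{\tilde u|\tilde u|^4}_{L^1_T L^2_x} \leq \norm{\tilde u}_{L^4 L^{\infty}}^4\norm{\tilde u}_{L^{\infty}L^2},
\]
with no prefactor in $T$.
\end{itemize}
Summing all contributions, the total nonlinear term is bounded by an expression of the form $C(\phi)T^{\alpha}(1+r+\dots+r^4)+Cr^5$. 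Choosing first $r$ small enough so that the quintic contribution is dominated by $r/2$, and then $T$ small depending on $\phi$ and $r$, one gets that $\Phi$ sends $B_r$ into itself and is a strict contraction; the Banach fixed point theorem delivers the unique solution. Continuous dependence on the initial data follows by applying the same difference estimate to $(\tilde u^n-\tilde u,\tilde v^n-\tilde v)$.

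The main obstacle I anticipate is precisely the pure quintic term: unlike every other contribution it carries no power of $T$, and therefore cannot be tamed by shortening the time interval. This is what forces the genuine smallness hypothesis $\norm{\tilde u_0}_{L^2}+\norm{\tilde v_0}_{L^2}<R$, since via the free evolution estimate $\norm{S(t)\tilde u_0}_{L^4 L^{\infty}}\leq C\norm{\tilde u_0}_{L^2}$ the smallness of the data propagates to smallness of $\norm{\tilde u}_{L^4 L^{\infty}}$ inside $B_r$, ensuring that the $r^5$ term remains below the Strichartz threshold. Reconciling the two smallness parameters (radius $r$ and time $T$) and keeping careful track of which of the many monomials in $\tilde Q_1,\tilde Q_2$ requires which is the technical heart of the proof.
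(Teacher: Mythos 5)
Your proposal follows essentially the same route as the paper: a contraction mapping argument for the Duhamel formulation of \eqref{eq:m20} in the ball $X_{T,R}$ of $L^{\infty}([-T,T],L^2)\cap L^4([-T,T],L^{\infty})$, using the one-dimensional Strichartz estimates for the admissible pair $(4,\infty)$ and its dual $(1,2)$, with the smallness of the data needed precisely because the pure quintic monomials in $\tilde Q_1,\tilde Q_2$ carry no positive power of $T$. Your accounting of the monomials (source terms in $T$, mixed terms in $T^{\alpha}$, quintic terms in $r^5$) and the closing of the continuous-dependence estimate match the paper's argument, so the proof is correct and not meaningfully different.
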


\begin{proof}
  
Let $\widetilde{Q}_1, \widetilde{Q}_2$ as in system \eqref{eq:m20}. By direct calculations, we have
\begin{align}
\tilde Q_1(\tilde u,\tilde v,\phi) &=  -i(\tilde u+\phi)^2\overline{\tilde v} +\frac{1}{2}(\tilde u+\phi)|(\tilde u+\phi)|^2(|(\tilde u+\phi)|^2-|\phi|^2)-\partial^2\phi \label{eq:haha}
,\\ 
\tilde Q_2(\tilde u,\tilde v,\phi) &= (\tilde u+\phi)^2|\tilde u+\phi|^2\overline{\tilde v}+i(\overline{\tilde u}+\overline{\phi})\left((\tilde v)^2-i\tilde v(\tilde u+\phi)(|\tilde u+\phi|^2-|\phi|^2)\right) \nonumber \\
&+\frac{1}{2}|\tilde u+\phi|^4\tilde v-\frac{1}{2}(\tilde u+\phi)|\phi|^2\overline{\tilde v}-\frac{i}{2}\partial^2(|\phi|^2)(\tilde u+\phi)-i\partial(|\phi|^2)\tilde v \nonumber \\
&-\frac{i}{2}(\tilde u+\phi)(|\tilde u+\phi|^2-|\phi|^2).
\label{eq:hehe}
\end{align}
Consider the following problem 
\begin{equation}
\label{eq:n1}
(\tilde u,\tilde v) = S(t)(\tilde u_0,\tilde v_0) - i \int_0^t S(t-s)\tilde Q(\tilde u,\tilde v,\phi) \, ds
\end{equation}
where $\tilde Q = (\tilde Q_1,\tilde Q_2)$. Let 
\[
\Phi(\tilde u,\tilde v) = S(t)(\tilde u_0,\tilde v_0) - i \int_0^t S(t-s)\tilde Q \, ds. 
\]
Assume that $\norm{\tilde u_0}_{L^2(\mathbb{R})}+\norm{\tilde v_0}_{L^2(\mathbb{R})} \leq \frac{R}{4}$ for $R>0$ small enough.
For $T>0$ we define the space $X_{T,R}$ by
\[
X_{T,R} = \left\{(\tilde u,\tilde v) \in C([-T,T],L^2(\mathbb{R})) \cap L^4([-T,T],L^{\infty}(\mathbb{R})) : \norm{\tilde u}_{L^{\infty}L^2 \cap L^4L^{\infty}} + \norm{\tilde v}_{L^{\infty}L^2 \cap L^4L^{\infty}}  \leq R \right\}.
\] 
We are going to prove that for $R,T$ small enough the map $\Phi$ is a contraction from $X_{T,R}$ to $X_{T,R}$.

We first prove that for $R,T$ small enough, $\Phi$ maps $X_{T,R}$ into $X_{T,R}$. Let $(\tilde u,\tilde v) \in X_{T,R}$. By Strichartz estimates we have
\begin{align*}
\norm{\Phi(\tilde u,\tilde v)}_{L^{\infty}L^2 \cap L^4L^{\infty}} & \lesssim \norm{(\tilde u_0,\tilde v_0)}_{L^2 \times L^2} + \norm{\tilde Q}_{L^1L^2 \times L^1L^2} ,\\ 
& \lesssim \frac{R}{4} + (\norm{\tilde Q_1}_{L^1L^2}+\norm{\tilde Q_2}_{L^1L^2}). 
\end{align*}
We have 
\begin{align*}
\norm{\tilde Q_1}_{L^1L^2} & \lesssim \norm{|\tilde u+\phi|^2\tilde v}_{L^1L^2}+\norm{|\tilde u+\phi|^3(|\tilde u|^2+|\tilde u||\phi|)}_{L^1L^2} + \norm{\partial^2\phi}_{L^1L^2} \\
& \lesssim \norm{\tilde v}_{L^2L^2}\norm{\tilde u+\phi}_{L^4L^{\infty}}^2 + \norm{\tilde u+\phi}_{L^4L^{\infty}}^3(\norm{\tilde u}_{L^4L^{\infty}}\norm{\tilde u}_{L^{\infty}L^2}+\norm{\tilde u}_{L^{\infty}L^2}\norm{\phi}_{L^4L^{\infty}}) + \norm{\partial^2\phi}_{L^1L^2} \\
& \lesssim \norm{\tilde v}_{L^{\infty}L^2}(2T)^{\frac{1}{2}}(\norm{\tilde u}_{L^4L^{\infty}}+\norm{\phi}_{L^4L^{\infty}})^2  \\ 
& + (\norm{\tilde u}_{L^4L^{\infty}}+\norm{\phi}_{L^4L^{\infty}})^3\left(\norm{\tilde u}_{L^4L^{\infty}}\norm{\tilde u}_{L^{\infty}L^2}+\norm{\tilde u}_{L^{\infty}L^2}\norm{\phi}_{L^{\infty}}(2T)^{\frac{1}{4}}\right) + \norm{\partial^2\phi}_{L^1L^2} \\
& \lesssim (2T)^{\frac{1}{2}}R(R+\norm{\phi}_{L^{\infty}}(2T)^{\frac{1}{4}})^2+(R+\norm{\phi}_{L^{\infty}}(2T)^{\frac{1}{4}})^3\left(R^2+R\norm{\phi}_{L^{\infty}}(2T^{\frac{1}{4}})\right)+ (2T)\norm{\partial^2\phi}_{L^2(\mathbb{R})}  \\
& \lesssim \frac{R}{4},
\end{align*} 
for $T,R$ small enough. Since $\tilde Q_2$ contains polynomial of order at most 5, we also have
\[
\norm{\tilde Q_2}_{L^1L^2} \lesssim \frac{R}{4}
\]
for $T,R$ small enough. Therefore, for $T,R$ small enough, we have
\[
\norm{\Phi(\tilde u,\tilde v)}_{(L^{\infty}L^2\cap L^4L^{\infty})^2} \leq \frac{3R}{4} < R.
\]
Hence, $\Phi$ maps from $X_{T,R}$ into itself.

We now show that for $T,R$ small enough, the map $\Phi$ is a contraction from $X_{T,R}$ to itself.\\
Indeed, let $(u_1,v_1),(u_2,v_2) \in X_{T,R}$. By Strichartz estimates we have
\begin{align*}
\norm{\Phi(u_1,v_1)-\Phi(u_2,v_2)}_{L^{\infty}L^2 \cap L^4L^{\infty}} & = \lVert{\int_0^t S(t-s)\left(\tilde Q(u_1,v_1)-\tilde Q(u_2,v_2)\right)\,ds\rVert}_{(L^{\infty}L^2 \cap L^4L^{\infty})^2} ,\\
& \lesssim \norm{\tilde Q_1(u_1,v_1)-\tilde Q_1(u_2,v_2)}_{L^1L^2} + \norm{\tilde Q_2(u_1,v_1)-\tilde Q_2(u_2,v_2)}_{L^1L^2}. 
\end{align*}
Using the same kind of arguments as before we obtain that $\Phi$ is a contraction on $X_{T,R}$. Therefore, using the Banach fixed-point theorem, there exist $T>0$ and a unique solution $\tilde u,\tilde v \in C([-T,T],L^2{\mathbb{R}}) \cap L^4([-T,T],L^{\infty}(\mathbb{R}))$ of the problem \eqref{eq:n1}. As above, we see that if $h,k \in C([-T,T],L^2{\mathbb{R}}) \cap L^4([-T,T],L^{\infty}(\mathbb{R}))$ then $\tilde Q_1(h,k),\tilde Q_2(h,k) \in L^1([-T,T], L^2(\R))$. By \cite[Proposition 4.1.9]{CaHa98}, $\tilde u,\tilde v \in C([-T,T],L^2{\mathbb{R}}) \cap L^4([-T,T],L^{\infty}(\mathbb{R}))$ solves \eqref{eq:n1} if only if $\tilde u,\tilde v$ solves \eqref{eq:m20}. Thus, we prove the existence of solution of \eqref{eq:m20}. The uniqueness of solution of \eqref{eq:m20} is obtained by the uniqueness of solution of \eqref{eq:n1}.      

It is remains to prove the continuous dependence on initial data. Assume that $(u^n_0,v^n_0) \in L^2(\R) \times L^2(\R)$ is such that 
\[
\norm{u^n_0-\tilde{u}_0}_{L^2(\R)}+\norm{v^n_0-\tilde{v}_0}_{L^2(\R)} \rightarrow 0,
\] 
as $n \rightarrow \infty$. In particular, for $n$ large enough, we have
\[
\norm{u^n_0}_{L^2(\R)} +\norm{v^n_0}_{L^2(\R)}<R.
\] 
There exists a unique maximal solution $(u^n,v^n)$ of system \eqref{eq:m20}, and we may assume that for $n$ large enough, $(u^n,v^n)$ is defined on $[-T,T]$. Assume that $T$ small enough such that
\begin{equation}\label{eq:smaller}
\norm{\tilde u}_{L^{\infty}L^2\cap L^4L^{\infty}}+\norm{\tilde v}_{L^{\infty}L^2\cap L^4L^{\infty}}+\mathop{\sup}\limits_{n}(\norm{u^n}_{L^{\infty}L^2\cap L^4L^{\infty}}+\norm{v_n}_{L^{\infty}L^2\cap L^4L^{\infty}}) \leq 2R.
\end{equation}
The functions $(\tilde u,\tilde v)$ are solutions of the following system
\[
(\tilde u,\tilde v) = S(t)(\tilde u_0,\tilde v_0) -i\int_0^t S(t-s)(\tilde Q_1(\tilde u,\tilde v,\phi) ,\tilde Q_2(\tilde u,\tilde v,\phi) ).
\]
Similarly, $(u^n,v^n)$ are solutions of the following system 
\[
(u^n,v^n) = S(t)(u^n_0,v^n_0) - i\int_0^t S(t-s)(\tilde Q_1(u^n,v^n,\phi),\tilde Q_2(u^n,v^n,\phi)).
\]
Hence,
\begin{align*}
&(u^n-u,v^n-v) \\& = S(t)(u^n_0-\tilde u_0,v^n_0-\tilde v_0) - i\int_0^t S(t-s)(\tilde Q_1(\tilde u,\tilde v,\phi)-\tilde Q_1(u^n,v^n,\phi),\tilde Q_2(\tilde u,\tilde v,\phi)-\tilde Q_2(u^n,v^n,\phi)).
\end{align*}
Using Strichartz estimates and \eqref{eq:smaller}, for all $t \in [-T,T]$ and $R$, $T$ small enough, we have
\begin{align*}
&\norm{u^n-\tilde u}_{L^{\infty}L^2 \cap L^4L^{\infty}} +\norm{v^n-\tilde v}_{L^{\infty}L^2 \cap L^4L^{\infty}}\\
& \lesssim \norm{u^n_0-\tilde u_0}_{L^2} + \norm{v^n_0-\tilde v_0}_{L^2}\\
&+ \norm{\tilde Q_1(\tilde u,\tilde v,\phi)-\tilde Q_1(u^n,v^n,\phi)}_{L^1L^2}+\norm{\tilde Q_2(\tilde u,\tilde v,\phi)-\tilde Q_2(u^n,v^n,\phi))}_{L^1L^2} \\
& \lesssim \norm{u^n_0-\tilde u_0}_{L^2} + \norm{v^n_0-\tilde v_0}_{L^2} \\
&+ R(\norm{u^n-\tilde u}_{L^{\infty}L^2 \cap L^4L^{\infty}} +\norm{v^n-\tilde v}_{L^{\infty}L^2 \cap L^4L^{\infty}}).
\end{align*}
For $R<\frac{1}{2}$ small enough, we have
\[
\frac{1}{2}(\norm{u^n-\tilde u}_{L^{\infty}L^2 \cap L^4L^{\infty}} +\norm{v^n-\tilde v}_{L^{\infty}L^2 \cap L^4L^{\infty}}) \leq \norm{\tilde u_0-u^n_0}_{L^2(\mathbb{R})}+\norm{\tilde v_0-v^n_0}_{L^2(\mathbb{R})} .
\]
Letting $n \to +\infty$ we obtain the desired result.
\end{proof}

By similar arguments we obtain the following result in higher regularity.
\begin{proposition}\label{pro:abc}
Consider the system \eqref{eq:m20}. Let $\phi \in X^4(\mathbb{R})$ and $\tilde u_0, \tilde v_0 \in H^2(\mathbb{R})$ such that $\norm{\tilde u_0}_{H^2(\mathbb{R})}+\norm{\tilde v_0}_{H^2(\mathbb{R})} < R$ small enough. Then, there exist $T = T(R)$ and a unique solution $\tilde u, \tilde v \in C([-T,T],H^2(\mathbb{R})) \cap L^4([-T,T],W^{2,\infty}(\mathbb{R}))$. 
\end{proposition}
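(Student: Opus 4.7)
The plan is to run the same Banach fixed-point scheme as in the proof of Proposition \ref{pro:n1}, but upgraded to the $H^2$-Strichartz level. For $T>0$ and $R>0$ small, set
\[
Y_{T,R}=\Bigl\{(\tilde u,\tilde v)\in C([-T,T],H^2)\cap L^4([-T,T],W^{2,\infty})\,:\,\|\tilde u\|_{E}+\|\tilde v\|_{E}\le R\Bigr\},
\]
where $\|\cdot\|_E=\|\cdot\|_{L^\infty H^2}+\|\cdot\|_{L^4W^{2,\infty}}$, and consider again
\[
\Phi(\tilde u,\tilde v)=S(t)(\tilde u_0,\tilde v_0)-i\int_0^t S(t-s)\tilde Q(\tilde u,\tilde v,\phi)\,ds,
\]
with $\tilde Q=(\tilde Q_1,\tilde Q_2)$ given by \eqref{eq:haha}--\eqref{eq:hehe}. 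Since $S(t)$ commutes with $\partial$, applying Strichartz inequalities to $\partial^\alpha \Phi(\tilde u,\tilde v)$ for $|\alpha|\le 2$ reduces everything to $L^1([-T,T],H^2)$ bounds on $\tilde Q_1$ and $\tilde Q_2$.

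The first step is to show that $\Phi$ maps $Y_{T,R}$ into itself. Here I would exploit two structural facts: $H^2(\R)$ is a Banach algebra with $H^2\hookrightarrow L^\infty$, and $\phi\in X^4(\R)$ supplies $\partial\phi\in H^3$, hence $\partial^2\phi,\partial^2(|\phi|^2)\in H^2$, together with the bound $\|\phi\|_{L^\infty}<\infty$. Since $\tilde Q_1$ and $\tilde Q_2$ are polynomial of degree at most $5$ in $\tilde u,\tilde v,\phi,\partial\phi,\partial^2\phi$, expanding and repeatedly applying the algebra estimate $\|fg\|_{H^2}\lesssim \|f\|_{H^2}\|g\|_{H^2}$ (and Moser-type estimates to place one factor in $W^{2,\infty}$ and the other in $H^2$ when appropriate) yields
\[
\|\tilde Q_i(\tilde u,\tilde v,\phi)\|_{L^1([-T,T],H^2)}\lesssim T\,\|\partial^2\phi\|_{H^2}+C(\phi)\,T^{1/2}\bigl(R+T^{1/4}\|\phi\|_{X^4}\bigr)^4 R,
\]
for $i=1,2$, where the $T^{1/2}$ comes from pairing one $L^\infty_tL^2_x$ factor with one $L^2_tL^\infty_x\subset L^4_tL^\infty_x\cdot |I|^{1/4}$ factor. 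Combined with the Strichartz bound $\|S(t)(\tilde u_0,\tilde v_0)\|_E\lesssim R$, this gives $\|\Phi(\tilde u,\tilde v)\|_E\le R$ provided $R$ and then $T=T(R)$ are chosen small enough.

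Next I would verify the contraction property. For $(u_1,v_1),(u_2,v_2)\in Y_{T,R}$ the difference $\tilde Q(u_1,v_1,\phi)-\tilde Q(u_2,v_2,\phi)$ factors through $(u_1-u_2,v_1-v_2)$ with polynomial coefficients of total degree at most $4$ in the $Y_{T,R}$-variables and $\phi$-data; the same Strichartz plus algebra estimates then yield
\[
\|\Phi(u_1,v_1)-\Phi(u_2,v_2)\|_E\le \tfrac12\bigl(\|u_1-u_2\|_E+\|v_1-v_2\|_E\bigr)
\]
for $R,T$ small enough. The Banach fixed-point theorem delivers existence and uniqueness in $Y_{T,R}$, and uniqueness on the whole ball follows as in Proposition \ref{pro:n1}. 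Equivalence between the integral formulation and \eqref{eq:m20} is, exactly as there, a consequence of \cite[Proposition 4.1.9]{CaHa98}.

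The main obstacle I anticipate is bookkeeping rather than a conceptual difficulty: $\tilde Q_2$ contains the nonlocal pieces $\partial^2(|\phi|^2)(\tilde u+\phi)$ and $\partial(|\phi|^2)\tilde v$, whose $H^2$ norms force one to use the full strength of $\phi\in X^4$ (namely $\partial^4\phi\in L^2$) so that $\|\phi\partial^4\phi\|_{L^2}\le\|\phi\|_{L^\infty}\|\partial^4\phi\|_{L^2}$ is finite; one has to be careful to always place $\phi$ itself in $L^\infty$ rather than in $H^2$ (which would be false). Once this placement is made consistently, the $H^2$-Banach-algebra mechanism closes the estimates and the rest of the argument is a routine higher-regularity copy of Proposition \ref{pro:n1}.
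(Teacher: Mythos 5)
Your proposal is correct and follows exactly the route the paper intends: the paper gives no separate proof of Proposition \ref{pro:abc}, stating only that it follows ``by similar arguments'' from Proposition \ref{pro:n1}, and your fixed-point argument in $C([-T,T],H^2)\cap L^4([-T,T],W^{2,\infty})$ is the natural elaboration of that remark. Your observation that the terms involving $\partial^2(|\phi|^2)$ force one to use $\partial^4\phi\in L^2$ and to place $\phi$ itself in $L^\infty$ rather than $H^2$ correctly identifies the one point where $\phi\in X^4$ (rather than $X^2$) is genuinely needed.
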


\subsubsection{Preservation of a differential identity}

By Proposition \ref{cor:pvt} the solutions obtained by Proposition \ref{pro:abc} satisfy the following  property.
\begin{proposition}\label{pro:fize}
Let $\phi \in X^4(\mathbb{R})$, $\tilde u_0, \tilde v_0 \in H^2(\mathbb{R})$ such that 
\[
\tilde v_0=\partial \tilde u_0 + \frac{i}{2}(\tilde u_0+\phi)(|\tilde u_0+\phi|^2-|\phi|^2)+\partial \phi.
\]
Then the associated solutions $\tilde u, \tilde v$ of \eqref{eq:m20} satisfy the following condition for all $t \in [-T,T]$
\[
\tilde v=\partial \tilde u + \frac{i}{2}(\tilde u+\phi)(|\tilde u+\phi|^2-|\phi|^2)+\partial\phi.
\]
\end{proposition}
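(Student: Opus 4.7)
My plan is to reduce the statement to Proposition \ref{cor:pvt} together with a uniqueness argument for the system \eqref{eq:m20}. Under the smallness hypothesis of Proposition \ref{pro:abc}, the same initial data $\tilde u_0, \tilde v_0 \in H^2(\R)$ falls within the scope of Proposition \ref{pro:m207} applied with $k=2$, so one also obtains a (strong) solution $(\tilde u^\star, \tilde v^\star) \in C((-T_\star, T^\star), H^2) \cap C^1((-T_\star, T^\star), L^2)$ of \eqref{eq:m20}. Because the nonlinearities $\tilde Q_1$ and $\tilde Q_2$ displayed in \eqref{eq:haha}--\eqref{eq:hehe} are polynomial in the unknowns and in $\phi, \partial\phi, \partial^2\phi$, and because in dimension one $H^2(\R) \hookrightarrow W^{1,\infty}(\R)$, these nonlinearities are Lipschitz on bounded sets of $H^2$. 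A standard Gronwall argument on the Duhamel formulation then gives uniqueness in the class $C([-T,T], H^2(\R))$, so on the common interval the solution produced by Proposition \ref{pro:abc} agrees with $(\tilde u^\star, \tilde v^\star)$.

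With this identification in hand, the conclusion is immediate: Proposition \ref{cor:pvt} applies to $(\tilde u^\star, \tilde v^\star)$ and asserts preservation of the differential identity throughout its existence interval, hence in particular on $[-T,T]$. Transferring this along the identification yields the desired identity for the solution of Proposition \ref{pro:abc}.

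The main obstacle I anticipate is verifying that the two existence classes genuinely overlap and that uniqueness holds simultaneously in both; in particular one should check that $(\tilde u, \tilde v) \in C([-T,T],H^2) \cap L^4([-T,T], W^{2,\infty})$ produced by the contraction in Proposition \ref{pro:abc} satisfies the mild formulation used in Proposition \ref{pro:m207}, which it does via \cite[Proposition 4.1.9]{CaHa98}. As a safety net, if one prefers to bypass this comparison entirely, an equivalent plan is to rerun the computation of the proof of Proposition \ref{cor:pvt} verbatim: define $\tilde w = \partial\tilde u + \frac{i}{2}(\tilde u+\phi)(|\tilde u+\phi|^2 - |\phi|^2)+\partial\phi$, derive exactly the identity \eqref{eq new new}, multiply by $\overline{\tilde w - \tilde v}$, take imaginary part, integrate, and apply Gronwall. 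All $L^\infty$ bounds on $\tilde u + \phi$, $\partial\tilde u + \partial\phi$ and on the coefficients $A, B$ required for that estimate are controlled by $\tilde u \in C([-T,T], H^2)$ via $H^2 \hookrightarrow W^{1,\infty}$, together with $\phi \in X^4(\R)$, so the argument goes through despite the weaker time regularity.
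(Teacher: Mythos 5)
Your proposal is correct and follows essentially the same route as the paper, which gives no separate proof and simply asserts that the result follows by applying Proposition \ref{cor:pvt} to the solutions obtained in Proposition \ref{pro:abc}. Your uniqueness argument identifying the Strichartz-constructed solution of Proposition \ref{pro:abc} with the semigroup solution of Proposition \ref{pro:m207} (to which Proposition \ref{cor:pvt} formally applies), as well as the fallback of rerunning the Gronwall computation directly, supplies a justification that the paper leaves implicit.
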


\subsubsection{From the system to the equation}
In this section, we prove Theorem \ref{local well posedness on phi plus H1 space} by getting back to the equation.
\begin{proof}[Proof of Theorem \ref{local well posedness on phi plus H1 space}.] Let $\phi \in X^4(\R)$ such that $\norm{\partial\phi}_{L^2}$ is small enough. Let $u_0 \in \phi+H^1(\R)$ be such that $\norm{u_0-\phi}_{H^1}$ small enough. Set $v_0=\partial u_0+\frac{i}{2}u_0(|u_0|^2-|\phi|^2)+\phi$, $\tilde u_0=u_0-\phi$ and $\tilde v_0=v_0-\phi$. We have
\[
\tilde v_0=\partial\tilde u_0+\frac{i}{2}(\tilde u_0+\phi)(|\tilde u_0+\phi|^2-|\phi|^2)+\partial \phi.
\]
Furthermore, $\tilde u_0 \in H^1(\mathbb{R})$, $\tilde v_0 \in L^2(\mathbb{R})$. We have
\[
\norm{\tilde u_0}_{L^2(\mathbb{R})}+\norm{\tilde v_0}_{L^2(\mathbb{R})} \lesssim \norm{\tilde u_0}_{H^1(\mathbb{R})} +\norm{\partial\phi}_{L^2},
\]
which is small enough by the assumption. By Proposition \ref{pro:n1}, there exists $T>0$ and a unique solution $\tilde u, \tilde v \in C([-T,T],L^2(\mathbb{R})) \cap L^4([-T,T],L^{\infty})$ of the system \eqref{eq:m20}. %By Proposition \ref{proconservesss}, we obtain
%\begin{equation}\label{eq:convergence}
%\tilde v=\partial \tilde u+\frac{i}{2}(\tilde u+\phi)(|\tilde u+\phi|^2-|\phi|^2)+\partial\phi.
%\end{equation}
Let $u^n_0 \in H^3(\mathbb{R})$ be such that  $\norm{u^n_0}_{H^3(\mathbb{R})}$ small enough and $\norm{u^n_0-\tilde u_0}_{H^1(\mathbb{R})} \to 0$ as $n \to +\infty$. Set
\[
v^n_0 = \partial u^n_0 + \frac{i}{2}(u^n_0+\phi)(|u^n_0+\phi|^2-|\phi|^2)+\partial\phi.
\]
We have
\[
\norm{\tilde u^n_0}_{H^2(\mathbb{R})}+\norm{\tilde v_0}_{H^2(\mathbb{R})} \lesssim \norm{\tilde u_0}_{H^3(\mathbb{R})} +\norm{\partial\phi}_{H^2},
\] 
which is small enough by the assumption. Let $(u^n,v^n)$ be the $H^2(\R)$ solution of the system \eqref{eq:m20} obtained by Proposition \ref{pro:abc} with data $(u^n_0,v^n_0)$. By Proposition \ref{pro:fize} we have
\begin{equation}\label{eq:relative}
v^n=\partial u^n+\frac{i}{2}(u^n+\phi)(|u^n+\phi|^2-|\phi|^2)+\partial\phi.
\end{equation}
Furthermore, we have
\[
\norm{u^n_0-\tilde u_0}_{L^2(\mathbb{R})} + \norm{v^n_0-\tilde v_0}_{L^2(\mathbb{R})} \to 0.
\]
From the continuous dependence on the initial data obtained in Proposition \ref{pro:n1}, $(u^n,v^n),(\tilde u,\tilde v)$ are solutions of the system \eqref{eq:m20} on $[-T,T]$ for $n$ large enough, and
\[
\norm{u^n-\tilde u}_{L^{\infty}L^2 \cap L^4L^{\infty}} + \norm{v^n-\tilde v}_{L^{\infty}L^2 \cap L^4L^{\infty}} \to 0
\]
as $n \to \infty$. Let $n \to \infty$ on the two sides of \eqref{eq:relative}, we obtain for all $t \in [-T,T]$ 
\begin{equation}\label{eq:convergence}
\tilde v=\partial \tilde u+\frac{i}{2}(\tilde u+\phi)(|\tilde u+\phi|^2-|\phi|^2)+\partial\phi,
\end{equation}
which make sense in $H^{-1}(\mathbb{R})$. From \eqref{eq:convergence} we see that $\partial \tilde u \in C([-T,T],L^2(\mathbb{R}))$ and \eqref{eq:convergence} makes sense in $L^2(\mathbb{R})$. Then $\tilde u \in C([-T,T],H^1(\mathbb{R})) \cap L^4([-T,T],L^{\infty})$. By the Sobolev embedding of $H^1(\mathbb{R})$ in $L^{\infty}(\mathbb{R})$ we obtain that 
\begin{align*}
\norm{(\tilde u+\phi)(|\tilde u+\phi|^2-|\phi|^2)}_{L^4L^{\infty}} & \lesssim \norm{|\tilde u+\phi||\tilde u||\tilde u|+|\phi|} \lesssim \norm{\tilde u}_{L^4L^{\infty}}(\norm{\tilde u}_{L^{\infty}L^{\infty}}+\norm{\phi}_{L^{\infty}L^{\infty}})^2 \\
& \lesssim \norm{\tilde u}_{L^4L^{\infty}}(\norm{\tilde u}_{L^{\infty}H^1}+\norm{\phi}_{L^{\infty}L^{\infty}})^2 <\infty.
\end{align*}
Hence, $(\tilde u+\phi)(|\tilde u+\phi|^2-|\phi|^2) \in L^4L^{\infty}$. From \eqref{eq:convergence} we obtain that $\partial\tilde u \in L^4L^{\infty}$ which implies $\tilde u \in L^4([-T,T],W^{1,\infty}(\mathbb{R}))$. Set $u = \tilde u+\phi$, $v= \tilde v + \phi$, then $u-\phi \in C([-T,T],H^1(\mathbb{R})) \cap L^4([-T,T],W^{1,\infty}(\mathbb{R}))$ and $v-\phi \in C([-T,T],L^2(\mathbb{R})) \cap L^4([-T,T],L^{\infty}(\mathbb{R}))$. Moreover,
\[
v=\partial u+\frac{i}{2}u(|u|^2-|\phi|^2)+\phi.
\] 
Since $u,v$ solve \eqref{eq:m10}, we have
\[
Lu= Q_1(u,v,\phi) = Q_1\left(u,\partial u+\frac{i}{2}u(|u|^2-|\phi|^2)+\phi,\phi\right)=-iu^2\partial\overline{u}.
\]
The existence of a solution of the equation \eqref{eq:1} follows. To prove the uniqueness property, assume that $U \in C([-T,T],\phi+H^1(\mathbb{R})) \cap L^4([-T,T],\phi+W^{1,\infty}(\mathbb{R}))$ is an other solution of the equation \eqref{eq:1}. Set $V=\partial U+\frac{i}{2}U(|U|^2-|\phi|^2)+\phi$ and $\tilde U=U-\phi$, $\tilde V=V-\phi$. Hence $\tilde U \in C([-T,T],H^1(\mathbb{R})) \cap L^4([-T,T],W^{1,\infty}(\mathbb{R}))$ and $\tilde V \in C([-T,T],L^2(\mathbb{R})) \cap L^4([-T,T],L^{\infty}(\mathbb{R}))$. Moreover, $\tilde U, \tilde V$ is solution of the system \eqref{eq:m20}. By the uniqueness of solutions of \eqref{eq:m20}, we obtain that $\tilde U=\tilde u$. Hence, $u=U$, which complete the proof.
\end{proof}

\begin{remark}\label{remark Cauchy problem}
In \cite{Ge06}, G\'erard gives the proof of local well posedness of solutions of the Gross-Pitaevskii in energy space, using some properties of the energy space. More precisely, he proved that there exists a unique maximal solution $u \in C((-T_{min},T^{max}),\mathcal{E})$ of the problem
\begin{equation}\label{Gross Pitaske equation}
u(t)=S(t)u_0-i\int_0^t S(t-s)(u(s)(|u(s)|^2-1))\,ds,
\end{equation}
where $S(t)$ is the Schr\"odinger group, $u_0\in \mathcal{E}$ is given and $\mathcal{E}$ is the energy space which is defined by
\[
\mathcal{E}:=\left\{u\in H^1_{loc}(\R): \partial u \in L^2(\R), |u|^2-1 \in L^2(\R)\right\}.
\]
The proof of Gérard is in dimension $2$ and $3$. We can give a simple proof of this result in one dimension. Indeed, we see that $u_0\in\mathcal{E}\subset X^1(\R)$, then, it is easy to prove that there exists a unique maximal solution $u \in C((-T_{min},T^{max}),X^1(\R))$. Set 
\[
w(t,x)=u(t,x)-u_0(x).
\]
Consider the following problem
\begin{equation}\label{modfined of Gross Pitaske equation}
w(t)=-i\int_0^t S(t-s)((u_0+w(s))(|u_0+w(s)|^2-1))\, ds.
\end{equation}
We can check that the function $P : H^1(\R) \rightarrow H^1(\R)$ defined by
\[
P(w)=(u_0+w)(|u_0+w|^2-1),
\]
is Lipschitz continuous on bounded set of $H^1(\R)$. Thus, by elementary arguments, there exists unique maximal solution $w \in C((-T_{min},T^{max}),H^1(\R))$ of the Cauchy problem \eqref{modfined of Gross Pitaske equation}. It implies that there exists a unique maximal solution $u \in C((-T_{min},T^{max}),u_0+H^1(\R))$ of the Cauchy problem \eqref{Gross Pitaske equation}. Using the fact that
\[
u_0+H^1(\R) \subset \mathcal{E} \subset X^1(\R),
\] 
we obtain that there exists a unique maximal solution $u \in C((-T_{min},T^{max}),\mathcal{E})$ of the Cauchy problem \eqref{Gross Pitaske equation}.
\end{remark}

\section{Conservation of the mass, the energy and the momentum}
In this section, we prove Theorem \ref{thm conservation law special case}. Let $q_0 \in \R$ and $u \in q_0+H^2(\R)$ be a solution of \eqref{eq:1}. Let $\chi$ and $\chi_R$ be the functions defined as in \eqref{eq of chi} and \eqref{eq of chi R}. We have
\begin{align}\label{converge to 0 as R converge infity}
\norm{\partial\chi_R}_{L^2(\R)} &= \left(\int_{\R}\frac{1}{R}\chi'\left(\frac{x}{R}\right)\right)^{\frac{1}{2}} = \frac{1}{R^{\frac{1}{2}}} \norm{\chi}_{L^2(\R)} \to 0 \, \text{as} \, R \to +\infty.
\end{align}
By the continuous depend on initial data property of solution, we can assume that 
\[
u \in C((-T_{min},T_{max}),q_0+H^3(\R)).
\]
It is enough to prove \eqref{eq:conservation mass special}, \eqref{eq:conservation energy special} and \eqref{eq:conservation momentum special} for any closed interval $[-T_0,T_1] \in (-T_{min},T_{max})$. Let $T_0>0$, $T_1>0$ be such that $[-T_0,T_1] \subset (-T_{min},T_{max})$. Let $M>0$ be defined by
\[
M =\mathop{\sup}\limits_{t\in [-T_0,T_1]}\norm{u-q_0}_{H^2(\R)}.
\]

\subsection{Conservation of mass}

Multiply the two sides of \eqref{eq:1} with $\overline{u}$ and take imaginary part to obtain 
\begin{align*}
\Re(u_t\overline{u}) + \Im(\partial^2u\overline{u}) + \Re(|u|^2u\partial\overline{u}) &=0.
\end{align*} 
This implies that

\begin{align*}
0 &= \frac{1}{2}\partial_t (|u|^2) + \partial(\Im(\partial u\overline{u})) + \frac{1}{4} \partial(|u|^4) \\
&= \frac{1}{2}\partial_t (|u|^2-q_0^2) + \partial(\Im(\partial u\overline{u})) + \frac{1}{4} \partial(|u|^4-q_0^4).
\end{align*}

By multiplying the two sides with $\chi_R$, integrating on space, and integrating by part we have
\begin{align}
0 &= \partial_t \int_{\R} \frac{1}{2} (|u|^2-q_0^2)\chi_R  dx - \int_{\R} \Im(\partial u\overline{u}) \partial \chi_R - \int_{\R} \frac{(|u|^4-q_0^4)}{4} \partial \chi_R  dx\\
&=  \partial_t \int_{\R} \frac{1}{2} (|u|^2-q_0^2)\chi_R  dx - \int_{\R}\left(\Im(\partial u\overline{u})+\frac{1}{4}(|u|^4-q_0^4)\right)\partial\chi_R  dx . \label{eq r}
\end{align}

Denote the second term of \eqref{eq r} by $K$, using \eqref{converge to 0 as R converge infity}, we have
\[
|K| \leq \norm{\Im(\partial u\overline{u})+\frac{1}{4}(|u|^4-q_0^4)}_{L^2}\norm{\partial \chi_R}_{L^2} \lesssim C(M)\frac{1}{R^{\frac{1}{2}}} \rightarrow 0 \text{ as } R \rightarrow \infty.
\]
Thus, by integrating from $0$ to $t$ and taking $R$ to infinity, using the assumption $|u_0|^2-q_0^2 \in L^1(\R)$ we obtain

\begin{align}
\mathop{\lim}\limits_{R \rightarrow \infty} \int_{\R} \frac{1}{2} (|u|^2-q_0^2)\chi_R  dx &= \mathop{\lim}\limits_{R \rightarrow \infty} \int_{\R} \frac{1}{2} (|u_0|^2-q_0^2)\chi_R  dx =\frac{1}{2}\int_{\R}(|u_0|^2-q_0^2)\, dx. \label{eq z}
\end{align}
Thus, we obtain the conservation of the mass \eqref{eq:conservation mass special}. 
\subsection{Conservation of energy}
Now, we prove the conservation of the energy. Since $u$ solves \eqref{eq:1}, after elementary calculations, we have 

\begin{align}
\label{eq:e^3q^3}
\partial_t(|\partial u|^2) &= \partial\left(2\Re(\partial u\partial_t  \overline{u})+\Re(u^2(\partial\overline{u})^2)-|\partial u|^2|u|^2-|u|^4\Im(\overline{u}\partial u)\right)+|u|^4\partial\Im(\overline{u} \partial u) + 2 \Im(|u|^2\partial\overline{u}u_t).
\end{align}

Recall that we have

\begin{align}
\label{eq:eeqq}
\partial\Im(\partial u\overline{u}) &= -\frac{1}{2}\partial_t(|u|^2) -\frac{1}{4}\partial(|u|^4).
\end{align}

Moreover, we have

\begin{align*}
\partial_t \Im(|u|^2u\partial\overline{u}) &= 4\Im(u_t|u|^2\partial\overline{u}) + \partial\Im(|u|^2u\partial_t\overline{u}).
\end{align*}
%&=\Im(\partial_t(|u|^2)u\partial\overline{u})+\Im(|u|^2\partial\overline{u}u_t)+\Im(|u|^2u\partial_t\partial\overline{u}) \\
%&= \partial_t(|u|^2)\Im(u\partial\overline{u})+\Im(|u|^2\partial\overline{u}u_t)+\partial\Im(|u|^2u\partial_t\overline{u}) -\Im(\partial_t\overline{u}\partial(|u|^2 u)) \\
%&= 2\Re(u_t\overline{u})\Im(u\partial\overline{u})+ \Im(|u|^2\partial\overline{u}u_t) + \partial\Im(|u|^2 u\partial_t\overline{u})- \Im(\partial_t\overline{u}(2|u|^2\partial u+u^2\partial\overline{u})) \\
%&= 2\Re(u_t\overline{u})\Im(u\partial\overline{u})+ \Im(|u|^2\partial\overline{u}u_t) + \partial\Im(|u|^2 u\partial_t\overline{u})-2|u|^2\Im(\partial_t\overline{u}\partial u) + \Im(u_t\overline{u}^2\partial u) \\
%&= 2\Re(u_t\overline{u})\Im(u\partial\overline{u})+ \Im(|u|^2\partial\overline{u}u_t) + \partial\Im(|u|^2 u\partial_t\overline{u}) \\ 
%&+ 2\Im(|u|^2 u_t\partial\overline{u})+\Re(u_t\overline{u})\Im(\overline{u}\partial u)+\Im(u_t\overline{u})\Re(\overline{u}\partial u) \\
%&= 3\Im(u_t|u|^2\partial\overline{u})+\partial\Im(|u|^2u\partial_t\overline{u})+ \left(2\Re(u_t\overline{u})\Im(u\partial\overline{u})+\Re(u_t\overline{u})\Im(\overline{u}\partial u)+\Re(\overline{u}\partial u)\Im(u_t\overline{u})\right) \\
%&= 4\Im(u_t|u|^2\partial\overline{u}) + \partial\Im(|u|^2u\partial_t\overline{u}).
%\end{align*}

It follows that

\begin{align}
\label{eq:eq^4}
2\Im(|u|^2u_t\partial\overline{u}) &= \frac{1}{2}\left(\partial_t\Im(|u|^2u\partial\overline{u}) - \partial\Im(|u|^2u\partial_t\overline{u})\right).
\end{align}

From \eqref{eq:e^3q^3}, \eqref{eq:eeqq} and \eqref{eq:eq^4} we have

\begin{align*}
\partial_t(|\partial u|^2)&= \partial\left(2\Re(\partial u\partial_t\overline{u}) +\Re(u^2(\partial\overline{u})^2)-|u|^2|\partial u|^2-|u|^4\Im(\partial u\overline{u})-\frac{1}{2}\Im(|u|^2u\partial_t\overline{u})\right) \\
&+\frac{1}{2}\partial_t\Im(|u|^2u\partial\overline{u}) -\frac{1}{8}\partial(|u|^8)-\frac{1}{6}\partial_t(|u|^6).
\end{align*}

Hence,

\begin{align*}
&\partial_t\left(|\partial u|^2-\frac{1}{2}\Im((|u|^2u-q_0^3)\partial\overline{u})+\frac{1}{6}(|u|^6-q_0^6)\right) \\
&=\partial\left(2\Re(\partial u\partial_t\overline{u}) +\Re(u^2(\partial\overline{u})^2)-|u|^2|\partial u|^2-|u|^4\Im(\partial u\overline{u})-\frac{1}{2}\Im(|u|^2u\partial_t\overline{u}) -\frac{1}{8}(|u|^8-q_0^8) \right)+\frac{1}{2}q_0^3\Im\partial_t\partial\overline{(u-q_0)}.
\end{align*}

By multiplying the two sides with $\chi_R$ then integrating in space and integrating by part we obtain 

\begin{align*}
&\partial_t\int_{\R}\left(|\partial u|^2-\frac{1}{2}\Im((|u|^2u-q_0^3)\partial\overline{u})+\frac{1}{6}(|u|^6-q_0^6)\right)\chi_R \, dx \\
&=-\int_{\R}\partial\chi_R \left(2\Re(\partial u\partial_t\overline{u}) +\Re(u^2(\partial\overline{u})^2)-|u|^2|\partial u|^2-|u|^4\Im(\partial u\overline{u})-\frac{1}{2}\Im(|u|^2u\partial_t\overline{u}) -\frac{1}{8}(|u|^8-q_0^8)\right)\, dx \\
&-\frac{q_0^3}{2}\Im\partial_t\int_{\R}\overline{(u-q_0)}\partial\chi_R \, dx.
\end{align*}

Integrating from $0$ to $t$ we obtain

\begin{align}
&\int_{\R}\left(|\partial u|^2-\frac{1}{2}\Im((|u|^2u-q_0^3)\partial\overline{u})+\frac{1}{6}(|u|^6-q_0^6)\right)\chi_R \, dx \quad \label{eq c}\\
&-\int_{\R}\left(|\partial u_0|^2-\frac{1}{2}\Im((|u_0|^2u_0-q_0^3)\partial\overline{u_0})+\frac{1}{6}(|u_0|^6-q_0^6)\right)\chi_R \, dx \quad \label{eq d}\\
&= \int_0^t\int_{\R}\partial\chi_R \left(2\Re(\partial u\partial_t\overline{u}) +\Re(u^2(\partial\overline{u})^2)-|u|^2|\partial u|^2-|u|^4\Im(\partial u\overline{u}) \right. \nonumber \\
&\quad \left.-\frac{1}{2}\Im(|u|^2u\partial_t\overline{u}) -\frac{1}{8}(|u|^8-q_0^8)\right)\, dx \, ds \label{eq a1}\\
&-\frac{q_0^3}{2}\left(\Im\int_{\R}\overline{(u-q_0)}\partial\chi_R \, dx-\Im\int_{\R}\overline{(u_0-q_0)}\partial\chi_R \, dx\right)\quad \label{eq b}.
\end{align}

Denoting the term \eqref{eq a1} by $A$, using \eqref{converge to 0 as R converge infity}, we have

\begin{align}
|A| &\leq \norm{\partial\chi_R}_{L^2}\norm{2\Re(\partial u\partial_t\overline{u}) +\Re(u^2(\partial\overline{u})^2)-|u|^2|\partial u|^2-|u|^4\Im(\partial u\overline{u})\\
&-\frac{1}{2}\Im(|u|^2u\partial_t\overline{u}) -\frac{1}{8}(|u|^8-q_0^8)}_{L^2}\\
&\lesssim C(M)\norm{\partial\chi_R}_{L^2} \rightarrow 0 \text{ as } R \rightarrow \infty. \label{eq converge n}
\end{align} 

Moreover, using \eqref{converge to 0 as R converge infity} again, we have 
\begin{align}
&\left|\Im\int_{\R}\overline{(u-q_0)}\partial\chi_R \, dx\right| \leq \norm{u-q_0}_{L^2}\norm{\partial\chi_R}_{L^2} \lesssim C(M)\norm{\partial\chi_R}_{L^2} \rightarrow 0 \text{ as } R \rightarrow \infty. \label{eq converge l}\\
&\left|\Im\int_{\R}\overline{(u_0-q_0)}\partial\chi_R \, dx\right| \leq \norm{u_0-q_0}_{L^2}\norm{\partial\chi_R}_{L^2} \lesssim C(M)\norm{\partial\chi_R}_{L^2} \rightarrow 0 \text{ as } R \rightarrow \infty. \label{eq converge m}
\end{align}

To deal with the term \eqref{eq c}, we need to divide into two terms. First, using $u\in q_0+H^3(\R)$, as $R \rightarrow \infty$, we have

\begin{align}\label{eq xz}
\int_{\R} \left(|\partial u|^2-\frac{1}{2}\Im((|u|^2u-q_0^3)\partial\overline{u})\right)\chi_R \, dx \rightarrow \int_{\R} \left(|\partial u|^2-\frac{1}{2}\Im((|u|^2u-q_0^3)\partial\overline{u})\right) \, dx.
\end{align} 

Second, by easy calculations, we have

\begin{align}
&\frac{1}{6}\int_{\R}(|u|^6-q_0^6)\chi_{R} \, dx  \label{eq g1}\\
&= \frac{1}{6}\int_{\R}\left[(|u|^2-q_0^2)(|u|^4+q_0^2|u_0|^2-2q_0^4)+3q_0^4(|u|^2-q_0^2)\right]\chi_{R}\, dx \nonumber\\
&= \frac{1}{6}\int_{\R}(|u|^2-q_0^2)^2(|u|^2+2q_0^2) \chi_R\,dx \label{eq f}\\
&+\frac{1}{2}\int_{\R}(|u|^2-q_0^2)\chi_{R} \, dx \label{eq g}. 
\end{align}

Denote the term \eqref{eq f} by $B$, we have

\begin{align}\label{eq converge k}
B \rightarrow  \frac{1}{6}\int_{\R}(|u|^2-q_0^2)(|u|^4+q_0^2|u_0|^2-2q_0^4) \, dx \text{ as } R \rightarrow +\infty.
\end{align}

The term \eqref{eq g} is treated using conservation of mass \eqref{eq z}. Finally, we have

\begin{align}
\label{eq xy}
\mathop{\lim}\limits_{R \rightarrow \infty}\frac{1}{6}\int_{\R}(|u|^6-q_0^6)\chi_R\, dx &= \frac{1}{6}\int_{\R}(|u|^2-q_0^2)^2(|u|^2+2q_0^2)\, dx +\frac{q_0^4}{2 }\mathop{\lim}\limits_{R \rightarrow \infty}\int_{\R}(|u_0|^2-q_0^2)\chi_R \, dx.
\end{align}

Combining \eqref{eq xy}, \eqref{eq xz} we have 

\begin{align}
\mathop{\lim}\limits_{R \rightarrow \infty}(\text{ the term } \eqref{eq c}) &=  \int_{\R}|\partial u|^2-\frac{1}{2} \Im(|u|^2u-q_0^3)\partial\overline{u}) \, dx+\frac{1}{6}\int_{\R}(|u|^2-q_0^2)^2(|u|^2+2q_0^2)\, dx \nonumber \\
& +\frac{1}{2 }\mathop{\lim}\limits_{R \rightarrow \infty}\int_{\R}(|u_0|^2-q_0^2)\chi_R \, dx.  \label{eq xyz}
\end{align}

Similarly,

\begin{align}
\mathop{\lim}\limits_{R \rightarrow \infty}(\text{ the term } \eqref{eq d}) &=  \int_{\R}|\partial u_0|^2-\frac{1}{2} \Im(|u_0|^2u_0-q_0^3)\partial\overline{u_0}) \, dx+\frac{1}{6}\int_{\R}(|u_0|^2-q_0^2)^2(|u_0|^2+2q_0^2)\, dx \nonumber \\
& +\frac{1}{2 }\mathop{\lim}\limits_{R \rightarrow \infty}\int_{\R}(|u_0|^2-q_0^2)\chi_R \, dx.  \label{eq xyz1}
\end{align}

Combined \eqref{eq c}-\eqref{eq converge m}, \eqref{eq xyz} and \eqref{eq xyz1}, we have 

\begin{align*}
&\int_{\R}|\partial u|^2-\frac{1}{2} \Im(|u|^2u-q_0^3)\partial\overline{u}) \, dx+\frac{1}{6}\int_{\R}(|u|^2-q_0^2)^2(|u|^2+2q_0^2)\, dx\\
&-\int_{\R}|\partial u_0|^2-\frac{1}{2} \Im(|u_0|^2u_0-q_0^3)\partial\overline{u_0}) \, dx+\frac{1}{6}\int_{\R}(|u_0|^2-q_0^2)^2(|u_0|^2+2q_0^2)\, dx \\
&= 0
\end{align*}

This implies \eqref{eq:conservation energy special}.

\subsection{Conservation of momentum}
Now, we prove \eqref{eq:conservation momentum special}. Multiplying the two sides of \eqref{eq:1} with $-\partial\overline{u}$ and taking real part we obtain 

\begin{align}
0 &= -\Re(iu_t\partial\overline{u}+\partial^2u\partial\overline{u}+iu^2(\partial\overline{u})^2)\nonumber\\
&=\Im(u_t\partial\overline{u})  +\Im(u^2(\partial\overline{u})^2) - \frac{1}{2} \partial(|\partial u|^2). \label{eq:eq}
\end{align}    
Moreover, by elementary calculation, we have

\begin{align*}
\partial_t \Im(u\partial\overline{u}) &= 2 \Im(u_t\partial \overline{u})+ \partial\Im(u\partial_t\overline{u}). 
\end{align*}

Replacing $\Im(u_t\partial \overline{u}) = \frac{1}{2}\left(\partial_t\Im(u\partial\overline{u}) -\partial\Im(u\partial_t\overline{u})\right)$ in \eqref{eq:eq}, we obtain that

\begin{align*}
0 &= \left(\frac{1}{2}\partial_t\Im(u\partial\overline{u}) -\frac{1}{2}\partial\Im(u\partial_t\overline{u})\right) + 2 \Re(u\partial\overline{u})\Im(u\partial\overline{u}) - \frac{1}{2} \partial(|\partial u|^2) \\
&= \partial_t\left[\frac{1}{2}\Im(u\partial\overline{u})-\frac{1}{4}(|u|^4-q_0^4)\right] + \partial\left[\Im(|u|^2u\partial\overline{u})-\frac{1}{2}|\partial u|^2-\frac{1}{6} (|u|^6-q_0^6)\right].
\end{align*}

Multiply two sides by $\chi_R$ and integral on space, using integral by part, we have 

\begin{align}
0 &= \partial_t\int_{\R}\left[\frac{1}{2}\Im(u\partial\overline{u})-\frac{1}{4}(|u|^4-q_0^4)\right] \chi_R  dx - \int_{\R} \left[\Im(|u|^2u\partial\overline{u})-\frac{1}{2}|\partial u|^2-\frac{1}{6}(|u|^6-q_0^6)\right] \partial \chi_R  dx \nonumber\\ 
&= \partial_t\int_{\R}\left[\frac{1}{2}\Im(u\partial\overline{u})-\frac{1}{4}(|u|^2-q_0^2)^2-\frac{1}{2}q_0^2(|u|^2-q_0^2)\right] \chi_R  dx - \int_{\R} \left[\Im(|u|^2u\partial\overline{u})-\frac{1}{2}|\partial u|^2-\frac{1}{6}(|u|^6-q_0^6)\right] \partial \chi_R  dx. 
\label{eq 1234}
\end{align}

Denoting the second term of \eqref{eq 1234} by $D$, we have

\begin{align}\label{estimate 1000}
|D| &\leq \norm{\Im(|u|^2u\partial\overline{u})-\frac{1}{2}|\partial u|^2-\frac{1}{6}(|u|^6-q_0^6)}_{L^2}\norm{\partial\chi_R}_{L^2} \lesssim C(M)\norm{\partial\chi_R}_{L^2} \rightarrow 0 \text{ as } R \rightarrow \infty.
\end{align}

Integrating from $0$ to $t$ the two sides of \eqref{eq 1234} and taking $R$ to infinity, using \eqref{estimate 1000}, we have

\begin{align}
& \int_{\R}\left[\frac{1}{2}\Im(u\partial\overline{u})-\frac{1}{4}(|u|^2-q_0^2)^2\right] \, dx -\frac{q_0^2}{2}\mathop{\lim}\limits_{R \rightarrow \infty}\int_{\R}(|u|^2-q_0^2)\chi_R \, dx \\
&=  \int_{\R}\left[\frac{1}{2}\Im(u_0\partial\overline{u_0})-\frac{1}{4}(|u_0|^2-q_0^2)^2\right] \, dx -\frac{q_0^2}{2}\mathop{\lim}\limits_{R \rightarrow \infty}\int_{\R}(|u_0|^2-q_0^2)\chi_R \, dx. \label{eq zz}
\end{align}

Combined \eqref{eq zz} and \eqref{eq z}, we obtain the conservation of momentum \eqref{eq:conservation momentum special}. which completes the proof of Theorem \ref{thm conservation law special case}.

\section{Stationary solutions}

In this section, we give the proof of Theorem \ref{thm kink soliton}. We start by the following definition of stationary solutions of \eqref{eq:1}.
\begin{definition}\label{definitionofstationarysolution}
The stationary solutions of \eqref{eq:1} are functions $\phi \in X^2(\R)$ satisfying
\begin{equation}\label{eqstationary solution}
\phi_{xx}+i\phi^2\overline{\phi}_x =0.
\end{equation}
\end{definition}

\begin{proof}[Proof of Theorem \ref{thm kink soliton}.] Let $\phi$ be a non constant solution of \eqref{eqstationary solution} such that $m=\mathop{\inf}\limits_{x\in \R}|\phi(x)|>0$. From \eqref{eqstationary solution}, we have $\phi \in X^3(\R)$. Using the assumptions on $\phi$ we can write $\phi$ as
\[
\phi(x)=R(x)e^{i\theta(x)}
\]
where $R>0$ and $R,\theta \in \mathcal{C}^2(\R)$ are real-valued functions. We have 
\begin{align*}
\phi_x &= e^{i\theta}(R_x+i\theta_xR), \\
\phi_{xx}&= e^{i\theta}(R_{xx}+2iR_x\theta_x+iR\theta_{xx}-R\theta_x^2).
\end{align*}
Hence, since $\phi$ satisfies \eqref{eqstationary solution} we obtain
\begin{align*}
0&=(R_{xx}-R\theta_x^2+R^3\theta_x)+i(2R_x\theta_x+R\theta_{xx}+R^2R_x) .
\end{align*}
It is equivalent to
\begin{align}
0&=R_{xx}-R\theta_x^2+R^3\theta_x, \label{eq 1}\\
0&=2R_x\theta_x+R\theta_{xx}+R^2R_x.\label{eq 2}
\end{align}
The equation \eqref{eq 2} is equivalent to
\begin{align*}
0&=\partial_x\left(R^2\theta_x+\frac{1}{4}R^4\right).
\end{align*}
Hence there exists $B \in \R$ such that 
\begin{equation}
B=R^2\theta_x+\frac{1}{4}R^4.
\end{equation}
This implies
\begin{equation}\label{eqofthetax}
 \theta_x=\frac{B}{R^2}-\frac{R^2}{4}. 
\end{equation} 
Substituting the above equality in \eqref{eq 1} we obtain
\begin{align}
0&=R_{xx}-R\left(\frac{B}{R^2}-\frac{R^2}{4}\right)^2+R^3\left(\frac{B}{R^2}-\frac{R^2}{4}\right) \nonumber\\
&= R_{xx}-\frac{B^2}{R^3}-\frac{5R^5}{16}+\frac{3BR}{2}. \label{eqofR}
\end{align}
We prove that the set $V=\left\{x \in \R: \, R_x(x) \neq 0\right\}$ is dense in $\R$. Indeed, assume there exists $x \in \R \setminus \overline{V}$. Thus, there exists $\varepsilon$ such that $B(x,\varepsilon) \in \R \setminus \overline{V}$. It implies that for all $y \in B(x,\varepsilon)$, we have $R_x(y)=0$. Hence, $R$ is a constant function on $B(x,\varepsilon)$. By uniqueness of $C^2$ solution of \eqref{eqofR}, we have $R$ is constant function on $\R$. By \eqref{eqofthetax}, $\theta_x$ is constant. Thus, $\phi(x)$ is of form $Ce^{ikx}$, for some constants $C,K \in \R$. If $k=0$ it is a constant and if $k \neq 0$ it is not in $X^1(\R)$, which contradicts the assumption of $\phi$. From \eqref{eqofR}, we have
\begin{align*}
0&=R_x\left(R_{xx}-\frac{B^2}{R^3}-\frac{5R^5}{16}+\frac{3BR}{2}\right) \\
&=\frac{d}{dx}\left[\frac{1}{2}R_x^2+\frac{B^2}{2R^2}-\frac{5}{96}R^6+\frac{3B}{4}R^2\right]. 
\end{align*}
Hence there exists $a \in \R$ such that
\begin{align*}
a &= \frac{1}{2}R_x^2+\frac{B^2}{2R^2}-\frac{5}{96}R^6+\frac{3B}{4}R^2.
\end{align*}
It is equivalent to
\begin{align*}
0&=R_x^2R^2+B^2-\frac{5}{48}R^8+\frac{3B}{2}R^4-2aR^2,\\
&=\frac{1}{4}[(R^2)_x]^2+B^2-\frac{5}{48}R^8+\frac{3B}{2}R^4-2aR^2.
\end{align*}
Set $k=R^2$. We have
\begin{align}\label{eq 3}
0 &= \frac{1}{4}k_x^2+B^2-\frac{5}{48}k^4+\frac{3B}{2}k^2-2ak.
\end{align}
Differentiating the two sides of \eqref{eq 3} we have
\begin{align*}
0&=k_x\left(\frac{k_{xx}}{2}-\frac{5}{12}k^3+3Bk-2a\right)
\end{align*} 
On the other hand, since $k_x=2R_xR \neq 0$ for a.e $x$ in $\R$, we obtain the following equation for a.e $x$ in $\R$, hence, by continuity of $k$, it is true for all $x$ in $\R$: 
\begin{align}\label{eq 5}
0&=\frac{k_{xx}}{2}-\frac{5}{12}k^3+3Bk-2a.
\end{align}
Now, using Lemma \ref{lm 1} we have $k-2\sqrt{B} \in H^3(\R)$. Combining with \eqref{eq 5} we obtain $a=\frac{4B\sqrt{B}}{3}$. Set $h=k-2\sqrt{B}$. Then from \eqref{eq 5} $h\in H^3(\R)$ solves 
\begin{equation}\label{eq h}
\begin{cases}
0=h_{xx}-\frac{5}{6}h^3-5\sqrt{B}h^2-4Bh , \\
h>m^2-2\sqrt{B}.
\end{cases}
\end{equation} 
The equation \eqref{eq h} can be explicitly solved, and we find
\[
h=\frac{-1}{\sqrt{\frac{5}{72B}}\cosh(2\sqrt{B}x)+\frac{5}{12\sqrt{B}}}.
\]
This implies
\[
k=2\sqrt{B}+h = 2\sqrt{B}+\frac{-1}{\sqrt{\frac{5}{72B}}\cosh(2\sqrt{B}x)+\frac{5}{12\sqrt{B}}}.
\]
Furthermore, using $\theta_x=\frac{B}{k}-\frac{k}{4}$, there exists $\theta_0 \in \R$ such that 
\[
\theta(x)=\theta_0-\int_x^{\infty} \left(\frac{B}{k}-\frac{k}{4}\right)\, dy.
\]
Now, assume that $\phi$ is a solution of \eqref{eqstationary solution} such that $\phi(\infty)=0$. We prove $\phi \equiv 0$ on $\R$. Multiplying the two side of \eqref{eqstationary solution} with $\overline{\phi}$ then taking the imaginary part we obtain 
\[
\partial_x\Im(\phi_x\overline{\phi})+\frac{1}{4}
\partial_x(|\phi|^4) =0
\]
On the other hand, $\phi(\infty)=\phi_x(\infty)=0$ then on $\R$ we have
\begin{equation}\label{eq of phi}
\Im(\phi_x\overline{\phi})+\frac{1}{4}|\phi|^4=0.
\end{equation}
If there exists $y_0$ such that $\phi_x(y_0)=0$ then from \eqref{eq of phi} we have $\phi(y_0)=0$. By the uniqueness of Cauchy problem we obtain $\phi \equiv 0$ on $\R$. Otherwise, $\phi_x$ is non vanishing on $\R$. From now on, we will consider this case. Multiplying the two sides of \eqref{eqstationary solution} with $\overline{\phi_x}$ then taking the real part, we have
\begin{align*}
0&=\Re(\phi_{xx}\overline{\phi_x})-\Im(\phi^2\overline{\phi_x}^2) \\
&= \frac{1}{2}\frac{d}{dx}|\phi_x|^2-2\Re(\phi\overline{\phi_x})\Im(\phi\overline{\phi_x})\\
&= \frac{1}{2}\frac{d}{dx}|\phi_x|^2-\partial_x(|\phi|^2)\frac{1}{4}|\phi|^4\\
&=\frac{d}{dx}\left(\frac{1}{2}|\phi_x|^2-\frac{1}{12}|\phi|^6\right).
\end{align*}
It implies that
\[
|\phi_x|^2-\frac{1}{6}|\phi|^6=0.
\] 
Hence, since $\phi_x$ is non vanishing, $\phi$ is also non vanishing on $\R$. We can write $\phi=\rho e^{i\theta}$ for $\rho>0$, $\rho,\theta \in \mathcal{C}^2(\R)$. Replacing $\phi=\rho e^{i\theta}$ in \eqref{eqstationary solution} we have
\begin{align*}
0&=(-\rho\theta_x^2+\rho_{xx}+\rho^3\theta_x)+i(2\rho_x\theta_x+\rho\theta_{xx}+\rho^2\rho_x).
\end{align*}
It implies that
\begin{align}
0 &=-\rho\theta_x^2+\rho_{xx}+\rho^3\theta_x .  \label{eq a}\end{align}
Replacing $\phi=\rho e^{i\theta}$ in \eqref{eq of phi} we have 
\begin{align*}
0&=\rho^2\theta_x+\frac{1}{4}\rho^4.
\end{align*}
Then $\theta_x=\frac{-1}{4}\rho^2$, replacing this equality in \eqref{eq a} we obtain 
\[
0=\rho_{xx}-\frac{5}{16}\rho^5.
\]
Multiplying the two sides of the above equality with $\rho_x$ we obtain
\begin{equation*}
0=\rho_{xx}\rho_x-\frac{5}{16}\rho^5\rho_x= \frac{d}{dx}\left(\frac{1}{2}\rho_x^2-\frac{5}{96}\rho^6\right).
\end{equation*} 
Hence,
\[
0=\rho_x^2-\frac{5}{48}\rho^6.
\]
Moreover, $\phi$ is non vanishing on $\R$ then $\rho>0$ and then $\rho_x$ is not change sign on $\R$. If $\rho_x>0$ then since $\rho(\infty)=0$ we have $\rho<0$ on $\R$, a contradiction. Hence, $\rho_x<0$ and $\rho_x=-\sqrt{\frac{5}{48}}\rho^3$. From this we easily check that
\[
\rho^2(x)=\frac{1}{\rho(0)^2+\sqrt{\frac{5}{12}}\, x},
\] 
which implies the contradiction, for the right hand side is not a continuous function on $\R$. This complete the proof.
\end{proof}

\begin{lemma}\label{lm 1}
The following is true:
\[
k-2\sqrt{B} \in L^2(\R), \quad k \in X^3(\R).
\]
\end{lemma}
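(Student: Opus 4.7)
My plan is to establish the two claims separately. The regularity $k\in X^3(\R)$ follows directly from $\phi\in X^3(\R)$ (noted at the start of the proof of Theorem \ref{thm kink soliton}): differentiating $k=\phi\overline\phi$ up to three times, each term has at most one factor in $L^2(\R)$ (a derivative of $\phi$ or $\overline\phi$) paired with factors in $L^\infty(\R)$ (using $\phi\in L^\infty$ together with the one-dimensional embedding $H^1\hookrightarrow L^\infty$ applied to $\partial\phi,\partial^2\phi\in H^2$), so $\partial^j k\in L^2$ for $j=1,2,3$, while trivially $\|k\|_{L^\infty}\le\|\phi\|_{L^\infty}^2$.

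For $k-2\sqrt B\in L^2(\R)$ I would exploit the first-order ODE \eqref{eq 3}, i.e. $\tfrac14 k_x^2=f(k)$ with $f(k)=\tfrac{5}{48}k^4-\tfrac{3B}{2}k^2+2ak-B^2$. Since $k_x\in H^2(\R)\hookrightarrow C_0(\R)$, we have $k_x(x)\to 0$ as $|x|\to\infty$. I first argue that $k$ admits limits $L_\pm$ at $\pm\infty$: otherwise the intermediate value theorem produces values $c$ strictly between $\liminf k$ and $\limsup k$ attained along a sequence $x_n\to+\infty$, and the ODE forces $f(c)=\tfrac14 k_x(x_n)^2\to 0$ on a nondegenerate interval, contradicting $f$ being a nonzero polynomial. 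Each $L_\pm$ is then a root of $f$ and must in fact be a \emph{double} root, for otherwise a standard separation-of-variables analysis ($dk/\sqrt{f(k)}=\pm 2\,dx$ near a simple root) shows $k$ would reach $L_\pm$ in finite $x$, contradicting the fact that $k\to L_\pm$ only as $|x|\to\infty$.

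Eliminating $a$ from the system $f(L)=f'(L)=0$ reduces to $\tfrac{5}{16}L^4-\tfrac{3B}{2}L^2+B^2=0$, whose positive solutions in $L$ are $2\sqrt B$ and $2\sqrt{B/5}$, both requiring $B>0$. I discard the candidate $L=2\sqrt{B/5}$ by computing the associated factorization $f(k)=\tfrac{5}{48}(k-2\sqrt{B/5})^2(k-6\sqrt{B/5})(k+10\sqrt{B/5})$, which is strictly negative on any punctured neighborhood of $2\sqrt{B/5}$ lying below $6\sqrt{B/5}$; this contradicts $f\ge 0$ along the orbit unless $k$ is locally constant, which is itself ruled out by the density of $\{R_x\neq 0\}$ established earlier in the proof of Theorem \ref{thm kink soliton}. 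Hence $L_\pm=2\sqrt B$, and at this double root the cofactor evaluates to $\tfrac{48B}{5}>0$, giving $f(k)=B(k-2\sqrt B)^2+\mathcal O((k-2\sqrt B)^3)$; consequently $|k_x|\gtrsim|k-2\sqrt B|$ for $|x|$ large, and a Gr\"onwall comparison yields exponential decay of $|k(x)-2\sqrt B|$ at infinity, in particular $k-2\sqrt B\in L^2(\R)$.

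The principal obstacle is this classification of limits: the algebraic elimination leaves two candidates, and discarding the spurious $2\sqrt{B/5}$ relies essentially on exploiting the sign constraint $f\ge 0$ imposed by the ODE together with the non-constancy of $\phi$.
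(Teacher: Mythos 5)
Your proposal is correct, but it proves the key inclusion $k-2\sqrt{B}\in L^2(\R)$ by a genuinely different and substantially longer route than the paper. The paper's argument is essentially one line once the polar decomposition is in place: from $|\phi_x|^2=\frac{k_x^2}{4k}+k\theta_x^2\in L^1(\R)$ and the lower bound $k\geq m^2>0$ it extracts $\theta_x\in L^2(\R)$, and then the phase equation \eqref{eqofthetax} in the form $\theta_x=\frac{4B-k^2}{4k}$ immediately gives $4B-k^2\in L^2(\R)$, hence $B\geq 0$ and $k-2\sqrt B\in L^2(\R)$ (with $B=0$ excluded because it would force $R\in H^1(\R)$, contradicting $\inf|\phi|>0$). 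You instead ignore the phase entirely and run a phase--plane analysis of the first integral \eqref{eq 3}: existence of limits $L_\pm$ of $k$ at infinity, the constraint that each limit be a double root of $f$, the algebraic elimination yielding the two candidates $2\sqrt B$ and $2\sqrt{B/5}$, and the sign argument $f\geq 0$ along the orbit to discard the spurious one. I checked your factorization and the cofactor value $\tfrac{48B}{5}$ at the double root; they are correct. What your route buys is strictly more than the lemma asserts: you obtain $B>0$, the value $a=\tfrac43 B^{3/2}$, the identification of the limit $2\sqrt B$, and exponential convergence of $k$ — which together absorb the step following the lemma in the proof of Theorem \ref{thm kink soliton} (where \eqref{eq 5} is used to compute $a$) and essentially the content of the paper's final asymptotic proposition. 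The cost is that several standard but nontrivial ODE steps are only sketched (eventual strict monotonicity of $k$, the finite-arrival argument at a simple root, ruling out $k\equiv 2\sqrt B$ on an interval via uniqueness for \eqref{eq 5}), whereas the paper's argument requires none of this machinery. Your treatment of $k\in X^3(\R)$ by directly differentiating $k=\phi\overline\phi$ is simpler than the paper's (which again passes through the polar decomposition and the lower bound on $k$); the only blemish there is the claim $\partial^2\phi\in H^2$, which should read $\partial^2\phi\in H^1$ — still enough for the $L^\infty$ control you need.
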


\begin{proof}
Using $\phi \in L^{\infty}(\R)$ we obtain $k \in L^{\infty}(\R)$. On the other hand, since $\phi \in X^3(\R)$, we have $\phi_x \in L^2(\R)$, $\phi_{xx} \in L^2(\R)$ and it easy to see that
\begin{align*}
|\phi_x|^2 &=\frac{k_x^2}{4k}+k\theta_x^2 \in L^1(\R),\\
|\phi_{xx}|^2 &= \left|\frac{k_x\theta_x}{\sqrt{k}}+\theta_{xx}\sqrt{k}\right|^2+\left|\frac{k_{xx}}{2\sqrt{k}}-\sqrt{k}\theta_x^2-\frac{k_x^2}{4k\sqrt{k}}\right|^2\in L^1(\R).
\end{align*} 
This implies
\begin{align*}
\frac{k_x}{2\sqrt{k}} \in L^2(\R) & \text{ and }\sqrt{k}\theta_x \in L^2(\R) \\
\frac{k_x\theta_x}{\sqrt{k}}+\theta_{xx}\sqrt{k} \in L^2(\R) &\text{ and }\frac{k_{xx}}{2\sqrt{k}}-\sqrt{k}\theta_x^2-\frac{k_x^2}{4k\sqrt{k}} \in L^2(\R).
\end{align*}
Using $\sqrt{m}<k<\norm{k}_{L^{\infty}}$, $\theta_x=\frac{4B-k^2}{4k}\in L^{\infty}(\R)$, $k_x=2RR_x \in L^{\infty}$( \text{indeed }$|\phi_x|^2=|R_x|^2+|R\theta_x|^2 \in L^{\infty}(\R)$) we have
\begin{align*}
k_x\in L^2 &\text{ and } \theta_x \in L^2 ,\\
\theta_{xx} \in L^2 &\text{ and } k_{xx} \in L^2.
\end{align*}
By using $\theta_x=\frac{4B-k^2}{4k} \in L^2(\R)$, we have $4B-k^2 \in L^2(\R)$. Thus, $B \geq 0$ and $2\sqrt{B}-k \in L^2(\R)$. If $B=0$ then $k \in L^2(\R)$, hence, $R \in L^2(\R)$. It implies that $R \in H^1(\R)$, which contradicts the assumption of $\phi$. Thus, $B>0$. It remains to prove that $k_{xxx} \in L^2(\R)$. Indeed, from $\phi_{xxx} \in L^2(\R)$ we have
\begin{align}\label{equality of three derivative of phi fucntion}
|\phi_{xxx}|^2=|\theta_{xxx}\sqrt{k}+\mathcal{M}|^2+\left|\frac{k_{xxx}}{2\sqrt{k}}+\mathcal{N}\right|^2 \in L^1(\R)
\end{align}
where $\mathcal{M},\mathcal{N}$ are functions of $\theta,\theta_x,\theta_{xx},k,k_x,k_{xx}$. We can easily check that $\mathcal{M},\mathcal{N} \in L^2(\R)$. Hence, from \eqref{equality of three derivative of phi fucntion} and the facts that $\theta_x \in H^1(\R)$, $k \in X^2(\R)$, $k$ bounded from below we obtain $\theta_{xxx},k_{xxx} \in L^2(\R)$. This implies the desired results.
\end{proof}

From now on, we will denote $\phi_{B}$ is the stationary solution of \eqref{eqstationary solution} given by Theorem \ref{thm kink soliton} with $\theta_0=0$. We have the following asymptotic properties for $\phi_B$ at $\infty$. 
\begin{proposition}
Let $B>0$ and $\phi_B$ be kink solution of \eqref{eq:1}. Then for $x>0$, we have
\begin{align*}
|\phi_B-\sqrt{2\sqrt{B}}| &\lesssim e^{-\sqrt{B} x}.
\end{align*}
As consequence $\phi_B$ converges to $\sqrt{2\sqrt{B}}$ as $x$ tends to $\infty$ and there exists limit of $\phi_B$ as $x$ tends to $-\infty$.
\end{proposition}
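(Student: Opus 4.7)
The plan is to exploit the explicit formula $\phi_B = e^{i\theta}\sqrt{k}$ given in Theorem \ref{thm kink soliton} and split the error into the modulus part and the phase part:
\[
\phi_B(x) - \sqrt{2\sqrt{B}} = \bigl(\sqrt{k(x)} - \sqrt{2\sqrt{B}}\bigr) + \bigl(e^{i\theta(x)} - 1\bigr)\sqrt{k(x)}.
\]
Both terms can be controlled from the explicit expressions, so the work is just to extract the exponential rate.

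First, I would treat the modulus. Since
\[
k(x) - 2\sqrt{B} = -\frac{1}{\sqrt{\tfrac{5}{72B}}\cosh(2\sqrt{B}x) + \tfrac{5}{12\sqrt{B}}},
\]
the elementary bound $\cosh(2\sqrt{B}x) \geq \tfrac{1}{2}e^{2\sqrt{B}x}$ for $x \geq 0$ gives $|k(x) - 2\sqrt{B}| \lesssim e^{-2\sqrt{B}x}$. Using the identity $\sqrt{k}-\sqrt{2\sqrt{B}} = (k-2\sqrt{B})/(\sqrt{k}+\sqrt{2\sqrt{B}})$ and the fact that $k$ stays bounded away from zero (since $k \to 2\sqrt{B}>0$ as $|x|\to\infty$ and $k>0$ everywhere), I would then conclude
\[
\bigl|\sqrt{k(x)} - \sqrt{2\sqrt{B}}\bigr| \lesssim e^{-2\sqrt{B}x}.
\]

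Next, for the phase, I would factor
\[
\frac{B}{k(y)} - \frac{k(y)}{4} = \frac{4B - k(y)^2}{4k(y)} = \frac{(2\sqrt{B}-k(y))(2\sqrt{B}+k(y))}{4k(y)},
\]
so that the previous modulus estimate yields $|B/k(y)-k(y)/4| \lesssim e^{-2\sqrt{B}y}$ for $y \geq 0$. With $\theta_0=0$ in the formula for $\theta$ this produces
\[
|\theta(x)| \leq \int_x^\infty \left|\frac{B}{k(y)} - \frac{k(y)}{4}\right|\,dy \lesssim \int_x^\infty e^{-2\sqrt{B}y}\,dy \lesssim e^{-2\sqrt{B}x}.
\]
Combining this with $|e^{i\theta}-1| \leq |\theta|$ and the uniform bound on $\sqrt{k}$ gives $|(e^{i\theta(x)}-1)\sqrt{k(x)}| \lesssim e^{-2\sqrt{B}x}$.

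Adding the two estimates yields in fact the stronger bound $|\phi_B(x)-\sqrt{2\sqrt{B}}| \lesssim e^{-2\sqrt{B}x}$, which implies the stated $e^{-\sqrt{B}x}$ decay on $x>0$. The existence of a finite limit of $\phi_B$ as $x\to -\infty$ follows from the same argument on the negative half-line: $k$ is even, so $k(x)\to 2\sqrt{B}$ exponentially also as $x\to -\infty$, and $\theta(x)$ has a finite limit $\theta(-\infty)=\theta_0 - \int_{-\infty}^\infty(B/k-k/4)\,dy$ because the integrand is absolutely integrable on $\mathbb{R}$. There is no serious obstacle; the only minor care needed is to verify that the denominator in the formula for $k(x)-2\sqrt{B}$ is never zero (it is a sum of positive quantities for $B>0$) so that the pointwise bound is valid uniformly, and that $k$ is bounded below away from $0$ so that the division by $\sqrt{k}+\sqrt{2\sqrt{B}}$ and by $k(y)$ is harmless.
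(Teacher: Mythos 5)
Your proposal is correct and follows essentially the same route as the paper: split $\phi_B-\sqrt{2\sqrt{B}}$ into the modulus part $\sqrt{k}-\sqrt{2\sqrt{B}}$ and the phase part $(e^{i\theta}-1)\sqrt{k}$, bound $|k-2\sqrt{B}|\lesssim e^{-2\sqrt{B}x}$ from the explicit $\cosh$ formula, and control $|\theta(x)|$ by integrating $|B/k-k/4|\lesssim|k-2\sqrt{B}|$ on $[x,\infty)$. The only cosmetic difference is that you obtain the slightly sharper rate $e^{-2\sqrt{B}x}$ on the modulus term by writing $\sqrt{k}-\sqrt{2\sqrt{B}}=(k-2\sqrt{B})/(\sqrt{k}+\sqrt{2\sqrt{B}})$, whereas the paper settles for $e^{-\sqrt{B}x}$ there; both suffice for the stated bound.
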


\begin{proof}
We recall that
\begin{align}
\phi_B &=e^{i\theta}\sqrt{k}\; \label{eq property kink solution 1},\\
k(x)&=2\sqrt{B}+\frac{-1}{\sqrt{\frac{5}{72B}}\cosh(2\sqrt{B}x)+\frac{5}{12\sqrt{B}}} \; \label{eq property kink solution 2},\\
\theta(x) &=-\int_x^{\infty} \frac{B}{k(y)}-\frac{k(y)}{4} \; \label{eq property kink solution 3}\, dy.
\end{align}

From \eqref{eq property kink solution 2} we have 
\[
|k-2\sqrt{B}| \lesssim e^{-2\sqrt{B}x}. 
\]
Hence, for all $x\in \R$ we have
\begin{align}
|\phi_B(x)-\sqrt{2\sqrt{B}}| &\lesssim |e^{i\theta(x)}\sqrt{k(x)}-\sqrt{k(x)}|+|\sqrt{k(x)}-\sqrt{2\sqrt{B}}| \\
&\lesssim \norm{k}^{\frac{1}{2}}_{L^{\infty}}|e^{i\theta(x)}-1|+e^{-\sqrt{B}x} \label{eq lemma kink}
\end{align}
Moreover, for $x>0$, we have
\begin{align*}
|e^{i\theta(x)}-1| &\leq |\theta(x)| \leq\int_x^{\infty}\left|\frac{B}{k}-\frac{k}{4}\right|\,dx \\
&\leq \int_x^{\infty}\left|\frac{B}{k}-\frac{\sqrt{B}}{2}\right|+\left|\frac{\sqrt{B}}{2}-\frac{k}{4}\right|\,dx\\
&\lesssim \int_x^{\infty}\left|k-2\sqrt{B}\right|\,dx \lesssim \int_x^{\infty}e^{-2\sqrt{B}x}\,dx \lesssim e^{-2\sqrt{B}x}.
\end{align*}
Combining with \eqref{eq lemma kink} we obtain 
\[
|\phi_B(x)-\sqrt{2\sqrt{B}}| \lesssim e^{-\sqrt{B}x}.
\]
As consequence $\phi_B$ converges to $\sqrt{2\sqrt{B}}$ as $x$ tends to $\infty$. It is easy to check that $\theta$ converges to some constant when $x$ tends to $-\infty$, hence, there exists limit of $\phi_B$ when $x$ tends to $-\infty$. It is the desired result. 
\end{proof}

\bibliographystyle{abbrv}
\bibliography{bibliothequenonvanishingboundarycondition}

% \bibliographystyle{abbrv}
% \bibliography{biblio}

\end{document}